\documentclass[12pt]{amsart}
\usepackage{amsmath}
\usepackage{amssymb}
\usepackage{amsthm}
\usepackage[all]{xy}
\usepackage{graphicx}
\usepackage[margin=2.5cm]{geometry}
\usepackage{color}
\usepackage{hyperref}
\usepackage{verbatim}
\usepackage{pb-diagram,pb-xy}

%Sets paragraph indentation and line skipping lengths.
\setlength{\parindent}{0cm}
\setlength{\parskip}{.2cm}

%This gives a function.  3 arguments: map, the domain and the target space.

%These next commands are for dir. sum and dir. prod.     #1= objects, #2 = index, #3 = index set. 

%Unions and intersections over families of sets

%Homology

%Cohomology

%homotopy groups #1 is the space #2 is the integer.

%Symbols for Reals, Complex, Rational, integers, and quaternions
\newcommand{\R}{\ensuremath{\mathbb{R}}}

\newcommand{\Z}{\ensuremath{\mathbb{Z}}}

%mathcal

%\newtheorem{defn}{Definition}
\newtheorem{theorem}{Theorem}[section]

\newtheorem{lemma}[theorem]{Lemma}
\newtheorem{proposition}[theorem]{Proposition}

\newtheorem{corollary}[theorem]{Corollary}

%\newtheorem{Notation}{Notational Convention}[section]

%%%from BB:
\theoremstyle{definition} % makes body in upright roman (not italics)
\newtheorem{defn}{Definition}[section]
 
\newtheorem{remark}{Remark}[section] 
\newtheorem{Notation}{Notational Convention}[section]
\newtheorem{Conjecture}{Conjecture}[section]

\DeclareMathOperator*{\BDiff}{{\mathbf B}Diff}

\DeclareMathOperator*{\Int}{int}

\DeclareMathOperator*{\Diff}{Diff}

\DeclareMathOperator*{\Emb}{Emb}

\DeclareMathOperator*{\Hom}{Hom}

\DeclareMathOperator*{\Ker}{Ker}
\DeclareMathOperator*{\Coker}{Coker}
\DeclareMathOperator*{\Torsion}{Torsion}
\DeclareMathOperator*{\link}{link}
%%Boris:

\textwidth=6.6truein
\mathsurround=1pt
\textheight=8.3truein
\topmargin -10pt \headheight 10pt 

\author{Nathan Perlmutter}

\address{Department of Mathematics, University of Oregon, Eugene, OR,
  97403, USA} 
  
  \email{nperlmut@uoregon.edu}

\title[Homological Stability For Moduli Spaces of Odd Dimensional
  Manifolds]{Homological Stability For Moduli Spaces of Odd
  Dimensional Manifolds}
  
\begin{document}
\maketitle
 \begin{abstract} 
We prove a homological stability theorem for the moduli spaces of
manifolds diffeomorphic to $\#^{g}(S^{n+1}\times S^{n})$, provided $n
\geq 4$. This is an odd dimensional analogue of a recent
homological stability result of S. Galatius and O. Randal-Williams for the
moduli spaces of manifolds diffeomorphic to $\#^{g}(S^{n}\times S^{n})$
for $n \geq 3$.
 \end{abstract}
\section{Introduction} \label{Introduction} 
\subsection{Main result}
  Recently S. Galatius and O. Randal-Williams proved a homological
  stability theorem for moduli spaces of certain manifolds of dimension
  $2n$ where $n \geq 3$, see \cite{GRW 12}.
For a positive integer $g$,
denote by $V_{g,1}$ the connected sum $\#^{g}(S^{n}\times S^{n}) -
\Int(D^{2n})$ and by $\Diff(V_{g,1})^{ \partial}$ the topological
group of diffeomorphisms of $V_{g,1}$ which fix a neighborhood of the boundary pointwise,
equipped with the $C^{\infty}$-topology. There is a natural map
$$s_{g}: \BDiff(V_{g,1})^{\partial} \longrightarrow
\BDiff(V_{g+1,1})^{\partial}$$ induced by extending a diffeomorphism
identically over an attached handle.  S. Galatius and
O. Randal-Williams proved that the map on homology induced by $s_{g}$
$$H_{k}(\BDiff(V_{g,1})^{\partial};\Z) \;
  \longrightarrow \; H_{k}(\BDiff(V_{g+1,1})^{\partial};\Z),$$ 
  is an isomorphism provided that $k
\leq \frac{1}{2}(g-4)$, see \cite[Theorem 1.2]{GRW 12}.

In this paper we prove an analogous result for diffeomorphism groups of similar odd-dimensional manifolds
of dimension at least nine.  

For a positive integer $g$, denote by $W_{g, 1}$ the
$(2n+1)$-dimensional manifold with boundary $\#^{g}(S^{n+1}\times
S^{n}) - \Int(D^{2n+1})$. Like with the case above, 
there is a natural
map
\begin{equation} \label{eq: stabilization map intro} 
s_{g}: \BDiff(W_{g,1})^{\partial} \longrightarrow
\BDiff(W_{g+1,1})^{\partial} 
\end{equation} 
induced by extending a diffeomorphism identically over an attached
copy of the cobordism $W_{1, 1} - \Int(D^{2n+1})$. This map is defined rigorously in Section \ref{The Moduli Spaces}. We can now state our main theorem.
\begin{theorem} \label{thm: Main Theorem} 
For $n \geq 4$ the map on homology induced by {\rm
    (\ref{eq: stabilization map intro})}
$$(s_{g})_{*}: H_{k}(\BDiff(W_{g,1})^{\partial}; \Z) \longrightarrow
  H_{k}(\BDiff(W_{g+1,1})^{\partial}; \Z),$$ is an isomorphism
  provided that $k \leq \frac{1}{2}(g-3)$.
\end{theorem}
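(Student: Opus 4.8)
The plan is to follow the strategy Galatius and Randal-Williams used in the even-dimensional case, as axiomatized for families of groups by Randal-Williams and Wahl: resolve $\BDiff(W_{g,1})^{\p}$ by a highly-connected augmented semi-simplicial space and run a spectral-sequence induction on the genus $g$ and the homological degree $k$. First I would pass from $\BDiff(W_{g,1})^{\p}$ to a weakly equivalent \emph{moduli space of submanifolds with tangential structure} $\mathcal{M}_{\theta}(W_{g,1})$: the space of $(2n{+}1)$-dimensional submanifolds $M \subset (-\infty,0]\times \R^{\infty}$ carrying a $\theta$-structure, for a suitably chosen $\theta$ (so that a $\theta$-structure records exactly the framing and normal-bundle data used below; e.g.\ one related to the $n$-connected cover of $\BO(2n{+}1)$), with standard boundary $\p M \cong \p W_{g,1}$ and $M$ diffeomorphic rel $\p$ to $W_{g,1}$. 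Under this translation $s_g$ becomes the operation of gluing onto a fixed collar of $\p M$ a standard copy of the cobordism $K := (S^{n+1}\times S^{n}) - \Int(D^{2n+1}) - \Int(D^{2n+1})$, together with its standard $\theta$-structure; the statement for plain $\BDiff$ is then deduced from the $\theta$-structured version as in the even-dimensional case.

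Next I would build the resolution. For $M \in \mathcal{M}_{\theta}(W_{g,1})$ let $X_p(M)$ be the space of $(p{+}1)$-tuples of pairwise disjoint $\theta$-embeddings of $K$ into a half-collar of a fixed disc in $\p M$, meeting $\p M$ in a standard configuration, and such that cutting $M$ along any sub-collection yields a manifold $\theta$-diffeomorphic rel boundary to the appropriate $W_{g',1}$. These assemble into an augmented semi-simplicial space $\epsilon\colon \|X_{\bullet}\| \to \mathcal{M}_{\theta}(W_{g,1})$, and the proof then rests on two inputs. \textbf{(i) Connectivity of the resolution:} $\|X_{\bullet}(W_{g,1})\|$ is $\tfrac{1}{2}(g-3)$-connected. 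By contractibility of spaces of tubular neighbourhoods and collars, together with parametrized isotopy extension, this reduces first to the high connectivity of a simplicial complex of disjointly embedded \emph{cores} of copies of $K$, and then to that of a ``complex of structured middle spheres'': vertices are embedded copies of $S^{n+1}\times D^{n}$ (the $(n{+}1)$-dimensional half of $K$, with its trivial normal bundle and compatible $\theta$-structure) equipped with an algebraically dual embedded $D^{n+1}\times S^{n}$, all hitting $\p M$ standardly, and $q$-simplices are $(q{+}1)$-tuples of these that are pairwise disjoint and whose classes form a ``hyperbolic system'' in $H_{n+1}(M)\oplus H_n(M)$. The connectivity of this complex is the geometric core of the argument and is proved by surgery: excess intersection points among the embedded spheres are removed by the Whitney trick, and a partial hyperbolic system is completed to a full one by realizing the missing generators as disjoint embedded spheres, using that the surgered manifold is again $\theta$-diffeomorphic to some $W_{g',1}$. \textbf{(ii) Cancellation:} cutting along a $p$-simplex defines a map $X_p(W_{g,1}) \to \mathcal{M}_{\theta}(W_{g-p-1,1})$ which, by parametrized surgery and the high connectivity of the space of $\theta$-embeddings of $K$ into a collar with prescribed boundary, is a weak equivalence, compatibly with the face maps, the augmentation, and the maps $s_g$.

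Finally I would feed (i) and (ii) into the spectral sequence of the augmented semi-simplicial space $\|X_{\bullet}(W_{g,1})\| \to \mathcal{M}_{\theta}(W_{g,1})$ and run the standard double induction on $g$ and $k$: the cancellation statement identifies the $E^{1}$-columns with $H_{*}(\mathcal{M}_{\theta}(W_{g-p-1,1}))$ and the differentials with iterated stabilization maps, while the $\tfrac{1}{2}(g-3)$-connectivity of $\|X_{\bullet}\|$ forces those differentials to be epi, resp.\ mono, in the appropriate range; chasing this precisely as in the Randal-Williams--Wahl framework yields that $(s_g)_*$ is an isomorphism for $k \le \tfrac{1}{2}(g-3)$, and an epimorphism one degree further. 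The formal part of the induction does not see the parity of the dimension, so the genuinely new --- and hardest --- step is (i). Two features make it more delicate than its even-dimensional counterpart. The Whitney trick must be carried out in $M^{2n+1}$ against submanifolds of two \emph{distinct} middle dimensions $n$ and $n{+}1$ rather than two copies of a single middle dimension; this is where $n \ge 4$, i.e.\ $\dim M \ge 9$, is needed, so that the Whitney $2$-discs stay embedded in general position, the fundamental-group classes they create can be killed by further surgery, and normal bundles can be corrected, all simultaneously and in $1$-parameter families. And the relevant linear algebra is not a (skew-)symmetric intersection form but the hyperbolic $\Z$-module carrying the perfect pairing $H_{n+1}(W_{g,1})\otimes H_n(W_{g,1}) \to \Z$, whose automorphism group is of $\mathrm{GL}_g(\Z)$-type; the combinatorics of the resolution --- which generators can be cancelled against which --- must be set up to respect that asymmetry.
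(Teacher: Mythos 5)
Your overall architecture --- a moduli space of submanifolds, an augmented semi-simplicial resolution by disjoint embedded copies of the elementary cobordism, and the standard spectral-sequence induction on $g$ and $k$ --- is exactly the paper's, and your formal steps (ii) and the final induction match Sections 2, 4 and 6. (One cosmetic difference: the paper works with plain $\BDiff(W_{g,1})^{\p}$ throughout and never introduces tangential structures; the $\theta$-structure layer you propose is unnecessary here.) However, your step (i), which you correctly identify as the heart of the matter, has two genuine gaps as written.

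First, you have misidentified the algebraic structure governing the complex of embedded handles. It is not just the hyperbolic module for the perfect pairing $H_{n+1}(W_{g,1})\otimes H_{n}(W_{g,1})\to\Z$: one has $\pi_{n+1}(W_{g,1})\cong\Z^{\oplus g}\oplus(\Z/2)^{\oplus g}$, and the complex must also record the $\pi_{n+1}(S^{n})$-valued intersection form $\lambda_{n+1,n+1}$ on $\pi_{n+1}$ and Wall's normal-bundle invariant $\alpha_{n+1}$, which satisfy the summation formula $\alpha_{n+1}(a+b)=\alpha_{n+1}(a)+\alpha_{n+1}(b)+\p\lambda_{n+1,n+1}(a,b)$. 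These are not bookkeeping: two embedded $(n+1)$-spheres in a $(2n+1)$-manifold meet generically in a $1$-manifold, not in points, so the ordinary Whitney trick does not apply to them; one needs Wells's generalized Whitney trick, whose obstruction is precisely $\lambda_{n+1,n+1}\in\pi_{n+1}(S^{n})\cong\Z/2$, and one needs $\alpha_{n+1}=0$ to get the trivialized normal bundles your ``structured middle spheres'' require. Your discussion of the Whitney trick only addresses the $\lambda_{n,n+1}$ (point) intersections; the self-intersections above the middle dimension are the genuinely odd-dimensional difficulty and are absent from your argument.

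Second, the high connectivity of the resulting complex does not follow from ``surgery'' plus completing partial hyperbolic systems: that sketch begs the question of why the \emph{algebraic} complex of such systems is highly connected. The paper devotes all of Section 5 to this, formalizing the data above as a ``Wall pairing'', building a poset model, and running Charney-style poset inductions whose base case is Van der Kallen's theorem that the poset of unimodular sequences in $\Z^{g}$ is $(g-3)$-connected. Some such homological-stability input of $GL_{g}(\Z)$-type (or an equivalent connectivity theorem) is indispensable and is missing from your proposal; without it, step (i) is an assertion rather than a proof.
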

\subsection{Ideas behind the proof}
Our methods are similar to those used in \cite{GRW 12}. Namely, we
construct a highly connected semi-simplicial space
$K_{\bullet}(W_{g,1})$ which admits a transitive action of the
topological group $\Diff(W_{g,1})^{\partial}$. Roughly, the
$p$-simplices of $K_{\bullet}(W_{g,1})$ are given by ordered lists of
$(p+1)$-many pair-wise disjoint embeddings of $W_{1, 1}$ into $W_{g,1}$
with a pre-prescribed boundary condition. This semi-simplicial
space is almost identical to the one constructed in \cite{GRW 12} and
we use many of the same formal results from \cite{GRW 12} regarding
this semi-simplicial space.  For instance, after proving that the
geometric realization $|K_{\bullet}(W_{g,1})|$ is highly connected,
the proof of Theorem \ref{thm: Main Theorem} goes through similarly to
the proof of \cite[Theorem 1.2]{GRW 12}.

However, in the proof of high-connectivity of
$|K_{\bullet}(W_{g,1})|$, our methods differ from those of \cite{GRW
  12}.  Our proof requires much of the theory regarding the
diffeomorphism classification of highly connected manifolds developed
by C.T.C. Wall in \cite{Wa 63} and \cite{Wa 66}, specialized to the
case of $(n-1)$-connected, $(2n+1)$-dimensional manifolds. 

In more detail, 
let $M$ be an $(n-1)$-connected, $(2n+1)$-dimensional manifold. 
The homotopy groups $\pi_{n}(W_{g,1})$ and $\pi_{n+1}(W_{g,1})$ come equipped with
the intersection parings
$$
\begin{array}{c}
\lambda_{n, n+1}: \pi_{n}(M)\times \pi_{n+1}(M)
\longrightarrow \pi_{n}(S^{n}), 
\\
\\
\lambda_{n+1, n+1}: \pi_{n+1}(M)\times \pi_{n+1}(M)
\longrightarrow \pi_{n+1}(S^{n}),
\end{array}
$$
and maps
$$
\alpha_{i}: \pi_{i}(M) \longrightarrow \pi_{i-1}(SO_{2n+1-i}) \quad \quad \text{for \; $i = n, \; n+1$}
$$
defined by sending a class $x \in \pi_{i}(M)$ to the
class in $\pi_{i-1}(SO_{2n+1-i})$ which classifies the normal
bundle of an embedding which represents $x$. (Indeed,
  for $n \geq 4$, the connectivity assumption on $M$ implies that all such elements $x \in \pi_{i}(M)$ for $i = n, n+1$ are represented by embeddings, see \cite{Ha 62}).  

C.T.C. Wall shows that $\lambda_{n+1, n+1}$
and $\alpha_{n+1}$ are related by the summation formula,
$$\alpha_{n+1}(a + b) \; = \; \alpha_{n+1}(a) \; + \; \alpha_{n+1}(b)
\; + \; \partial(\lambda_{n+1, n+1}(a, b)) \quad \text{for all $a, b
  \in \pi_{n+1}(M)$}
$$
where $\partial: \pi_{n+1}(S^{n}) \rightarrow \pi_{n}(SO_{n})$ is the
boundary map in the long-exact sequence associated to the fibre
sequence, $SO_{n} \rightarrow SO_{n+1} \rightarrow S^{n}$, see \cite{Wa 63}. 

For the case that $M = W_{g,1}$, the map $\alpha_{n}: \pi_{n}(W_{g,1}) \rightarrow \pi_{n-1}(SO_{n+1})$ is identically zero and 
the algebraic structure given by
\begin{equation} \label{eq: standard wall pairing} 
{\mathbb W}_{g}:= 
(\pi_{n}(W_{g,1}), \; \pi_{n+1}(W_{g,1}), \; \lambda_{n, n+1}, \; \lambda_{n+1, n+1}, \;  \alpha_{n+1}) 
\end{equation}
has a particularly nice form. 
In this case we have isomorphisms
$$
\pi_{n}(W_{g,1})
\cong \Z^{\oplus g}, \ \ \ 
\mbox{and} \ \ \ \pi_{n+1}(W_{g, 1}) \cong \Z^{\oplus
  g}\oplus (\Z/2)^{\oplus g}.
$$ 
Furthermore, for the groups $\pi_{n}(W_{g,1})$ and $\pi_{n+1}(W_{g,1})$
there exist bases 
$$(x_{1}, \dots, x_{g}) \quad \text{and} \quad (y_{1}, \dots, y_{g}, z_{1}, \dots, z_{g}),$$
with 
$(z_{1}, \dots, z_{g})$ a basis for the $\Z/2$-component of $\pi_{n+1}(W_{g,1})$, such that for $i, j \in \{1, \dots, g\}$, 
$$\alpha_{n+1}(y_{i}) = \alpha_{n+1}(z_{i}) = 0, \quad \lambda_{n,
  n+1}(x_{i}, y_{j}) = \delta_{i, j}\cdot\iota_{n}, \quad
\lambda_{n+1, n+1}(y_{i}, z_{j}) = \delta_{i, j}\cdot\rho_{n+1},$$
where $\iota_{n} \in \pi_{n}(S^{n})$ and $\rho_{n+1} \in
\pi_{n+1}(S^{n})$ are the standard generators. We call the algebraic
structure given by the $5$-tuple
${\mathbb W}_{g}$ from (\ref{eq: standard wall
  pairing}) a \textit{Wall pairing of rank $g$.} In order to prove
that the space $|K_{\bullet}(W_{g,1})|$ is highly connected we study
the action of $\Diff(W_{g,1})^{\partial}$ on the Wall
  pairing ${\mathbb W}_{g}$.

We construct a simplicial complex $K^{\pi}(W_{g,1})$
whose $p$-simplices are given by sets of $(p+1)$-many
pair-wise orthogonal (with respect to both $\lambda_{n, n+1}$ and
  $\lambda_{n+1, n+1}$) embeddings ${\mathbb W}_{1}\hookrightarrow
  {\mathbb W}_{g}$ of the Wall pairings, mimicking embeddings of the
  manifolds $W_{1,1}\hookrightarrow W_{g,1}$.

In this sense, the complex $K^{\pi}(W_{g,1})$ is an algebraic version
of the semi-simplicial space $K_{\bullet}(W_{g,1})$.  In Section
\ref{section: The Intersection Complex} we prove that the geometric
realization $|K^{\pi}(W_{g,1})|$ is $\frac{1}{2}(g-3)$-connected. The
connectivity of $|K^{\pi}(W_{g,1})|$ allows us to establish a lower
bound for the connectivity of $|K_{\bullet}(W_{g,1})|$.  At this point we use 
techniques developed by R. Charney \cite{Ch 87, Ch 84} and a theorem of W. Van der Kallen \cite{Ka 80}, see Theorem \ref{theorem: unimodular connectivity}.  In
particular, the proof that the geometric realization 
$|K^{\pi}(W_{g,1})|$ is $\frac{1}{2}(g-3)$-connected is similar to the
proof of Theorem 3.2 from \cite{Ch 87}.

One of the main geometric difficulties encountered is that we must
deal with intersections of embedded sub-manifolds above the middle
dimension.  In the case when $n \geq 4$, i.e. when $\dim(W_{g,1}) \geq
9$, we are in the right dimensional and connectivity range to apply
Haeffliger's results from \cite{Ha 62} to identify the homotopy group
$\pi_{n+1}(W_{g,1})$ with the set of isotopy classes of embedded
spheres. This allows us access to the machinery developed by
C.T.C. Wall in \cite{Wa 63} to handle these intersections of
sub-manifolds above the middle dimension.

For the case $n < 4$ not all classes of
$\pi_{n+1}(W_{g,1})$ are represented by embeddings. More care is
required to compute the intersections and self-intersections of the
classes of $\pi_{n+1}(W_{g,1})$. As a result, for $n=2,3$, the
relevant complex $K^{\pi}(W_{g,1})$ has a different structure than
what is described and studied in this paper.  However, the author
believes that Theorem \ref{thm: Main Theorem} holds also if $n=2,3$.
\subsection{A Conjecture} \label{A Conjecture} 
The maps $\lambda_{n, n+1}$, $\lambda_{n+1, n+1}$, $\alpha_{n}$, and
$\alpha_{n+1}$ described in the previous section are defined on the
homotopy groups of any $(n-1)$-connected, $(2n+1)$-dimensional
manifold. It is then natural to use this structure to study the
homology of $\BDiff(M)^{\partial}$ where $M$ is an arbitrary
$(n-1)$-connected, $(2n+1)$-dimensional manifold with boundary
diffeomorphic to $S^{2n}$. In particular if $M$ is now a closed,
$(n-1)$-connected, $(2n+1)$-dimensional manifold, we consider a map
$$s_{g}: \BDiff(M\# W_{g,1})^{\partial} \longrightarrow  \BDiff(M\# W_{g+1, 1})^{\partial}$$
defined in a way similar to the map from (\ref{eq: stabilization map intro}). 
\begin{Conjecture} Let $M$ be a closed, $(n-1)$-connected, $(2n+1)$-dimensional manifold with $n \geq 4$. Then the induced map on homology, 
  $$(s_{g})_{*}: H_{k}(\BDiff(M\# W_{g,1})^{\partial}; \Z)
  \longrightarrow H_{k}(\BDiff(M\# W_{g+1, 1})^{\partial}; \Z)$$ is an
  isomorphism provided that $k \leq \frac{1}{2}(g - 3)$.
\end{Conjecture}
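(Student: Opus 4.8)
The plan is to run the entire argument behind Theorem~\ref{thm: Main Theorem} with $W_{g,1}$ replaced throughout by $M\# W_{g,1} := M_{0}\,\natural\,W_{g,1}$, where $M_{0} = M \setminus \Int(D^{2n+1})$ and $\natural$ is boundary connected sum, so that $M\#W_{g,1}$ is a $(2n+1)$-manifold with boundary $S^{2n}$. First I would form the semi-simplicial space $K_{\bullet}(M\#W_{g,1})$ whose $p$-simplices are ordered $(p+1)$-tuples of pairwise disjoint embeddings of $W_{1,1}$ into $M\#W_{g,1}$ satisfying the standing boundary condition, equip it with the evident $\Diff(M\#W_{g,1})^{\partial}$-action, and define $s_{g}$ exactly as in Section~\ref{The Moduli Spaces}. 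The formal part of the argument --- the spectral-sequence and resolution machinery that deduces ``$(s_{g})_{*}$ is an isomorphism for $k \leq \frac{1}{2}(g-3)$'' from a $\frac{1}{2}(g-3)$-connectivity statement for $|K_{\bullet}(M\#W_{g,1})|$ --- is insensitive to the choice of underlying manifold and carries over verbatim from \cite{GRW 12} and from the body of this paper. Thus the conjecture reduces to showing that $|K_{\bullet}(M\#W_{g,1})|$ is $\frac{1}{2}(g-3)$-connected.

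As in this paper I would reduce that connectivity statement to an algebraic one. For $n \geq 4$, Haefliger's theorems \cite{Ha 62} identify the relevant classes of $\pi_{n}$ and $\pi_{n+1}$ with isotopy classes of embedded spheres, and Wall's analysis \cite{Wa 63} of their intersection data lets one build an algebraic complex $K^{\pi}(M\#W_{g,1})$, with $p$-simplices the $(p+1)$-tuples of pairwise orthogonal embeddings ${\mathbb W}_{1}\hookrightarrow {\mathbb W}(M\#W_{g,1})$ of Wall pairings, together with a map $|K_{\bullet}(M\#W_{g,1})| \to |K^{\pi}(M\#W_{g,1})|$ whose effect on connectivity is controlled exactly as in the body of the paper --- the fibres over a simplex being governed by local surgery that does not see the ambient manifold beyond its connectivity. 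Since the Wall pairing is additive under boundary connected sum, ${\mathbb W}(M\#W_{g,1})$ is the orthogonal direct sum ${\mathbb W}(M_{0}) \oplus {\mathbb W}_{g}$ of a \emph{fixed} Wall pairing ${\mathbb W}(M_{0})$ of finite rank, determined by $M$, with the standard rank-$g$ pairing ${\mathbb W}_{g}$ of~(\ref{eq: standard wall pairing}). For general $M$ the map $\alpha_{n}$ on ${\mathbb W}(M_{0})$ need not vanish and $\pi_{n}(M_{0})$ need not be free, so ${\mathbb W}(M_{0})$ is genuinely more complicated than the standard pairing; but it is bounded, while ${\mathbb W}_{g}$ supplies an arbitrarily large ``hyperbolic plus $\Z/2$'' summand.

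Finally I would prove that $|K^{\pi}(M\#W_{g,1})|$ is $\frac{1}{2}(g-3)$-connected by the same Charney-style induction used in Section~\ref{section: The Intersection Complex} for $|K^{\pi}(W_{g,1})|$, modelled on \cite[Theorem~3.2]{Ch 87}: one organises the orthogonal partial frames into a poset and invokes Van der Kallen's connectivity theorem (Theorem~\ref{theorem: unimodular connectivity}). \textbf{The main obstacle}, and the reason this remains conjectural, is verifying the hypotheses of that machinery in the presence of the fixed summand ${\mathbb W}(M_{0})$: one must check that the set of ``unimodular'' ${\mathbb W}_{1}$-subpairings of ${\mathbb W}(M_{0})\oplus{\mathbb W}_{g}$ on which $\mathrm{Aut}({\mathbb W}(M_{0})\oplus{\mathbb W}_{g})$ acts satisfies Van der Kallen's axioms --- in particular transitivity on unimodular sequences of a fixed length, together with the chain conditions on the associated poset --- and one must track how the bounded piece ${\mathbb W}(M_{0})$ perturbs the available splittings and the stable-rank bound controlling the connectivity range. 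A secondary point to handle is that, in contrast with the case $M\#W_{g,1}=W_{g,1}$, the group $\Diff(M\#W_{g,1})^{\partial}$ need not act transitively on the $p$-simplices of $K_{\bullet}(M\#W_{g,1})$; this is dealt with by the standard device of replacing the transitivity input with an analysis of the action on path components, which should cause no further trouble once the connectivity of $|K_{\bullet}(M\#W_{g,1})|$ is in hand.
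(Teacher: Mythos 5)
The statement you are addressing is stated in the paper as a \emph{Conjecture}: the author gives no proof of it, and your proposal does not supply one either. What you have written is a strategy outline, and you say so yourself --- you explicitly label the verification of the connectivity of the algebraic complex $K^{\pi}(M\# W_{g,1})$ in the presence of the fixed summand ${\mathbb W}(M_{0})$ as ``the main obstacle, and the reason this remains conjectural.'' That obstacle is precisely the mathematical content of the conjecture. Your outline in fact closely mirrors the author's own (unpublished, commented-out) remark following the conjecture: the structure $(\pi_{n}(M\# W_{g,1}), \pi_{n+1}(M\# W_{g,1}), \lambda_{n,n+1}, \lambda_{n+1,n+1}, \alpha_{n}, \alpha_{n+1})$ is no longer a Wall pairing in the sense of Definition \ref{defn: wall pairing} --- $\alpha_{n}$ need not vanish, $\pi_{n}(M_{0})$ need not be free, and there need not exist bases on which $\alpha_{n}$ and $\alpha_{n+1}$ vanish --- but it contains a rank-$g$ Wall pairing from the $W_{g,1}$ factor, and the expected route is to adapt the argument of Section \ref{section: The Intersection Complex} to show the connectivity of the corresponding complex grows with $g$. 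Identifying the correct route is not the same as traversing it.

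To be concrete about what is missing: the entire inductive engine of Section \ref{section: The Intersection Complex} depends on structural facts about Wall pairings that fail or must be reproved in the new setting. Lemma \ref{lemma: compliment module} (orthogonal complements of unimodular subpairings are again Wall pairings of the expected rank) is what makes the links $L(X,Y)_{(v,w)}$ amenable to induction; with a fixed summand ${\mathbb W}(M_{0})$ carrying nontrivial $\alpha_{n}$ and possible torsion in $\pi_{n}$, both the statement and the proof of the complement lemma, the uniqueness of the map $\rho$ of Proposition \ref{prop: uniqueness of rho}, and the applicability of Van der Kallen's theorem (which as invoked in Theorem \ref{theorem: unimodular connectivity} concerns unimodular sequences in a \emph{free} $\Z$-module) all require genuine new arguments, not just bookkeeping. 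Likewise the geometric step (Theorem \ref{lemma: highly connected map}) uses that vertices of $K^{\pi}$ lift to embedded plumbings of spheres with trivial normal bundles, which is exactly where a nonvanishing $\alpha_{n}$ interferes. Until these points are settled, the statement remains a conjecture, and your text should be presented as a research plan rather than a proof.
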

\begin{comment}
One can define complexes, $K_{\bullet}(M\# W_{g,1})$ and $K^{\pi}(M\#
W_{g,1})$ analogous to the ones $K_{\bullet}(W_{g,1})$ and
$K^{\pi}(W_{g,1})$ used in the proof of Theorem \ref{thm: Main
  Theorem}. The complex $K^{\pi}(M\# W_{g,1})$ must be defined to
incorporate the map $\alpha_{n}$, since now $\alpha_{n}$ may not be
identically zero on $\pi_{n}(M\# W_{g,1})$. Furthermore, one may not
be able to find bases
$$(x_{1}, \dots, x_{k}) \quad \text{and} \quad (y_{1}, \dots, y_{k}, z_{1}, \dots, z_{k})$$
of  $\pi_{n}(M\# W_{g,1})$ and $\pi_{n+1}(M\# W_{g,1})$ on which the maps $\alpha_{n}$ and $\alpha_{n+1}$ vanish. Thus, 
the relevant algebraic structure
\begin{equation} \label{eq: alg structure} (\pi_{n}(M\# W_{g,1}), \; \pi_{n+1}(M\# W_{g,1}), \; \lambda_{n, n+1}, \; \lambda_{n+1, n+1}, \; \alpha_{n}, \; \alpha_{n+1}),
\end{equation}
will differ from that of the \textit{Wall pairing} introduced in (\ref{eq: standard wall pairing}). However, (\ref{eq: alg structure}) it does contain a \textit{Wall pairing} of rank $g$ coming from the connected sum factor of $W_{g,1}$. It seems likely that one could modify the proof of the high-connectivity of $K^{\pi}(W_{g,1})$ (see Section \ref{section: The Intersection Complex}) to obtain a proof that the degree of connectivity of the  complex $K^{\pi}(M\# W_{g,1})$ increases without bound as $g \rightarrow \infty$. It seems that this would be the main technical step required to prove the conjecture. 
\end{comment}
\subsection{Acknowledgements} The author would like to thank Boris Botvinnik for suggesting this particular problem and for numerous helpful discussions on the subject of this paper. 
\newcommand{\M}[1]{\ensuremath{\mathcal{M}_{#1}}}
\section{Basic Definitions} \label{Basic Definitions}
 \subsection{The Moduli Space} \label{The Moduli Spaces} 
Recall from the introduction the manifold 
 $$W_{g, 1} := \#^{g}(S^{n+1}\times S^{n}) - \Int(D^{2n+1}).$$ 
We construct a model for the classifying space $\BDiff(W_{g,1})^{\partial}$. 
Fix a collar embedding,
 $$h: [0, \infty)\times S^{2n} \longrightarrow W_{g,1}$$
 which maps $\{0\}\times S^{2n}$ to $\partial W_{g,1}$. We then
fix a smooth embedding $\phi: S^{2n} \longrightarrow \R^{\infty}$ and denote by $S$ the image of $\phi$ in $\R^{\infty}$.
\begin{defn} Let $\mathcal{M}_{g}$ denote the set of compact $(2n+1)$-dimensional submanifolds 
 $$W \subset [0, \infty)\times \R^{\infty}$$ such that $\partial W =
    \{0\}\times S$, and $[0, \epsilon)\times S \subset W$ for some
      $\epsilon > 0$, and such that $W$ is diffeomorphic relative to
      its boundary to the manifold $W_{g,1}$.

The space $\mathcal{M}_{g}$ is topologized as a quotient of the space of smooth embeddings 
$$\varphi: W_{g,1} \longrightarrow [0, \infty)\times \R^{\infty}$$
for which there exists an $\epsilon > 0$ such that
$$\varphi(h(t, x)) \; = \; (t, \phi(x)) \quad \text{for all $(t, x) \in [0, \epsilon)\times S^{2n}$.}$$ 
In this quotient, two embeddings are identified if they have the same image. 
\end{defn}
We denote by $\Emb(W_{g,1}, [0,1]\times\R^{\infty})^{\partial}$ the
space of embeddings $W_{g,1} \hookrightarrow [0, \infty)\times
  \R^{\infty}$, with the boundary behavior described in the previous
  definition, equipped with the $C^{\infty}$-topology. The group
  $\Diff(W_{g,1})^{\partial}$ of diffeomorphisms of $W_{g,1}$ which
  fix a neighborhood of the boundary pointwise, acts freely (and smoothly) on the space
  $\Emb(W_{g,1}, [0,1]\times\R^{\infty})^{\partial}$ by precomposing
  embeddings by diffeomorphisms.  It is easy to see that the orbit
  space of this group action is homeomorphic to $\mathcal{M}_{g}$.
  The quotient map
$$\Emb(W_{g,1}, [0,1]\times\R^{\infty})^{\partial} \longrightarrow \;  \frac{\Emb(W_{g,1}, [0,1]\times\R^{\infty})^{\partial}}{\Diff(W_{g,1})^{\partial}} \;  \cong \; \mathcal{M}_{g}$$
is a locally trivial fibre-bundle (see \cite{BF 81}), and since the space $\Emb(W_{g,1}, [0,1]\times\R^{\infty})^{\partial}$ is weakly contractible, it follows that there is a weak homotopy equivalence,
\begin{equation} \label{eq: weak equivalence moduli} \BDiff(W_{g,1})^{\partial} \; \sim \; \mathcal{M}_{g}. \end{equation}
The space $\mathcal{M}_{g}$ is sometimes referred to as the \textit{moduli space of manifolds diffeomorphic to $W_{g,1}$.}

Denote by $W_{1, 2}$ the manifold obtained by deleting the interior of a $(2n+1)$-dimensional disk from the interior of $W_{1,1}$, and then denote  by $S^{2n}_{0}$ and $S^{2n}_{1}$, the two boundary components of $W_{1, 2}$, both of which are diffeomorphic to the sphere $S^{2n}$. 
We pick once and for all a collared embedding 
$\theta: W_{1, 2} \hookrightarrow [0, 1]\times \R^{\infty}$, such that 
$$\theta(S_{0}^{2n}) = \{0\}\times S \quad  \text{and}  \quad \theta(S_{1}^{2n}) = \{1\}\times S,$$
where $S \subset \R^{\infty}$ is the submanifold that was used in the definition of $\mathcal{M}_{g}$. 
Using this embedding, we define a map 
$\mathcal{M}_{g} \longrightarrow \mathcal{M}_{g+1}$
by the formula,
$$W \; \mapsto \; \theta(W_{1, 2}) \cup (W + e_{1})  \quad \text{for $W \in \mathcal{M}_{g}$}$$ 
where $W + e_{1}$ denotes translation of the submanifold $W$ by one unit in the first coordinate. Since the ambient space is infinite dimensional, all such embeddings of $W_{1, 2}$ into $[0, 1]\times \R^{\infty}$ are isotopic, thus we have a well defined homotopy class of maps
\begin{equation} \label{eq: stab map}  s_{g}: \mathcal{M}_{g} \longrightarrow \mathcal{M}_{g+1}. \end{equation}
Throughout, we will refer to this map as the \textit{stabilization map}. Using the weak homotopy equivalence from (\ref{eq: weak equivalence moduli}), this induces the map
$$H_{*}(\BDiff(W_{g,1})^{\partial}; \Z) \longrightarrow
  H_{*}(\BDiff(W_{g+1,1})^{\partial}; \Z),$$
  from the statement of our main result, Theorem \ref{thm: Main Theorem}.

\begin{figure} \label{fig: Stabilization}
\begin{picture}(0, 0)
\put(50, 10){{\small $W$}}
\put(250, 10){{\small $\theta(W_{1, 2})\cup (W + e_{1})$}}
\end{picture}
\includegraphics[scale=.25]{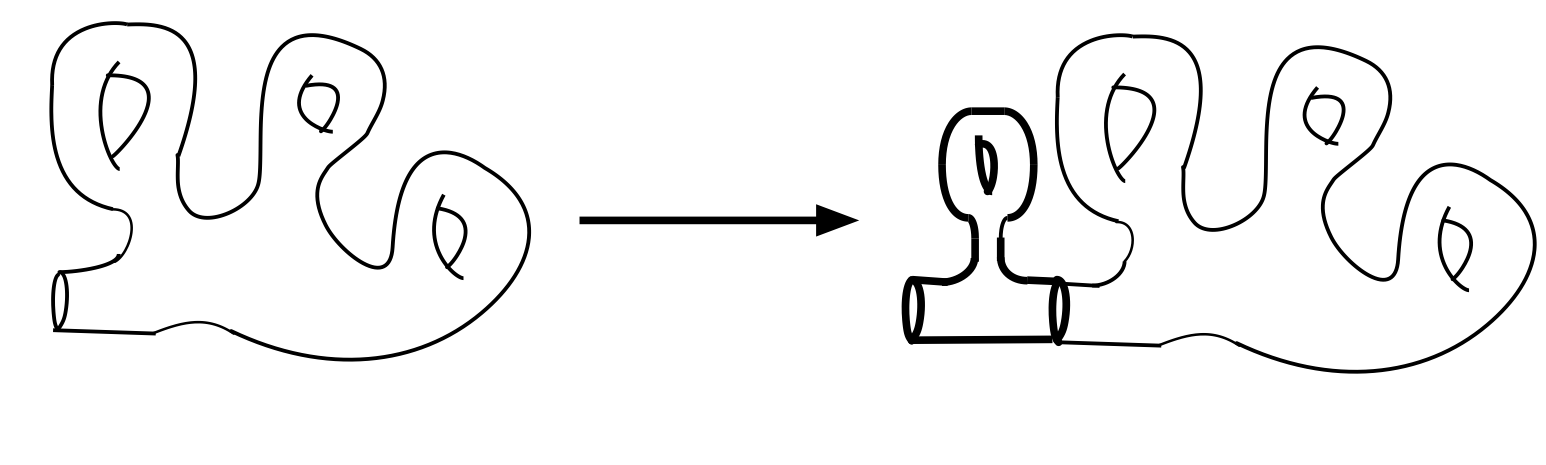}
\caption{The above figure depicts the stabilization map. The submanifold $W \subset [0, \infty)\times \R^{\infty}$ is diffeomorphic to $W_{3,1}$. The stabilization map sends $W$ to the submanifold $\theta(W_{1, 2})\cup (W + e_{1}) \; \subset \; [0, \infty)\times \R^{\infty}$ which is diffeomorphic to $W_{4, 1}$.}
\end{figure}

\subsection{The Complex of Embedded Handles} 
\label{The Complex of Embedded Handles}
We now construct a semi-simplicial space similar to the one
constructed in \cite[Definition 4.1]{GRW 12}.  We will need to
construct a certain manifold as follows. Fix an oriented embedding
$$\beta: \{1\}\times D^{2n} \longrightarrow \partial W_{1,1}.$$
We define $H$ to be the manifold obtained from $W_{1,1}$ by attaching $[0, 1]\times D^{2n}$ to $W_{1,1}$ along the embedding $\beta$, i.e. $H:= W_{1,1}\cup_{\beta}D^{2n}\times[0,1]$. 

We construct a \textit{core} $C
\subset H$ as follows. Recall that $W_{1,1}$ is defined to be the manifold with boundary obtained from $S^{n}\times S^{n+1}$ by deleting the interior of an embedded open disk. Choose base points $a_{0} \in S^{n}$ and
$b_{0} \in S^{n+1}$ such that,
\begin{enumerate}
\item[(a)] the subspace $(S^{n}\times \{a_{0}\})\cup (\{b_{0}\}\times S^{n+1}) \subset S^{n}\times S^{n+1}$ is contained in $W_{1,1} \subset S^{n}\times S^{n+1}$,
\item[(b)] the pair $(a_{0}, -b_{0}) \in S^{n}\times S^{n+1}$ is contained in $W_{1,1}$. 
\end{enumerate}
We then choose an embedded path $\gamma$ in $H = W_{1,
  1}\cup_{\beta}D^{2n}\times[0,1]$ from the point
$(a_{0}, -b_{0})$ to $(0,0) \in [0,1]\times D^{2n}$ whose interior
does not intersect 
$$
(S^{n}\times \{a_{0}\})\cup (\{b_{0}\}\times
S^{n+1})
$$ 
and whose image agrees with $[0,1]\times \{0\} 
\subset [0,1]\times D^{2n}$,
inside $[0,1]\times D^{2n}$, see Fig. \ref{Fig2}.

We then define $C$ to be the subspace of $H$
given by
$$
C:=(S^{n}\times \{a_{0}\})\cup (\{b_{0}\}\times S^{n+1})\cup \gamma([0,1])\cup (\{0\}\times D^{2n})
\; \subset \; H.
$$
It is immediate that $C$ is homotopy equivalent to to $S^{n}\vee S^{n+1}$ and that furthermore $C$ is a deformation retract of $H$. 
\vspace{5mm}

\begin{figure}[!htb]
\label{fig: The Core 2}
\begin{picture}(0,0)
\put(-40, 55){{\tiny $\{0\}\times D^{2n}$}}
\put(238, 132){{\tiny $\{a_{0}\}\times S^{n+1}$}}
\put(108, 132){{\tiny $S^{n}\times\{b_{0}\}$}}
\put(95, -5){{\tiny $\gamma([0,1])$}}
\end{picture}
\includegraphics[height=1.8in]{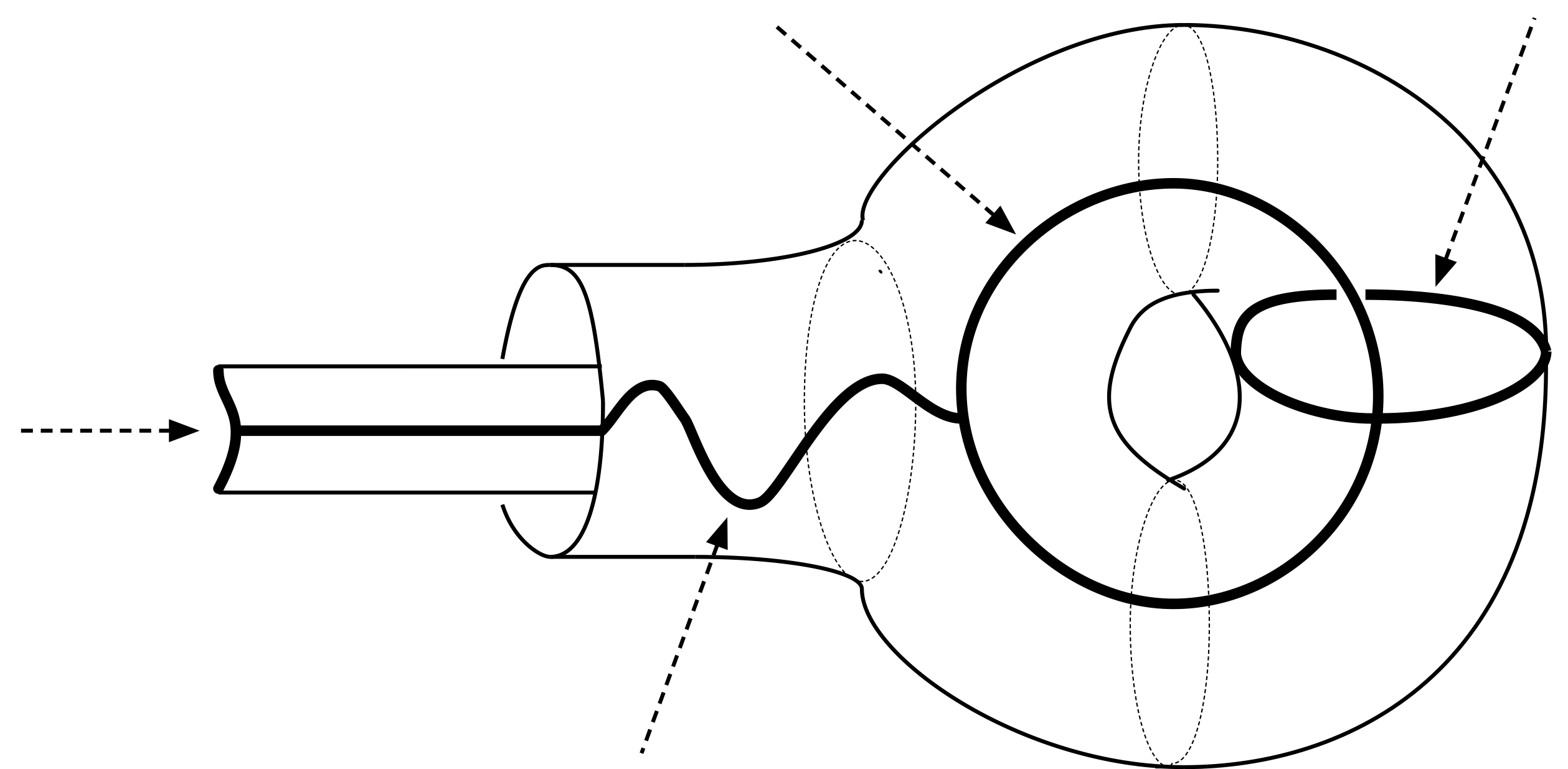}%[scale=.185]
\caption{Pictured above is the manifold $H = [0, 1]\times D^{2n}
  \cup_{\beta} W_{1,1}$. There is an embedding of the core, $C =
  (S^{n}\times\{b_{0}\})\cup(\{a_{0}\}\times S^{n+1})\cup \gamma([0,1])\cup (\{0\}\times
  D^{2n})$, which is a deformation retraction of $H$.}\label{Fig2}
\end{figure}

We choose an isotopy of
embeddings
$$\rho_{t}: H \longrightarrow H$$
defined for $t \in [0,\infty)$ which satisfies the following conditions:
\begin{enumerate} 
\item[i.] $\rho_{0} = Id_{H}$, 
\item[ii.] for any open neighborhood $U$ containing $C$, there exists $t_{U} \in [0,\infty)$ such that\\
 $\rho_{t}(H) \subset U$ for all $t > t_{U}$,
\item[iii.] for each $t \in  [0,\infty)$ there is an open neighborhood  $V_{t}$ of $C$ such that $\rho_{t}\vert_{V_{t}} \; = \; Id_{V_{t}}$. 
\end{enumerate}

Let $W$ be a compact $(2n+1)$-dimensional manifold with $\partial W\neq \emptyset$, equipped with an embedding 
of a base coordinate patch 
$$a: [0,1)\times\R^{2n} \longrightarrow W$$ such that $a^{-1}(\partial
  W) = \{0\}\times\R^{2n}$.  We define two semi-simplicial spaces
  $K_{\bullet}(W, a)$ and $K_{\bullet}^{\delta}(W, a)$ as follows.
\begin{defn} \label{defn: the embedding complex} Let $W$ and $a: [0,1)\times\R^{2n} \longrightarrow W$ be as above. The spaces of $p$-simplices, $K_{p}(W, a)$ and $K^{\delta}_{p}(W, a)$ are defined as follows:
\begin{enumerate} 
\item[(i)] 
Let $K_{0}(W, a)$ be the space of pairs $(t, \phi)$, where $t \in \R$ and 
$\phi: H \longrightarrow W$ is an embedding for which there exists $\epsilon > 0$ such that for 
$(s, z) \in [0, \epsilon)\times D^{2n} \; \subset \; H$, \; 
$\phi(s, z) = a(s, z + te_{1})$
where, $e_{1} \in \R^{2n}$ denotes the first basis vector. 
\item[(ii)] Let $K_{p}(W, a) \subset (K_{0}(W, a))^{p+1}$ consist of those tuples $((t_{0}, \phi_{0}), \dots, (t_{p}, \phi_{p}))$ such that $t_{0} < \cdots < t_{p}$ and the intersections  $\phi_{i}(C) \cap \phi_{j}(C)$ are empty for all $i , j \in \{0, \dots, p\}$. 
\end{enumerate}

The spaces $K_{p}(W, a)\subset (\R\times
  \Emb(H,W))^p$ are topologized using the $C^{\infty}$-topology on
the space of embeddings. 
Then the space $K^{\delta}_{p}(W, a)$ is defined so as to have the same
underlying set as $K_{p}(W, a)$, but topologized using the discrete
topology.  

The assignments $[p] \mapsto K_{p}(W, a)$ and $[p] \mapsto
K_{p}^{\delta}(W, a)$ define semi-simplicial spaces denoted by 
$K_{\bullet}(W, a)$ and
$K_{\bullet}^{\delta}(W, a)$ respectively.
Here the face maps
$d_{i}: K_{p}(W, a) \longrightarrow K_{p-1}(W, a)$ for \; $0 \leq i
\leq p$ are defined by sending a $p$-tuple $((t_{0}, \phi_{0}), \dots,
(t_{p}, \phi_{p}))$ to the $(p-1)$-tuple obtained by deleting the
$i$th entry.  The face maps $d_{i}: K^{\delta}_{p}(W, a)
\longrightarrow K^{\delta}_{p-1}(W, a)$ are defined similarly. 
%We
%denote these two simplicial spaces by $K_{\bullet}(W, a)$ and
%$K_{\bullet}^{\delta}(W, a)$.
\end{defn} 
We now define a simplicial complex $K^{c}(W, a)$
  related to the semi-siplicial space $K^{\delta}_{\bullet}(W, a)$.
\begin{defn} 
Let $K^{c}(W, a)$ be the simplicial complex with the set of vertices  identified with $K^{\delta}_{0}(W, a)$.  A set of vertices $\{(t_{0}, \phi_{0}), \dots, (t_{p}, \phi_{p})\}$, is defined to be a $p$-simplex in $K^{c}(W, a)$ if when written with $t_{0} \leq \cdots \leq t_{p}$, the list $((t_{0}, \phi_{0}), \dots, (t_{p}, \phi_{p}))$ is an element of $K^{\delta}_{p}(W, a)$. 
\end{defn}
Since a $p$-simplex of $K^{\delta}_{\bullet}(W, a)$ is
  determined by its unordered set of vertices, there is a natural
  homeomorphism of geometric realizations: 
\begin{equation}\label{eq-new}
|K^{\delta}_{\bullet}(W, a)|
  \cong |K^{c}(W, a)|.
\end{equation}

\begin{remark}
The semi-simplicial spaces $K_{\bullet}(W, a)$ and
  $K_{\bullet}^{\delta}(W, a)$ depend on the choice of embedding $a:
          [0,1)\times\R^{2n} \longrightarrow W$.
         However, it is easy to see that the
            homeomorphism type of their geometric realizations does
            not depend on the choice of $a$. For the
            duration of the paper we will suppress $a$ from the
            notation and denote 
$$
K_{\bullet}(W) := K_{\bullet}(W, a), \ \ \ 
            K_{\bullet}^{\delta}(W) :=K_{\bullet}^{\delta}(W, a),  \ \ \ \mbox{and}
\ \ \ K^{c}(W):=K^{c}(W, a).
$$
%The homeomorphism
%            type of the geometric realization does not depend on the
%            choice of $a$.
\end{remark}
%\newpage

\section{The Algebraic Invariants} \label{The Algebraic Invariants}
\subsection{Intersection Products and Vector Bundles} \label{Intersection Products and Vector Bundles}   
In this section we develop in detail the algebraic invariants
associated to an $(n-1)$-connected, $(2n+1)$-dimensional manifold that
were introduced in the introduction. We first state Haefliger's theorem from
\cite{Ha 62} which will allow us to identify the homotopy groups
$\pi_{n+1}(W_{g,1})$ with the set of isotopy classes of embeddings of
$S^{n+1}$ into $W_{g,1}$.
 \begin{comment}
  We now specialize to the case\\ $n = \frac{1}{2}[\dim(W_{g,1}) - 1]
  \geq 4$.  With this lower bound on $n$, \textit{Haefliger's Theorem}
  \ref{thm: Haefliger embedding}, implies that the group
  $\pi_{n+1}(W_{g,1})$ is in bijective correspondence with the set of
  isotopy classes of embeddings of $S^{n+1}$ into $W_{g,1}$. We now
  state \textit{Haefliger's Theorem} from \cite{Ha 62}.
\end{comment}
\begin{theorem} {\rm (Haefliger, \cite{Ha 62})}
\label{thm: Haefliger embedding} Let $V$ and $M$ be smooth manifolds with $\dim(V) = l$ and $\dim(M) = m$. Suppose further that $V$ is $(k-1)$-connected and that $M$ is $k$-connected for some integer $k \geq 0$. Then,
\begin{enumerate}
\item[(a)] Any continuous map of $V$ to $M$ is homotopic to a smooth
  embedding provided that $m \geq 2l - k +1$.
\item[(b)] Any two smooth embeddings of $V$ into $M$ which are homotopic as continuous maps are smoothly isotopic provided that $m \geq 2l - k + 2$. 
\end{enumerate}
\end{theorem}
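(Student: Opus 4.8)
Since this is a theorem of Haefliger we do not reproduce the proof, but we indicate the strategy; complete details are in \cite{Ha 62}. The plan for part (a) is as follows. One starts with a continuous map $f\colon V\to M$, replaces it by a smooth map by smooth approximation, and then, using immersion theory together with the connectivity hypotheses, homotopes it to an immersion in general position. For such an immersion the double-point set $D(f)\subset V$ is a smooth submanifold of dimension $2l-m$ carrying a free involution that interchanges the two sheets, and $f$ is an embedding precisely when $D(f)=\emptyset$. The inequality $m\ge 2l-k+1$ is equivalent to $\dim D(f)\le k-1$, so since $V$ is $(k-1)$-connected one can perform equivariant surgery on $D(f)$ --- the higher-dimensional analogue of the Whitney trick, with the bounding discs supplied by the $k$-connectivity of $M$ --- to reduce $D(f)$ to the empty set without leaving the homotopy class of $f$. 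This produces the desired embedding.

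For part (b) the plan is to run the relative form of part (a). Given homotopic embeddings $f_0,f_1\colon V\to M$, choose a homotopy $F\colon V\times I\to M$ and regard it as a level-preserving map $\widehat F\colon V\times I\to M\times I$ restricting to $f_0\sqcup f_1$ over $V\times\partial I$. Applying (a) relatively, keeping $\widehat F$ fixed near $V\times\partial I$, one homotopes $\widehat F$ to an embedding $V\times I\hookrightarrow M\times I$ still agreeing with $f_0$ and $f_1$ on the two ends; the domain now has dimension $l+1$, which is exactly why the hypothesis is strengthened to $m\ge 2l-k+2$. The resulting embedding is a concordance from $f_0$ to $f_1$, and in this codimension and connectivity range a concordance can be upgraded to an ambient isotopy, by the isotopy extension theorem together with Haefliger's concordance-implies-isotopy results, which gives the statement.

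The main obstacle is the elimination of double points in part (a) in the metastable range: outside the classical range $m\ge 2l+1$ one cannot cancel double points in independent pairs, since Whitney discs may meet $f(V)$ or one another and the double-point set can have strata of varying dimensions. Haefliger's device of performing the surgery equivariantly on the whole manifold $D(f)$, rather than point by point, is what makes the argument go through, and carrying this out --- together with the parametrized bookkeeping needed for the relative version used in (b) --- is the technically delicate part. Since all of this is done in \cite{Ha 62}, we take the theorem as given.
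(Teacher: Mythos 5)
The paper itself gives no proof of this statement — it is quoted directly from Haefliger's paper \cite{Ha 62} as an external result — and your proposal likewise defers to that reference, adding only a (reasonable and accurate) sketch of Haefliger's double-point-elimination strategy and its relative form. This is essentially the same approach as the paper, so nothing further is needed.
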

Setting $V = S^{n+1}$ and $M = W_{g,1}$ yields the following corollary:
\begin{corollary} Let $n \geq 4$. Then $\pi_{n+1}(W_{g,1})$ and $\pi_{n}(W_{g,1})$ are each in bijective correspondence with set of isotopy classes of embeddings of $S^{n+1}$ and $S^{n}$ into $W_{g,1}$ respectively. \end{corollary}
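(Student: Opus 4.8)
The plan is to deduce the corollary directly from Haefliger's Theorem \ref{thm: Haefliger embedding} by checking the numerical hypotheses in the two relevant cases. Recall that $W_{g,1}$ is $(n-1)$-connected of dimension $m = 2n+1$. First I would treat the case $V = S^{n+1}$, so $l = n+1$; I want to apply both parts of the theorem with $k = n$, noting that $S^{n+1}$ is $(n-1)$-connected (indeed $n$-connected, but $(k-1)$-connectedness with $k=n$ is all that is needed) and that $W_{g,1}$ is $n$-connected? No — here is the one subtle point: $W_{g,1}$ is only $(n-1)$-connected, not $n$-connected, so I cannot literally take $k = n$ in the hypothesis ``$M$ is $k$-connected''. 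Instead I would take $k = n-1$. Then part (b) requires $m \geq 2l - k + 2 = 2(n+1) - (n-1) + 2 = n + 5$; since $m = 2n+1$, this holds precisely when $2n + 1 \geq n + 5$, i.e. $n \geq 4$, which is exactly our standing hypothesis. Likewise part (a) requires $m \geq 2l - k + 1 = n+4$, which also holds for $n \geq 4$. Hence every homotopy class in $\pi_{n+1}(W_{g,1})$ is represented by an embedding of $S^{n+1}$, and two embeddings that are homotopic are isotopic, giving the desired bijection between $\pi_{n+1}(W_{g,1})$ and the set of isotopy classes of embedded $(n+1)$-spheres.

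For the case $V = S^{n}$, $M = W_{g,1}$ I would argue identically with $l = n$: take $k = n-1$ (using that $S^n$ is $(n-1)$-connected and $W_{g,1}$ is $(n-1)$-connected). Part (b) requires $m \geq 2l - k + 2 = 2n - (n-1) + 2 = n+3$, and part (a) requires $m \geq n+2$; both hold for $n \geq 4$ since $m = 2n+1$. This yields the bijection between $\pi_n(W_{g,1})$ and isotopy classes of embedded $n$-spheres. One small bookkeeping remark: Haefliger's theorem as stated speaks of homotopy classes of maps, whereas $\pi_n$ and $\pi_{n+1}$ are pointed homotopy classes; since $W_{g,1}$ is simply connected (as $n \geq 4 \geq 2$), the forgetful map from pointed to free homotopy classes is a bijection, so no ambiguity arises. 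I would mention this in one sentence.

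The main obstacle — really the only thing requiring any thought — is the mismatch between the connectivity we have ($(n-1)$-connected) and the connectivity one might naively want to plug in; the resolution is simply to use the weaker value $k = n-1$ and observe that the resulting dimension inequality $2n+1 \geq n+5$ is still satisfied because of the hypothesis $n \geq 4$. (This is also precisely why the hypothesis $n \geq 4$, equivalently $\dim W_{g,1} \geq 9$, appears in the Main Theorem.) Everything else is a direct quotation of the preceding theorem, so the proof is short: state the case $V = S^{n+1}$, verify $2n+1 \geq n+5$ and $2n+1 \geq n+4$, invoke parts (b) and (a); repeat verbatim for $V = S^n$ with the easier inequalities $2n+1 \geq n+3$ and $2n+1 \geq n+2$; and close with the simple-connectivity remark that identifies free with based homotopy classes.
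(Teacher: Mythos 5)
Your proposal is correct and follows exactly the route the paper intends: the paper offers no proof beyond the phrase ``Setting $V = S^{n+1}$ and $M = W_{g,1}$ yields the following corollary,'' and you have simply supplied the omitted verification of Haefliger's hypotheses, correctly identifying that one must take $k = n-1$ (since $W_{g,1}$ is only $(n-1)$-connected) and that the resulting inequality $2n+1 \geq n+5$ is precisely where $n \geq 4$ is used. The arithmetic in both cases checks out, and the basepoint remark is a harmless but accurate addition.
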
 
Now, let $M$ be a $k$-connected smooth manifold of dimension $m$. Let $r$ and $t$ be positive integers less than or equal to the number 
$\min\{\frac{1}{2}(m + k - 2), \; m-2\}.$ 
We study 
maps, 
\begin{equation}
\lambda_{r,t}: \pi_{r}(M)\times \pi_{t}(M) \; \longrightarrow  \; \pi_{r}(S^{m-t}) \quad \text{and} \quad \alpha_{r}: \pi_{r}(M) \; \longrightarrow  \; \pi_{r-1}(SO_{m-r}) 
\end{equation}
which were
defined by C.T.C. Wall in \cite{Wa 63} as follows. 
For an embedding
$g: S^{r} \longrightarrow M$
(representing an element of $\pi_{r}(M)$),
the class $\alpha_{r}([g]) \in \pi_{r-1}(SO_{m-r})$ is defined to be the element which classifies the normal bundle of $g$. This is well defined because of Theorem \ref{thm: Haefliger embedding}; all homotopies of $g$ can be assumed to be isotopies of smooth embeddings.

Defining $\lambda_{r, t}$ will take more work. Let 
$$g: S^{r} \longrightarrow M \quad \text{and} \quad f: S^{t} \longrightarrow M$$
be embeddings. Let $N_{f} \subset M$ denote a closed tubular neighborhood of $f(S^{t})$ and let $\pi:  N_{f} \longrightarrow f(S^{t})$ denote the bundle projection. Let $U \subset S^{t}$ denote a closed neighborhood diffeomorphic to a disk. We set 
$$V_{0} := \pi^{-1}(f(U)) \quad \text{and} \quad V_{1} := \pi^{-1}[f(S^{t}) - \Int(f(U))].$$ 
We define
$$T_{f}: [M - \Int(V_{0})] \; \longrightarrow \; S^{m-t}$$
to be the \textit{Pontryagin-Thom map} associated to the normal bundle $\pi\vert_{V_{1}}: V_{1} \longrightarrow (S^{t}- \Int(U)$). More precisely, letting $\varphi: V_{1} \stackrel{\cong} \longrightarrow D^{t}\times D^{m-t}$ be a trivialization, and identifying $S^{m-t}$ with the push-out $D^{m-t}\cup_{c} \{\infty\}$ (with $c : S^{m-t-1} \rightarrow \{\infty\}$ the constant map),
$T_{f}$ is defined by the formula 
\begin{equation} \label{eq: pontryagin thom} x \mapsto \begin{cases} 
\text{proj}_{D^{m-t}}(\phi(x)) & \text{if $x \in V_{1}$}\\
\infty & \text{if $x \in \; [M - \Int(V_{0})] - V_{1}$.}
\end{cases}
\end{equation}
Clearly if $f$ and $f'$ are isotopic embeddings, then $T_{f}$ and $T_{f^{'}}$ will be homotopic (the class of $T_{f}$ does not depend on the choice of trivialization $\varphi$ since the base-space is contractible).
Since $r \leq m-2$ and $V_{0} \cong D^{m}$, the inclusion map $i: M - \Int(V_{0}) \hookrightarrow M$ induces an isomorphism 
$$i_{*}: \pi_{r}(M - \Int(V_{0})) \stackrel{\cong} \longrightarrow  \pi_{r}(M).$$
Finally, we define
\begin{equation} \lambda_{r, t}([g], [f]) \; = \; (T_{f})_{*}\circ i_{*}^{-1}[g]. \end{equation}
There is one more important map defined by C.T.C Wall in \cite{Wa 63}
which we need to use in order to compute the value of $\alpha_{r}$ on
certain isotopy classes. Let $2m \geq 3r + 3$, $m \geq 2r - t + 3$,
and $r \geq t$. Then there is a map,
\begin{equation} \label{eq: bundle composition} F: \pi_{t-1}(SO_{m-t})\times \pi_{r}(S^{t}) \longrightarrow \pi_{r-1}(SO_{m-r}) \end{equation}
defined as follows. For $(x, y) \in \pi_{t-1}(SO_{m-t})\times \pi_{r}(S^{t})$, let $E \longrightarrow S^{t}$ be the disk-bundle with fibre-dimension $m - t$, classified by the element $x$. Denote by $i_{0}: S^{t} \rightarrow E$ the zero section. If $f: S^{r} \longrightarrow S^{t}$ is a map representing $y$, the dimensional assumptions on $t, r,$ and $m$ imply that the composition
$$S^{r} \stackrel{f} \longrightarrow S^{t}  \stackrel{i_{0}} \longrightarrow E$$
may be deformed to an embedding, which we denote by $\widehat{f}: S^{r} \longrightarrow E$. We then define $F(x, y) \in \pi_{r-1}(SO_{m-r})$ to be the element which classifies the normal bundle of $\widehat{f}(S^{r})$ in $E$. 

We will need an important result from \cite{Wa 63} regarding this map $F$. 
\begin{lemma} {\rm (Wall, \cite{Wa 63})}
\label{lemma: linear bundle comp} Let $y \in \pi_{r}(S^{t})$ be in the image of the suspension map
$S: \pi_{r-1}(S^{t-1}) \longrightarrow \pi_{r}(S^{t})$. Then the map 
$\pi_{t-1}(SO_{m-t}) \longrightarrow \pi_{r-1}(SO_{m-r})$ given by  $x \; \mapsto \; F(x, y)$
is a homomorphism. 
\end{lemma}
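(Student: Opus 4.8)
The plan is to unwind the definition of $F(x,y)$ under the hypothesis that $y = S(y')$ is a suspension, and to exhibit the disk bundle $E \to S^t$ together with the embedded sphere $\widehat{f}(S^r)$ in a \emph{split} form that separates the $\pi_{t-1}(SO_{m-t})$-dependence from the $\pi_r(S^t)$-dependence in an additive way. Concretely, I would first recall that for a suspended class $y = S(y')$ a representing map $f\colon S^r \to S^t$ factors (up to homotopy) through the equatorial inclusion $S^{t-1}\hookrightarrow S^t$, i.e. $f$ misses a point of $S^t$, so the image of $f$ — and hence, after the small deformation to an embedding inside $E$, the image of $\widehat f$ — can be pushed into the total space over a hemisphere $D^t_- \subset S^t$. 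Over a hemisphere the disk bundle $E$ is canonically trivial, so $\widehat f(S^r)$ lies in a trivial patch $D^t \times D^{m-t}$, and its normal bundle in $E$ is the normal bundle of the same embedded sphere in $\R^m$ (which, for a sphere coming from $i_0 \circ f$ with $f$ nullhomotopic on that patch, is a fixed stably trivial bundle independent of $x$), \emph{twisted} by the clutching data of $x$ along the equator $S^{t-1}$ where the two hemisphere trivializations are glued.

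The second step is to make precise that this twisting is additive in $x$. I would set up an explicit model: represent $x \in \pi_{t-1}(SO_{m-t})$ by a clutching function, build $E$ by gluing $D^t_+\times D^{m-t}$ to $D^t_-\times D^{m-t}$ along $S^{t-1}$ via $x$, arrange $\widehat f(S^r)$ to sit inside $D^t_- \times D^{m-t}$ and to meet a collar of the equator in a controlled ``linear'' fashion, and then observe that the classifying map of the normal bundle of $\widehat f(S^r)$ in $E$ is obtained from the classifying map of its normal bundle in $D^t_-\times D^{m-t}$ by composing the ``crossing the equator'' portion with the action of $x$. Passing to $\pi_{r-1}$ of the structure group, the dependence on $x$ becomes $y_* \colon \pi_{t-1}(SO_{m-t}) \to \pi_{r-1}(SO_{m-r})$ induced by (a representative of) $f$, precomposed with the equator-crossing map; since $f$ is a based map of spheres, $y_*$ is a homomorphism, and the constant term (the normal bundle for $x = 0$) is the basepoint. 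Hence $x \mapsto F(x,y)$ is a homomorphism. I would remark that the dimensional hypotheses $2m \ge 3r+3$, $m \ge 2r-t+3$, $r \ge t$ are exactly what is needed to (i) perform the deformation of $i_0\circ f$ to an embedding and keep it within the hemisphere patch, and (ii) ensure the normal bundle is well defined and stable enough that these manipulations do not lose information — so I would invoke them only at the points where general position is used, without reproving them.

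The main obstacle I anticipate is not the additivity bookkeeping but the geometric step of arranging $\widehat f(S^r)$ compatibly with the hemisphere decomposition: one must deform $i_0 \circ f$ to an embedding \emph{and simultaneously} keep its image inside the trivializing patch $D^t_- \times D^{m-t}$ over $D^t_-$, so that ``normal bundle in $E$'' literally equals ``normal bundle in a trivial bundle, twisted only by $x$ along the equator.'' This requires a relative/parametrized version of the Haefliger–Whitney embedding argument (Theorem~\ref{thm: Haefliger embedding}), carried out rel the collar of the equator where the gluing happens, and some care that the isotopy used to realize ``$f$ misses a point'' can be done through maps into $E$ rather than just into $S^t$. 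Once the embedded sphere is in this normal form, the computation of its normal bundle as a clutched bundle and the identification of the $x$-dependence with a group homomorphism is essentially formal. I would therefore organize the write-up as: (1) reduce to $f$ landing in a hemisphere via the suspension hypothesis; (2) put $\widehat f(S^r)$ in normal form inside the trivial patch; (3) read off the normal bundle's classifying data as $x$ acting through the equator, and conclude additivity — citing \cite{Wa 63} for the parts already established there and only filling in the additivity argument in detail.
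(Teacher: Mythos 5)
A preliminary remark: the paper does not prove this lemma at all --- it is stated as a citation to Wall \cite{Wa 63} and used as a black box --- so there is no internal argument to compare your proposal against; I can only assess it on its own terms.

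On its own terms, the proposal fails at its first, load-bearing step. You assert that because $y$ lies in the image of the suspension homomorphism, a representative $f\colon S^{r}\to S^{t}$ ``factors (up to homotopy) through the equatorial inclusion $S^{t-1}\hookrightarrow S^{t}$, i.e.\ $f$ misses a point of $S^{t}$.'' That is not what the hypothesis says, and if it were true the lemma would be vacuous: a map $S^{r}\to S^{t}$ missing a point lands in a contractible set and is null-homotopic, forcing $y=0$. The correct consequence of the hypothesis is that $f$ may be chosen to be an honest suspension $Sf'$ of some $f'\colon S^{r-1}\to S^{t-1}$; such a map carries the upper hemisphere of $S^{r}$ onto the upper hemisphere of $S^{t}$ and likewise for the lower ones, so $\widehat f(S^{r})$ necessarily meets both trivializing patches of $E$. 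Your normal form --- $\widehat f(S^{r})$ contained in the single trivial patch $D^{t}_{-}\times D^{m-t}$, with all $x$-dependence concentrated in an ``equator-crossing'' --- is therefore unavailable, and the clutching analysis built on it does not start. (The proposal is also internally inconsistent here: a sphere contained entirely in $D^{t}_{-}\times D^{m-t}$ has no equator-crossing portion to twist by $x$.) Two further points. First, your closing claim that the $x$-dependence is precomposition with $f$ and ``is a homomorphism since $f$ is a based map of spheres'' begs the question: precomposition with a fixed map of spheres is \emph{not} in general additive on homotopy groups --- that failure is exactly why the suspension hypothesis appears in the statement --- and your sketch never identifies where that hypothesis genuinely enters the additivity. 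Second, the stable class of $\nu(\widehat f)$ is that of $f^{*}\xi_{x}$ and is additive in $x$ with no hypothesis on $y$ whatsoever; the entire content of the lemma is unstable, since $2m\geq 3r+3$ does not put a rank-$(m-r)$ bundle over $S^{r}$ in the stable range, and the proposal never engages with this unstable discrepancy. As the paper does, I would simply cite Wall's proof in \cite{Wa 63} rather than attempt to reconstruct it along these lines.
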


The map $F$ can be used to compute the value of $\alpha_{r}$ on homotopy classes of $\pi_{r}(M)$ which can be factored as a composition 
$S^{r} \stackrel{f} \longrightarrow S^{t} \stackrel{g} \longrightarrow M.$
In this case, it follows directly from the definition of $F$ that
\begin{equation} \label{eq: bundle composition formula}
\alpha_{r}( [ g\circ f ] ) \; = \; F(\alpha_{t}( [ g ] ), \; [ f ]). 
\end{equation}
This formula will prove useful in the next section when computing the value of $\alpha_{n+1}$ on $\pi_{n+1}(W_{g,1})$.

We now specialize to our main case of interest where $M$ is an $(n-1)$-connected, $(2n+1)$-manifold with $n \geq 4$. The map $\lambda_{n, n+1}$ was defined using the \text{Pontryagin-Thom} construction and thus can be interpreted as an intersection invariant. It is easy to deduce:
\begin{proposition} \label{prop: hurewicz intersection}  For elements  $x \in \pi_{n}(M)$ and $y \in \pi_{n+1}(M)$, 
 let $k_{x, y} \in \Z$  be the signed intersection number for transversal embeddings which represent elements $x$ and $y$.
We have, $\lambda_{n, n+1}(x, \; y) \; = \; k_{x, y}\cdot \iota_{n}$
where $\iota_{n}$ is the standard generator of the group $\pi_{n}(S^{n})$. 
\end{proposition}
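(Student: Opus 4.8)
The plan is to compute $\lambda_{n,n+1}(x,y)$ directly from the definition via the Pontryagin--Thom map $T_f$, where $f\colon S^{n+1}\to M$ is an embedding representing $y$ and $g\colon S^n\to M$ an embedding representing $x$, chosen to be transverse to $f(S^{n+1})$ (possible by general position, since $n+(n+1)<2n+1$... wait, actually $n+(n+1)=2n+1=\dim M$, so transverse intersections are isolated points). First I would replace $g$ by an isotopic embedding meeting $f(S^{n+1})$ transversally in finitely many points $q_1,\dots,q_N$, and meeting the closed tubular neighborhood $N_f$ only in small disk fibres over those points; since $\lambda_{n,n+1}$ only depends on isotopy classes this is harmless. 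I also arrange that each intersection point lies in the ``good'' part of the tube, i.e.\ over $S^{n+1}-\Int(U)$ where the trivialization $\varphi\colon V_1\xrightarrow{\cong}D^{n+1}\times D^n$ is defined; equivalently, shrink $U$ away from the (finitely many) points $\pi(q_i)$.

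Next I would unwind the composition $(T_f)_*\circ i_*^{-1}[g]$. Under $i_*^{-1}$, the class $[g]$ is represented by the same embedded sphere $g(S^n)$, now regarded as lying in $M-\Int(V_0)$. Applying $T_f$ and using formula (\ref{eq: pontryagin thom}): the sphere $g(S^n)$ is sent to $S^{n}\to S^{m-t}=S^{n}$ (here $m-t=2n+1-(n+1)=n$) which is the constant map $\infty$ outside a neighborhood of the $q_i$, and near each $q_i$ it is, in the local coordinates $\varphi$, the map $z\mapsto \proj_{D^{n}}(z,\cdot)$ restricted to the normal disk slice through $q_i$. Because $g$ is transverse to the zero section $f(S^{n+1})=D^{n+1}\times\{0\}$ at $q_i$, this local model is (up to a linear isomorphism of $D^n$) the identity collapse $D^n/\partial D^n\cong S^n$ with a sign $\varepsilon_i=\pm1$ determined by whether the local orientation of $g(S^n)$ matches the coorientation coming from $f$. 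So the resulting map $S^n\to S^n$ is homotopic to a wedge of $N$ collapse maps with signs, hence has degree $\sum_i\varepsilon_i$, which is by definition the signed intersection number $k_{x,y}$. Therefore $\lambda_{n,n+1}(x,y)=k_{x,y}\cdot\iota_n$.

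The main obstacle I anticipate is the bookkeeping of signs and orientations: making precise that the degree of the Pontryagin--Thom collapse equals the signed count of transverse intersections, and checking that the two sign conventions (the one implicit in ``classifies the normal bundle'' via $\varphi$, and the one in ``signed intersection number'') agree, or at least agree up to a global sign that is absorbed into the choice of generator $\iota_n$. This is the standard local computation underlying the Pontryagin--Thom correspondence between degree and framed cobordism class of a $0$-manifold, but one must be careful that the normalization is exactly the one that produces $\iota_n$ rather than $\pm\iota_n$ or a multiple. A secondary point is justifying the general-position moves in the equidimensional-intersection case: since $\dim g(S^n)+\dim f(S^{n+1})=\dim M$, transversality gives isolated points rather than empty intersection, and one should note that we may freely isotope $g$ to be transverse and to interact with $N_f$ and $U$ in the controlled way described above, using only that $\lambda_{n,n+1}$ is an isotopy invariant of its arguments (as already observed in the text, since $T_f$ depends only on the isotopy class of $f$, and $i_*$ is the identity on underlying embedded spheres).
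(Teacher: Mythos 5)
Your proof is correct and is exactly the argument the paper intends: the paper states this proposition without proof (``It is easy to deduce'' from the Pontryagin--Thom description of $\lambda_{n,n+1}$), and your write-up is the standard unwinding of that definition, reducing the degree of the collapse map $T_f\circ g$ to a signed count of the transverse intersection points. The points you flag as delicate (sign conventions, and arranging $g(S^n)$ to avoid $V_0$ and meet $N_f$ only in normal fibres) are handled appropriately.
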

Let $x$ and $y$ be as in the previous proposition and let $f: S^{n}
\rightarrow M$ and $g: S^{n+1} \rightarrow M$ be transversal
embeddings which represent $x$ and $y$.  Then by the \textit{Whitney
  Trick}, \cite[Theorem 6.6]{Mil 65}, one can find an isotopy $h_{s}:
S^{n+1} \longrightarrow M$, defined for $s \in [0, 1]$ with $h_{0} =
g$ and such that the image of $h_{1}$ intersects the image of $f$
transversally at exactly $|k_{x, y}|$-many points.

Now, let $f: S^{n+1} \rightarrow M$ and $g: S^{n+1} \rightarrow M$ be
embeddings which intersect transversally.  We will need a generalized
version of the Whitney Trick that will enable us to construct
an isotopy $h_{t}: S^{n+1} \rightarrow M$ such that $h_{0} = g$ and
$h_{1}(S^{n+1})\cap f(S^{n+1}) = \emptyset$ if and only if
$\lambda_{n+1, n+1}([f], [g]) = 0$. In \cite{We 67} and \cite{HQ 74},
such generalized versions of the Whitney Trick were
developed. We have:
\begin{proposition} {\rm (Wells, \cite{We 67})} \label{prop: Generalized Whitney Trick} 
Let $M$ be an $(n-1)$-connected, $(2n+1)$-dimensional manifold. Let
$f: S^{n+1} \longrightarrow M$ and $g: S^{n+1} \longrightarrow M$ be
embeddings such that $\lambda_{n+1, n+1}([f], [g]) = 0$. Then there is
exists an isotopy $h_{t}: S^{n+1} \rightarrow M$ such that $h_{0} = g$
\text{and} $h_{1}(S^{n+1})\cap f(S^{n+1}) = \emptyset.$ Furthermore,
if $f$ and $g$ are transversal and $U \subset S^{n+1}$ is an open disk
containing $g^{-1}(f(S^{n+1})\cap g(S^{n+1}))$, then the isotopy
$h_{t}$ may be chosen so that $h_{t}(x) = g(x)$ for all $x \in
S^{n+1}\setminus U$ and $t \in [0, 1]$.
\end{proposition}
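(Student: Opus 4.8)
The plan is to deduce this from Wells' embedding/isotopy results in \cite{We 67}, reformulating the statement so that it becomes a standard piece of the theory of isotopy in codimension $\geq 3$. First I would observe that $f$ and $g$ each have image a smoothly embedded $(n+1)$-sphere in the $(2n+1)$-manifold $M$; since $\dim M - (n+1) = n \geq 4 \geq 3$, we are safely in the metastable range where Wells' obstruction theory for pushing one submanifold off another applies. The key point is that Wells associates to the pair $(f,g)$ (or rather to $g$ together with the ambient submanifold $f(S^{n+1})$) a single obstruction, living in a group which in this dimension and connectivity range is naturally identified with $\pi_{n+1}(S^{n})$ via the Pontryagin–Thom construction, and this obstruction vanishes precisely when $g$ is isotopic to a map missing $f(S^{n+1})$. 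So the first step is to identify that obstruction with $\lambda_{n+1,n+1}([f],[g])$.

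The second step carries out that identification. By Proposition \ref{prop: hurewicz intersection}'s analogue in degree $(n+1,n+1)$ — i.e. by the very definition of $\lambda_{n+1,n+1}$ via the map $T_{f}\circ i_{*}^{-1}$ in (\ref{eq: pontryagin thom}) and the lines following it — the class $\lambda_{n+1,n+1}([f],[g]) \in \pi_{n+1}(S^{n})$ is exactly the homotopy class of the composite $S^{n+1} \xrightarrow{g} M - \Int(V_{0}) \xrightarrow{T_{f}} S^{n-t}$ with $t = n+1$, where $S^{m-t} = S^{2n+1-(n+1)} = S^{n}$. This is, up to sign conventions, precisely the primary obstruction that Wells' theory produces to isotoping $g$ off $f(S^{n+1})$: the Pontryagin–Thom class records both the algebraic intersection data and the framing/linking data of the intersection. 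Because the codimension is $\geq 3$, there are no higher obstructions, and Wells' theorem gives that $\lambda_{n+1,n+1}([f],[g]) = 0$ is equivalent to the existence of an isotopy $h_t$ with $h_0 = g$ and $h_1(S^{n+1}) \cap f(S^{n+1}) = \emptyset$.

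For the \emph{furthermore} clause, I would appeal to the relative/local form of Wells' result. If $f$ and $g$ are already transverse, their intersection $f(S^{n+1}) \cap g(S^{n+1})$ is a compact $1$-manifold, and by hypothesis $g^{-1}$ of it lies inside an open disk $U \subset S^{n+1}$. The obstruction-theoretic construction can be performed relative to $S^{n+1} \setminus U$: outside $U$ the map $g$ already misses $f(S^{n+1})$, so there is nothing to modify there, and Wells' deformation can be taken to be supported in (a slightly shrunk copy of) $U$. Concretely one applies the non-relative statement to $g|_U$ regarded as a properly embedded disk with the given boundary condition, using that the relevant obstruction group for a disk rel boundary is the same $\pi_{n+1}(S^{n})$ and the obstruction agrees with $\lambda_{n+1,n+1}([f],[g])$; then one extends the resulting isotopy by the identity over $S^{n+1} \setminus U$.

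The main obstacle I anticipate is the precise bookkeeping in the second step: matching Wells' obstruction group and its generator with $\pi_{n+1}(S^{n})$ and the class $\lambda_{n+1,n+1}([f],[g])$ as defined via (\ref{eq: pontryagin thom}), including sign and framing conventions, and checking that the metastable-range hypotheses in \cite{We 67} are met with room to spare when $n \geq 4$ (they are: $2(2n+1) \geq 3(n+1)+1$ reduces to $n \geq 2$, and similar inequalities hold). Once the obstruction is correctly identified, both the main statement and the local refinement follow formally from Wells' theorem, so no genuinely new geometric input is needed beyond carefully invoking \cite{We 67} (and, as noted, \cite{HQ 74} provides an alternative route via the Hudson–Quinn engulfing/Whitney-trick-in-families argument if one prefers).
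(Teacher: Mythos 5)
Your proposal matches the paper's own justification (given in the Remark immediately following the Proposition): both reduce to the main theorem of \cite{We 67} by identifying Wells' obstruction $\alpha(A,B;X)\in\pi^{S}_{1}$ with $\lambda_{n+1,n+1}([f],[g])\in\pi_{n+1}(S^{n})$ via the Pontryagin--Thom description, and both obtain the relative (\emph{furthermore}) clause by applying Wells' result away from the region where $g$ already misses $f(S^{n+1})$. The only cosmetic difference is in how the relative case is set up (you work with $g|_{U}$ as a disk rel boundary; the paper deletes a neighborhood and applies Wells to the complementary pieces), but the substance is the same.
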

\begin{remark} This is a specialization of the main theorem of \cite{We 67}. In \cite{We 67} the author associates to sub-manifolds $A^{r}, B^{t} \subset X^{m}$ with $\partial A, \partial B \subset \partial X$ and $(\partial A)\cap(\partial B) = \emptyset$, an invariant valued in the stable homotopy group
$\alpha(A, B; X) \in \pi^{S}_{r+t-m}$
and proves that if certain dimensional and connectivity conditions are
met, then there exists an isotopy $h_{t}: A^{r} \rightarrow X$ with
$h_{0} = i_{A}$ and $h_{1}(A)\cap B = \emptyset$ if and only if
$\alpha(A, B; X) = 0$.  
Furthermore, it is proven that this isotopy
can be chosen so as to be constant on $\partial A$.  It can be easily
checked that by setting $A := g(S^{n+1})$, $B := f(S^{n+1})$, and $X
:= M$, the invariant $\alpha(A, B; X)$ is equal to $\lambda_{n+1,
  n+1}([f], [g])$ after identifying $\pi_{n+1}(S^{n})$ (for $n \geq
4)$ with $\pi_{1}^{S}$. 

We emphasize that with $n \geq 4$, the dimensional and connectivity
conditions from the main theorem of \cite{We 67} are satisfied. In
order to construct such an isotopy $h_{t}$ that is constant outside of
a neighborhood $U \subset S^{n+1}$ of $g^{-1}(f(S^{n+1})\cap
g(S^{n+1}))$, we set $A = g(S^{n+1} - U)$, $B = f(S^{n+1})$, and $X =
M - N(U)$ where $N(U) \subset M$ is a tubular neighborhood of $U$ in
$M$. In this case $\alpha(A, B; X)$ is still equal to $\lambda_{n+1,
  n+1}([f], [g])$, since $\lambda_{n+1, n+1}([f], [g])$ was defined
precisely by deleting such neighborhoods from $S^{n+1}$ and $M$ and
then applying the Pontryagin-Thom construction (\ref{eq: pontryagin
  thom}).
\end{remark} 
We now state an important theorem regarding $\lambda_{n+1,n+1}$ and
$\alpha_{n+1}$ which was proven in \cite{Wa 63}. For what follows,
denote by $\pi: SO_{n+1} \longrightarrow SO_{n+1}/SO_{n} \cong S^{n}$
the projection map, denote by $\partial: \pi_{n+1}(S^{n})
\longrightarrow \pi_{n}(SO_{n})$ the boundary map in the long-exact
sequence associated to the fibre-sequence $SO_{n} \rightarrow SO_{n+1}
\rightarrow S^{n}$, and finally denote by $S: \pi_{n}(S^{n-1})
\rightarrow \pi_{n+1}(S^{n})$ the suspension map.

\begin{theorem} {\rm (Wall, \cite{Wa 63})} \label{thm: Wall intersection formulas} 
Let $M$ be an $(n-1)$-connected, $(2n+1)$-dimensional manifold with $n \geq 4$. Then for $x, y \in \pi_{n+1}(M)$, we have
$$\lambda_{n+1, n+1}(x, x) = S\pi\alpha_{n+1}(x),$$
$$\alpha_{n+1}(x + y) \; = \; \alpha_{n+1}(x) + \alpha_{n+1}(y) + \partial (\lambda_{n+1, n+1}(x, y)).$$
Furthermore, $\lambda_{n+1, n+1}$ is $(-1)^{(n+1)}$-symmetric bilinear.
\end{theorem}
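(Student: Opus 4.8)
The plan is to reproduce, in this setting, the arguments of Wall \cite{Wa 63}, carrying everything out on the level of embedded spheres. The first step is to invoke Theorem \ref{thm: Haefliger embedding}: since $n\geq 4$ and $M$ is $(n-1)$-connected of dimension $2n+1$, every class of $\pi_{n+1}(M)$ is represented by an embedding $S^{n+1}\hookrightarrow M$, homotopic embeddings are isotopic, and by general position any two such embeddings may be put in transverse position, meeting along a closed $1$-manifold $L\subset M$. Thus $\lambda_{n+1,n+1}$, $\alpha_{n+1}$ and their interrelations become statements about normal bundles of embedded spheres and about framed bordism classes of transverse intersection loci, and all auxiliary choices (tubular neighbourhoods, the disk $U\subset S^{n+1}$, trivialisations) are irrelevant up to the homotopies built into the definitions. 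I would then prove, in order: (i) linearity of $\lambda_{n+1,n+1}$ in the first slot, (ii) $(-1)^{n+1}$-symmetry, hence bilinearity, (iii) the self-intersection formula, and (iv) the summation formula.

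Linearity in the first slot is essentially formal: fixing an embedding $f$ representing $y$, together with the data $V_0\cong D^{2n+1}$, $V_1$, $T_f$, the map $\lambda_{n+1,n+1}(-,y)$ is the composite of the group homomorphisms $i_*^{-1}\colon\pi_{n+1}(M)\xrightarrow{\ \cong\ }\pi_{n+1}(M-\Int V_0)$ and $(T_f)_*\colon\pi_{n+1}(M-\Int V_0)\to\pi_{n+1}(S^n)$, hence a homomorphism. For the symmetry I would put $f$ and $g$ in transverse position and compare the two Pontryagin--Thom descriptions of $\lambda_{n+1,n+1}([f],[g])$ and $\lambda_{n+1,n+1}([g],[f])$: both are read off from the common intersection $1$-manifold together with the framings induced on its normal data in $f(S^{n+1})$, in $g(S^{n+1})$ and in $M$, and interchanging $f$ and $g$ permutes the two rank-$n$ normal blocks. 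In the range $n\geq 4$ the target $\pi_{n+1}(S^n)\cong\pi_1^S$ has exponent two, so the sign $(-1)^{n+1}$ is immaterial and the assertion reduces to the evident symmetry of the transverse-intersection datum. Linearity in the second slot then follows from (i) together with the symmetry.

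For the self-intersection formula, represent $x$ by an embedding $g$ with normal bundle $\nu\to S^{n+1}$ of rank $n$, classified by $\alpha_{n+1}(x)\in\pi_n(SO_n)$, with tubular neighbourhood $E=D(\nu)$. The key observation is that the single obstruction to $\nu$ admitting a nowhere-zero section lives in $H^{n+1}(S^{n+1};\pi_n(S^{n-1}))\cong\pi_{n+1}(S^n)$ and is exactly the suspension of the composite $S^n\xrightarrow{\alpha_{n+1}(x)}SO_n\xrightarrow{\pi}S^{n-1}$, i.e. $S\pi\alpha_{n+1}(x)$. To evaluate $(T_g)_*i_*^{-1}(x)$ I would construct a representative of $i_*^{-1}(x)$ inside $M-\Int V_0$ by perturbing the zero section: over the complementary disk $S^{n+1}-\Int U$, where $\nu$ is trivial, push $g$ off itself along a unit normal section, landing in $V_1$; the obstruction to extending this section over $U$ is precisely $\pi\alpha_{n+1}(x)$, and over $U$ one caps off inside $\partial V_0$ by a disk realizing that obstruction. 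Pushing the resulting sphere through $T_g$ and bookkeeping the clutching of $\nu$ across $\partial U$ identifies the homotopy class of $T_g$ composed with the perturbed sphere with the suspension of $\pi\alpha_{n+1}(x)$ — the suspension arising because collapsing $(D^{n+1},S^n)$ onto $(D^n,S^{n-1})$ is exactly the suspension map $S\colon\pi_n(S^{n-1})\to\pi_{n+1}(S^n)$ — which gives $\lambda_{n+1,n+1}(x,x)=S\pi\alpha_{n+1}(x)$.

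For the summation formula, represent $a$ and $b$ by transverse embeddings $f_a,f_b$ with intersection $1$-manifold $L$, whose framed bordism class in $\pi_{n+1}(S^n)\cong\pi_1^S$ computes $\lambda_{n+1,n+1}(a,b)$ (Proposition \ref{prop: Generalized Whitney Trick} and the remark following it). Form an embedded representative $f_{a+b}$ of $a+b$ by joining $f_a$ and $f_b$ with a thin embedded tube along an arc chosen to miss $L$; away from the tube and the old intersection locus, $\nu(f_{a+b})$ is $\nu(f_a)\sqcup\nu(f_b)$, but along each component circle of $L$ the two sheets must be pushed apart and glued into one embedded sheet, and the obstruction to doing so compatibly with the product framings is, locally around that circle, the image under $\partial\colon\pi_{n+1}(S^n)\to\pi_n(SO_n)$ of the local framing invariant of the circle. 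Summing the local contributions over $L$ and matching the total with the global framed bordism class yields the correction term $\partial(\lambda_{n+1,n+1}(a,b))$, which is the summation formula. The crux of the whole argument — and where essentially all of the work lies — is precisely this normal-bundle bookkeeping in (iii) and (iv): identifying the perturbed-sphere computation with $S\pi\alpha_{n+1}(x)$, and showing that the discrepancy between $\nu(f_{a+b})$ and $\nu(f_a)\sqcup\nu(f_b)$ localises along the intersection loops and equals $\partial$ of their local framing invariant. This is exactly the business of handling intersections and the Pontryagin--Thom construction above the middle dimension, and it is where the hypothesis $n\geq 4$ is genuinely used, via Theorem \ref{thm: Haefliger embedding}.
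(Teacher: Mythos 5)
The paper offers no proof of this theorem: it is quoted verbatim from Wall \cite{Wa 63}, so there is no in-paper argument to compare against, and your sketch is in effect a reconstruction of Wall's original route. Steps (i) and (ii) (linearity in the first slot directly from the definition of $T_f$, and symmetry, which for $n\geq 4$ is insensitive to the sign since $\pi_{n+1}(S^{n})\cong\Z/2$) are complete as written, and steps (iii) and (iv) correctly identify the geometric mechanisms — the obstruction $\pi_{*}\alpha_{n+1}(x)\in\pi_{n}(S^{n-1})$ to pushing an embedded representative off itself, with the suspension entering through the Pontryagin--Thom collapse, and the normal-bundle correction concentrated along the intersection circles and measured by $\partial\colon\pi_{n+1}(S^{n})\to\pi_{n}(SO_{n})$. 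The only caveat is that (iii) and (iv) remain sketches precisely at the normal-bundle bookkeeping you yourself flag as the crux, so as a self-contained proof one would still have to carry out Wall's computation there rather than describe it.
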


Here is one more useful result regarding $\lambda_{n, n+1}$ and $\lambda_{n+1, n+1}$ from \cite{Wa  63}. 

\begin{proposition}  {\rm (Wall, \cite{Wa 63})} \label{prop: lambda composition} Let $M$ be an $(n-1)$-connected, $(2n+1)$-dimensional manifold with $n \geq 4$. Let $f: S^{n} \rightarrow M$ and $g: S^{n+1} \rightarrow M$ be embeddings and let $\beta: S^{n+1} \longrightarrow S^{n}$ be a map. Then $\lambda_{n+1, n+1}([f\circ \beta], \; [g]) \; = \; \lambda_{n, n+1}([f], \; [g]) \circ [\beta].$ \end{proposition}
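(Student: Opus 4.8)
The plan is to prove the composition formula
$\lambda_{n+1,n+1}([f\circ\beta],[g]) = \lambda_{n,n+1}([f],[g])\circ[\beta]$
by directly unwinding the definition of $\lambda_{n+1,n+1}$ via the Pontryagin--Thom construction applied to a tubular neighborhood of $g(S^{n+1})$, and observing that the map $f\circ\beta: S^{n+1}\to M$ factors through $f(S^n)$, so that its composite with the Pontryagin--Thom collapse $T_g$ is literally the composite of $\beta$ with $T_g\circ f$ up to homotopy. First I would fix transversal representatives: by Theorem~\ref{thm: Haefliger embedding} (and the discussion following Proposition~\ref{prop: hurewicz intersection}) we may isotope $g$ so that $g(S^{n+1})$ meets $f(S^n)$ transversally, which in dimension $2n+1$ with $\dim f(S^n)+\dim g(S^{n+1}) = 2n+1$ means they meet in finitely many points; in particular $g(S^{n+1})$ is disjoint from all of $f(S^n)$ except near those points. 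I would then choose the closed neighborhood $U\subset S^{n+1}$ and the tubular neighborhood $N_g$, $V_0 = \pi^{-1}(g(U))$, $V_1 = \pi^{-1}(g(S^{n+1}-\Int U))$ as in the definition of $\lambda_{n+1,n+1}$, and recall that
$\lambda_{n+1,n+1}([f\circ\beta],[g]) = (T_g)_*\circ i_*^{-1}[f\circ\beta]$,
where $i: M - \Int(V_0)\hookrightarrow M$ and $T_g: M-\Int(V_0)\to S^n$ is the Pontryagin--Thom map of $\pi|_{V_1}$.

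The key point is then a factorization of $[f\circ\beta]$ through $\pi_n(M-\Int V_0)$. Since $g$ is transversal to $f(S^n)$ and $V_0$ is a small disk around one intersection point (or, more carefully, we arrange $V_0$ to avoid $f(S^n)$ entirely by choosing the disk $U$ away from intersection points, using that $i_*$ is an isomorphism regardless of which disk we pick), the embedding $f$ lifts to an embedding $\widetilde f: S^n\to M - \Int(V_0)$ with $i\circ\widetilde f \simeq f$. Consequently $i_*^{-1}[f\circ\beta] = [\widetilde f\circ\beta] = [\widetilde f]\circ[\beta]$ in $\pi_{n+1}(M-\Int V_0)$, where $[\beta]\in\pi_{n+1}(S^n)$ acts by precomposition. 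Applying $(T_g)_*$ and using functoriality of composition of homotopy classes, we get
$(T_g)_*([\widetilde f]\circ[\beta]) = \big((T_g)_*[\widetilde f]\big)\circ[\beta] = \big((T_g)_*\circ i_*^{-1}[f]\big)\circ[\beta] = \lambda_{n,n+1}([f],[g])\circ[\beta],$
which is exactly the claimed identity. The identity $(T_g)_*(\alpha\circ[\beta]) = ((T_g)_*\alpha)\circ[\beta]$ is just the associativity of composition in the homotopy category: if $\alpha: S^n\to M-\Int V_0$ and $\beta: S^{n+1}\to S^n$, then $T_g\circ(\alpha\circ\beta) = (T_g\circ\alpha)\circ\beta$ on the nose.

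The main obstacle I anticipate is the careful bookkeeping around the neighborhood $V_0$ and the lift $\widetilde f$: one must make sure that the disk $U\subset S^{n+1}$ used to define the Pontryagin--Thom map for $g$ can be taken so that $\pi^{-1}(g(U)) = V_0$ is disjoint from $f(S^n)$, so that $f$ genuinely lifts into the complement $M-\Int(V_0)$; this requires knowing that $g(S^{n+1})\cap f(S^n)$ is a compact set avoiding $g(U)$ for suitable $U$, which follows from transversality once we place $U$ in the complement of the finitely many intersection points (if there are no intersection points both sides are zero and there is nothing to prove). A secondary subtlety is checking that the well-definedness clause already noted after equation~(\ref{eq: pontryagin thom})---that $[T_g]$ is independent of the trivialization---guarantees the whole computation descends to isotopy classes, so that the left side $\lambda_{n+1,n+1}([f\circ\beta],[g])$ is computed correctly by our chosen transversal representative. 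Neither of these is a serious difficulty; the proof is essentially a diagram-chase through the definitions, and I would present it at that level of detail.
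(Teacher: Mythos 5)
Your proof is correct. Note that the paper itself offers no proof of this proposition---it is stated as a citation to Wall---so your argument is supplying something the text omits, and it is indeed the standard unwinding of the definition: both sides are computed with the \emph{same} collapse map $T_g$ (the tubular neighborhood is taken around the second argument $g$), and the identity reduces to compatibility of $(T_g)_*\circ i_*^{-1}$ with precomposition by $\beta$.

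One simplification worth noting: the geometric bookkeeping you flag as the ``main obstacle''---arranging transversality, choosing $U$ away from the intersection points, and producing an honest lift $\widetilde f$ into $M-\Int(V_0)$---is not actually needed. Since $i$ and $T_g$ are continuous maps, the induced maps $i_*$ and $(T_g)_*$ commute with the operation $x\mapsto x\circ[\beta]$ on homotopy groups (this is just associativity of composition of maps, as you observe for $T_g$), and therefore so does the inverse isomorphism $i_*^{-1}$: from $i_*(x\circ[\beta])=(i_*x)\circ[\beta]$ one gets $i_*^{-1}([f]\circ[\beta])=(i_*^{-1}[f])\circ[\beta]$ without ever exhibiting a representative of $i_*^{-1}[f]$ that misses $V_0$. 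The whole proof is then the one-line chain
$(T_g)_*i_*^{-1}([f]\circ[\beta])=\bigl((T_g)_*i_*^{-1}[f]\bigr)\circ[\beta]$,
valid for an arbitrary homotopy class $[f]$ and an arbitrary choice of $U$. Your lift $\widetilde f$ is a perfectly correct way to see the same thing, just heavier than necessary; if you keep it, the only caveat is that transversality of $f$ and $g$ is used solely to guarantee $g(U)\cap f(S^n)=\emptyset$ for a suitable disk $U$, and for that a generic choice of $U$ suffices (the intersection is a proper compact subset of $g(S^{n+1})$ by a dimension count), so the appeal to Haefliger's theorem is superfluous here.
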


We will use these maps to deduce important information about the structure of $H$ and $W_{g,1}$. 

\subsection{Calculations} \label{The Homology} 
We now compute the values of $\lambda_{n, n+1}$, $\lambda_{n+1, n+1}$, $\alpha_{n}$, and $\alpha_{n+1}$ on the manifolds $W_{g,1}$. 
Recall from the previous section the space, 
$$H := W_{1,1}\cup_{\beta}[0,1]\times D^{2n}$$ 
and the core $C \subset H$ used in Definition \ref{defn: the embedding complex}.  

Denote by $\iota_{n}$ and $\iota_{n+1}$ the standard generators of $\pi_{n}(S^{n})$ and $\pi_{n+1}(S^{n+1})$ respectively. Recall that 
$$\pi_{n+1}(S^{n}) \cong  \begin{cases}
\Z & \text{if $n = 2$}\\
\Z/2 & \text{if $n \geq 3$.}
\end{cases}$$
We then set 
$\rho_{n+1} \in \pi_{n+1}(S^{n})$ 
to be the generator represented by the $(n-2)$-fold suspension of the \text{Hopf-Fibration} $S^{3} \rightarrow S^{2}$. 
Letting,
\begin{equation}  \label{representative embeddings}
j_{n}: S^{n} \hookrightarrow C,  \quad  j_{n+1}: S^{n+1} \hookrightarrow C, \quad  \text{and} \quad 
i_{C}: C \hookrightarrow H
\end{equation}
denote the inclusions, we set
\begin{equation}
\begin{aligned}
e := (i_{C}\circ j_{n})_{*}(\iota_{n}) \; \in \pi_{n}(H), \quad \eta := (i_{C}\circ j_{n+1})_{*}(\iota_{n+1}) \; \in \pi_{n+1}(H),\\
\end{aligned}
\end{equation}
and then
\begin{equation} \mu \; := e \; \circ \rho_{n+1} \in \pi_{n+1}(H) \end{equation}
where, with some abuse of notation, $e \circ \rho_{n+1}$ is the homotopy class of the composition of maps representing $e$ and $\rho_{n+1}$. It is easy to see that
$$\pi_{n}(H) = \langle e \rangle \cong \Z  \quad  \text{and} \quad   \pi_{n+1}(H) = \langle \eta, \;  \mu \rangle \cong \Z\oplus \Z/2.$$

We are now in a position to compute the values of $\lambda_{n, n+1}$, $\lambda_{n+1, n+1}$, $\alpha_{n}$, and $\alpha_{n+1}$ on the homotopy groups of $H$. 

\begin{proposition} \label{prop: intersection pairing 1} On the homotopy groups of $H$, $\lambda_{n, n+1}$, $\lambda_{n+1, n+1}$, $\alpha_{n}$, and $\alpha_{n+1}$ take the following values: \\
\begin{equation}
\xymatrix@R-1pc{
 \alpha_{n}(e) = 0 &  \alpha_{n+1}(\eta) = 0 & \alpha_{n+1}(\mu) = 0\\
\lambda_{n, n}(e, e) = 0  & \lambda_{n+1, n+1}(\eta, \eta) = 0  & \lambda_{n+1, n+1}(\mu, \mu) = 0\\
 \lambda_{n, n+1}(e, \eta) = \iota_{n} &  \lambda_{n+1, n+1}(\eta, \mu) = \rho_{n+1} & \lambda_{n, n+1}(e, \mu) = 0. }
\end{equation}
\end{proposition}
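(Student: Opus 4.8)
The plan is to compute each of the nine values by exploiting the explicit geometric description of $H$ as $W_{1,1}\cup_\beta [0,1]\times D^{2n}$, its deformation retract $C\simeq S^n\vee S^{n+1}$, and the representative embeddings $j_n, j_{n+1}$ together with the composition formula $\alpha_{r}([g\circ f]) = F(\alpha_t([g]),[f])$ from (\ref{eq: bundle composition formula}) and Proposition \ref{prop: lambda composition}. First I would dispose of the $\alpha$-values: since $S^n\times S^{n+1}$ is parallelizable, the sphere $S^n\times\{b_0\}$ has trivial normal bundle in $S^n\times S^{n+1}$, hence in $W_{1,1}$ and in $H$, so $\alpha_n(e)=0$; likewise $\{a_0\}\times S^{n+1}$ has trivial normal bundle, giving $\alpha_{n+1}(\eta)=0$. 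For $\alpha_{n+1}(\mu)$ with $\mu = e\circ\rho_{n+1}$, I would apply (\ref{eq: bundle composition formula}) to get $\alpha_{n+1}(\mu) = F(\alpha_n(e),\rho_{n+1}) = F(0,\rho_{n+1})$, and observe that the zero element of $\pi_{n-1}(SO_{n+1})$ classifies the trivial bundle $S^n\times D^{n+1}$, in which the embedded $S^{n+1}$ obtained from $\rho_{n+1}$ again has trivial normal bundle (it lives in a contractible chart of the total space up to the section, or more carefully $F(0,y)=0$ for all $y$ since the bundle is a product); hence $\alpha_{n+1}(\mu)=0$.

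Next I would handle the self-intersection pairings. By Theorem \ref{thm: Wall intersection formulas}, $\lambda_{n+1,n+1}(\eta,\eta) = S\pi\alpha_{n+1}(\eta) = S\pi(0) = 0$, and similarly $\lambda_{n+1,n+1}(\mu,\mu) = S\pi\alpha_{n+1}(\mu)=0$. For $\lambda_{n,n}(e,e)$ one can argue that $S^n\times\{b_0\}$ can be isotoped off itself inside $S^n\times S^{n+1}$ (push in the $S^{n+1}$-direction to $S^n\times\{b_1\}$ with $b_1\neq b_0$), so the signed self-intersection number is zero; by Proposition \ref{prop: hurewicz intersection}-type reasoning this gives $\lambda_{n,n}(e,e)=0$. (Strictly $\lambda_{n,n}$ was only defined for $r\le t$ ranges, but the intersection-number interpretation still applies.)

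For the cross terms I would use the geometric/Pontryagin–Thom interpretation of $\lambda$ directly. The embeddings representing $e = [S^n\times\{b_0\}]$ and $\eta = [\{a_0\}\times S^{n+1}]$ meet transversally in the single point $(a_0,b_0)$ inside $S^n\times S^{n+1}$, and this point survives into $W_{1,1}\subset H$ by condition (a); hence by Proposition \ref{prop: hurewicz intersection} the signed intersection number is $\pm 1$ and $\lambda_{n,n+1}(e,\eta)=\iota_n$ (after fixing orientations so the sign is $+1$). For $\lambda_{n+1,n+1}(\eta,\mu)$ write $\mu = e\circ\rho_{n+1}$; Proposition \ref{prop: lambda composition} gives $\lambda_{n+1,n+1}([j_n\circ\rho_{n+1}],[j_{n+1}]) = \lambda_{n,n+1}([j_n],[j_{n+1}])\circ[\rho_{n+1}] = \iota_n\circ\rho_{n+1} = \rho_{n+1}$. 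Finally $\lambda_{n,n+1}(e,\mu)$: again by Proposition \ref{prop: lambda composition} (with the roles set up so $\mu$ is the second slot), or more directly, $\lambda_{n,n+1}(e,\mu)$ lands in $\pi_n(S^n)\cong\Z$ and equals $k\cdot\iota_n$ where $k$ is the signed intersection number of an $S^n$ with an $S^{n+1}$ representing $\mu$; since $\mu$ is a torsion class while the intersection number is an integer-valued bilinear pairing, $k$ must be $0$ (an integer killed by $2$ is $0$), so $\lambda_{n,n+1}(e,\mu)=0$.

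The main obstacle I expect is making the normal-bundle computations for $\alpha_n(e)$, $\alpha_{n+1}(\eta)$, and especially $\alpha_{n+1}(\mu)$ fully rigorous: one must be careful that the relevant spheres, as embedded in $H$ (not just in $S^n\times S^{n+1}$ or in an abstract bundle total space), genuinely have trivial normal bundles, and that attaching the handle $[0,1]\times D^{2n}$ and passing to the core $C$ does not change the normal bundle data — this follows because $C$ is a deformation retract of $H$ and the spheres sit in the $S^n\times S^{n+1}$-part away from the handle, but it deserves a careful sentence. The $F(0,y)=0$ claim also needs the observation that the disk bundle classified by $0$ is literally a product $S^n\times D^{n+1}$, so any embedded sphere pushed off the zero section into a single fibre (which is contractible) has trivial normal bundle; this is where Lemma \ref{lemma: linear bundle comp} and the dimensional hypotheses guaranteeing the embedding $\widehat{f}$ exists are used.
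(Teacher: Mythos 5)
Your proposal follows essentially the same route as the paper: trivial normal bundles for $\alpha_{n}(e)$ and $\alpha_{n+1}(\eta)$, the composition formula (\ref{eq: bundle composition formula}) for $\alpha_{n+1}(\mu)$, Theorem \ref{thm: Wall intersection formulas} for the two self-pairings, a single transversal intersection point for $\lambda_{n,n+1}(e,\eta)$, and Proposition \ref{prop: lambda composition} for $\lambda_{n+1,n+1}(\eta,\mu)$. Two small remarks. First, for $\lambda_{n,n}(e,e)$ you do not need any isotopy: the target group is $\pi_{n}(S^{(2n+1)-n})=\pi_{n}(S^{n+1})=0$, which is the paper's (one-line) argument; and your torsion/bilinearity argument for $\lambda_{n,n+1}(e,\mu)=0$ is correct and in fact fills in a case the paper's proof passes over in silence. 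Second, and more importantly, your primary justification that $F(0,\rho_{n+1})=0$ is not sound as stated: the embedded sphere $\widehat{f}(S^{n+1})\subset S^{n}\times D^{n+1}$ represents the nonzero class $\rho_{n+1}\in\pi_{n+1}(S^{n})$, so it cannot be pushed into a single fibre or into a contractible chart, and it is not clear that $F(0,y)=0$ for arbitrary $y$ merely because the bundle is a product (if it were, the suspension hypothesis in Lemma \ref{lemma: linear bundle comp} would be superfluous). The correct justification is the one you mention as a fallback and which the paper uses: $\rho_{n+1}$ is an $(n-2)$-fold suspension, so by Lemma \ref{lemma: linear bundle comp} the map $x\mapsto F(x,\rho_{n+1})$ is a homomorphism and hence kills $0$. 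With that substitution your argument is complete.
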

\begin{proof} The calculations $\alpha_{n}(e) = 0$ and $\alpha_{n+1}(\eta) = 0$ follow from the fact that the embeddings $i_{C}\circ j_{e}$ and $i_{C}\circ j_{\eta}$ from (\ref{representative embeddings}) which represent the classes $e$ and $\eta$ have trivial normal bundles. 
The calculation $\lambda_{n, n+1}(e, \eta) = \iota_{n}$ follow from the fact that the embeddings $i_{C}\circ j_{e}$ and $i_{C}\circ j_{\eta}$ intersect transversally at exactly one point. 

To see that $\alpha_{n+1}(\mu) = 0$ we recall that $\mu$ is defined by a composition, namely $\mu = e \circ \rho_{n+1}$. From (\ref{eq: bundle composition formula}) we see that $\alpha_{n+1}(e \circ \rho_{n+1}) = F(\; \alpha_{n}(e) \;, \; \rho_{n+1}) = F(0, \; \rho_{n+1})$. Since $n$ is assumed to be greater than or equal to $4$, the element $\rho_{n+1}$ is a suspension, namely the $(n-2)$-fold suspension of the \textit{Hopf fibration}. From this, Lemma \ref{lemma: linear bundle comp} implies that $F(\;  \cdot \;, \; \rho_{n+1})$ is linear in the first variable, hence $F(0, \rho_{n+1}) = 0$, and so $\alpha_{n+1}(e \circ \rho_{n+1}) \; = \; 0$.

The calculation $\lambda_{n,n}(e, e) = 0$ follows trivially from the fact that $\pi_{n}(S^{n+1}) = 0$. The calculation $\lambda_{n+1, n+1}(\eta, \eta) = 0$ follows from the fact that $\alpha_{n+1}(\eta) = 0$, using Theorem \ref{thm: Wall intersection formulas}. The calculation $\lambda_{n+1, n+1}(\mu, \mu) = 0$ follows from $\alpha_{n+1}(\mu) = 0$ using Theorem \ref{thm: Wall intersection formulas} as well. 

Finally, to compute $\lambda_{n+1, n+1}(\eta, \mu)$ we use Proposition \ref{prop: lambda composition} to write,
$$\lambda_{n+1, n+1}(\eta, \mu) \; = \; \lambda_{n+1, n+1}(\eta, e \circ \rho_{n+1}) \; =\;  \lambda_{n+1, n}(\eta, e)\circ \rho_{n+1} \; =\;  \iota_{n}\circ \rho_{n+1} \; = \; \rho_{n+1}.$$ \end{proof}

We now compute the values of these invariants on $\pi_{k}(W_{g,1})$. Since $W_{g, 1}$ is diffeomorphic to the boundary connect sum of $g$-copies of $W_{1, 1}$ (or equivalently $H$), we see that 
$$\pi_{n}(W_{g}) \cong (\Z)^{\oplus g} \quad \text{and} \quad \pi_{n+1}(W_{g,1}) \; \cong \; \Z^{\oplus g}\oplus (\Z/2)^{\oplus g}.$$
Fix a $(g-1)$ simplex, 
\begin{equation} \label{eq: standard reference embeddings} \{(\psi_{0}, t_{0}), \dots, (\psi_{g-1}, t_{g-1})\} \in K^{c}(W_{g,1}). 
\end{equation}
Such a $(g-1)$-simplex exists since the manifold $W_{g,1}$ is by construction diffeomorphic to the boundary connected sum of $g$-many copies of $H$.  For $i = 0, \dots, g-1$, we set
$$e_{i} := (\psi_{i})_{*}(e) \in \pi_{n}(W_{g,1}), \quad \eta_{i} := (\psi_{i})_{*}(\eta) \in \pi_{n+1}(W_{g,1}), \quad \mu_{i} := (\psi_{i})_{*}(\mu) \in \pi_{n+1}(W_{g,1}).$$
%\newpage
%
\begin{proposition} \label{prop: intersection pairings 2} On the homotopy groups of $W_{g,1}$, the maps $\lambda_{r, t}$ and $\alpha_{r}$ take the following values: \\
\begin{equation}
\xymatrix@R-1pc{
\alpha_{n}(e_{i}) = 0 & \alpha_{n+1}(\eta_{i}) = 0 & \alpha_{n+1}(\mu_{i}) = 0\\
\lambda_{n, n+1}(e_{i}, \eta_{j}) = \delta_{i,j}\cdot \iota_{n} &  \lambda_{n, n}(e_{i}, e_{j}) = 0  & \lambda_{n+1, n+1}(\eta_{i}, \eta_{j}) = 0  \\
\lambda_{n+1, n+1}(\eta_{j}, \mu_{i}) = \delta_{i,j}\cdot\rho_{n+1} &  \lambda_{n+1, n+1}(\mu_{i}, \mu_{j}) = 0  & \lambda_{n, n+1}(e_{i}, \mu_{j}) = 0.
}
\end{equation}
\end{proposition}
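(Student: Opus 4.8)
The plan is to deduce Proposition~\ref{prop: intersection pairings 2} from Proposition~\ref{prop: intersection pairing 1} by pushing forward along the embeddings $\psi_{i} : H \hookrightarrow W_{g,1}$ and exploiting the disjointness of their cores. Recall that the classes $e_{i}, \eta_{i}, \mu_{i}$ are by definition $(\psi_{i})_{*}$ of the classes $e, \eta, \mu \in \pi_{*}(H)$, and that $e$ and $\eta$ are represented inside $C \subset H$ by the embedded spheres $i_{C}\circ j_{n}$ and $i_{C}\circ j_{n+1}$, while $\mu = e\circ\rho_{n+1}$.

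First I would handle the ``diagonal'' entries, i.e.\ the values of $\lambda_{r,t}$ and $\alpha_{r}$ on pairs with equal index $i = j$. These follow immediately from naturality: $\alpha_{r}$ is defined via the normal bundle of a representing embedding, and an embedding representing $(\psi_{i})_{*}(x)$ is obtained by composing a representing embedding for $x$ with the embedding $\psi_{i}$; since $\psi_{i}$ has trivial normal bundle (it is a codimension-zero embedding of a manifold with boundary, or more precisely the relevant framing is inherited), the normal bundle of the composite in $W_{g,1}$ agrees with the normal bundle inside $H$. Hence $\alpha_{n}(e_{i}) = \alpha_{n}(e) = 0$, $\alpha_{n+1}(\eta_{i}) = 0$, and $\alpha_{n+1}(\mu_{i}) = 0$. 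For the intersection pairings $\lambda_{r,t}$, the Pontryagin--Thom construction defining $\lambda_{r,t}$ is local near the tubular neighborhoods of the representing spheres, so $\lambda_{n,n+1}(e_{i},\eta_{i})$, $\lambda_{n,n}(e_i,e_i)$, $\lambda_{n+1,n+1}(\eta_{i},\eta_{i})$, $\lambda_{n+1,n+1}(\eta_{i},\mu_{i})$, and $\lambda_{n+1,n+1}(\mu_{i},\mu_{i})$ all equal the corresponding quantities computed in Proposition~\ref{prop: intersection pairing 1}, namely $\iota_{n}$, $0$, $0$, $\rho_{n+1}$, and $0$ respectively. Alternatively the $\lambda_{r+1}$-diagonal identities can be re-derived from the $\alpha$-vanishing using Theorem~\ref{thm: Wall intersection formulas}.

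Next I would treat the ``off-diagonal'' entries, $i \neq j$, which is where the key geometric input enters. The representing embedded spheres for $e_{i}, \eta_{i}, \mu_{i}$ can all be taken to lie in $\psi_{i}(C)$, an arbitrarily small neighborhood of which can be chosen disjoint from $\psi_{j}(C)$ and its neighborhood when $i\neq j$, using the isotopy $\rho_{t}$ from Section~\ref{The Complex of Embedded Handles} (condition (ii)) to shrink representatives into disjoint neighborhoods of the cores. Therefore representing embeddings for the $i$-th and $j$-th classes can be made disjoint, which forces the signed intersection number to vanish; by Proposition~\ref{prop: hurewicz intersection} this gives $\lambda_{n,n+1}(e_{i},\eta_{j}) = 0$ for $i\neq j$. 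Since also $\pi_{n}(S^{n+1}) = 0$ we get $\lambda_{n,n}(e_{i},e_{j}) = 0$ for all $i,j$. The vanishing $\lambda_{n+1,n+1}(\eta_{i},\mu_{j}) = 0$ and $\lambda_{n+1,n+1}(\mu_{i},\mu_{j}) = 0$ for $i \neq j$ follow the same way: disjoint representatives make the Wells invariant (equivalently the Pontryagin--Thom class defining $\lambda_{n+1,n+1}$) zero. For the mixed term $\lambda_{n,n+1}(e_{i},\mu_{j})$ one can either argue by disjointness again when $i\neq j$, or for $i = j$ invoke Proposition~\ref{prop: lambda composition}: $\lambda_{n,n+1}(e_i, \mu_i) = \lambda_{n,n+1}(e_i, e_i\circ\rho_{n+1}) = \lambda_{n,n}(e_i,e_i)\circ\rho_{n+1} = 0$ since $\lambda_{n,n}(e_i,e_i) = 0$.

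The main obstacle is making precise that representatives can be shrunk into disjoint neighborhoods of the cores while still computing the \emph{same} algebraic invariants; this requires that $\lambda_{r,t}$ and $\alpha_{r}$ be isotopy invariants (which we have, via Haefliger's Theorem~\ref{thm: Haefliger embedding} and the remarks following the definition of $\lambda_{r,t}$ and the Wells remark) and that the homotopy classes $e_i$, $\eta_i$, $\mu_i$ indeed admit representatives supported in $\psi_i(C)$. The latter is immediate since $C$ is a deformation retract of $H$ and $e, \eta$ are already represented by spheres in $C$ while $\mu = e\circ\rho_{n+1}$ is represented by a map of $S^{n+1}$ whose image lies in (a small neighborhood in $H$ of) the sphere representing $e$, hence in $\psi_i(C)$ after applying the retraction; then $\rho_t$ isotopes everything into disjoint neighborhoods. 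Once this localization principle is in hand, every entry of the table reduces to either Proposition~\ref{prop: intersection pairing 1}, a dimension-zero homotopy-group vanishing, or an algebraic consequence of Theorem~\ref{thm: Wall intersection formulas} and Proposition~\ref{prop: lambda composition}.
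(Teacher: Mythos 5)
Your proposal is correct and follows essentially the same route as the paper, whose proof is a one-line appeal to Proposition \ref{prop: intersection pairing 1} for the diagonal ($i=j$) entries together with the disjointness $\psi_{i}(C)\cap\psi_{j}(C)=\emptyset$ for the off-diagonal ones. You have simply spelled out the naturality and localization arguments that the paper leaves implicit.
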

\begin{proof} This follows directly from the previous proposition combined with the fact that the intersection 
$\psi_{i}(C)\cap\psi_{j}(C) = \emptyset$ for all $i, j \in \{0, \dots g-1\}$. 
\end{proof}

\begin{comment}
Notice that since $n \geq 4$, $\pi_{n}(H)$ and $\pi_{n+1}(H)$ are in the stable range for homotopy groups. It follows that the map
$$\pi_{n}(H) \longrightarrow \pi_{n+1}(H)$$
given by $x \mapsto x\circ\rho_{n+1}$ is a homomorphism. With abuse of notation, we denote this map by
$$\rho_{n+1}: \pi_{n}(H) \longrightarrow \pi_{n+1}(H).$$
\end{comment}

\subsection{Wall Pairings} \label{subsection: Wall Pairings}
We now formalize the algebraic structure given by the $5$-tuple, 
$$(\pi_{n}(H), \; \pi_{n+1}(H), \; \lambda_{n, n+1}, \; \lambda_{n+1, n+1}, \; \alpha_{n+1}).$$
For an integer $g \geq 0$, let $X$ and $Y$ be $\Z$-modules (not necessarily free) of rank $g$. Let 
$$\lambda:  X\times Y \longrightarrow  \pi_{n}(S^{n}), \quad  q: Y\times Y \longrightarrow \pi_{n+1}(S^{n}), \quad \alpha: Y \longrightarrow \pi_{n}(SO_{n})$$
be maps such that $\lambda$ is bilinear, $q$ is symmetric bilinear,  and $\alpha$ is a function (not necessarily  a homomorphism) that satisfies the relation
\begin{equation} \label{eq: alpha summation formula} \alpha(a + b) = \alpha(a) + \alpha(b) + \partial (q(a, b)) \; \; \text{for all $a, b \in Y$}. \end{equation}

The main example of such maps arise by setting $X := \pi_{n}(M)$ and $Y := \pi_{n+1}(M)$ for $M$ an $(n-1)$-connected, $(2n+1)$-dimensional manifold, and setting the maps $\lambda, q,$ and $\alpha$ equal to  $\lambda_{n, n+1}, \lambda_{n+1, n+1},$ and $\alpha_{n+1}$  as defined in Section \ref{Intersection Products and Vector Bundles} (we are ignoring $\alpha_{n}$ here since in our case of interest it is identically zero).
We now impose further conditions on $X,\;  Y, \; \lambda, \; q,$ and $\alpha$.

\begin{defn} \label{defn: wall pairing} Let $X,\;  Y, \; \lambda, \; q,$ and $\alpha$ be as defined above with 
$$X \cong \Z^{\oplus g} \quad \text{and} \quad Y \cong \Z^{\oplus g}\oplus (\Z/2)^{\oplus g}.$$ 
Suppose further that there exist bases
$$(x_{1}, \dots, x_{g}) \quad \text{and} \quad (y_{1}, \dots, y_{g}, z_{1}, \dots, z_{g})$$
of $X$ and $Y$ respectively with $(z_{1}, \dots, z_{g})$ a basis for the $\Z/2$-component of $Y$, such that for all $i, j \in \{1, \dots, g\}$ the following conditions are satisfied:
\begin{enumerate}
 \item[i.]  $\lambda(x_{i}, y_{j}) = \delta_{i, j}\cdot\iota_{n}$ and  $\lambda(x_{i}, z_{j}) = 0$,
 \item[ii.] $q(y_{i}, z_{j}) = q(z_{j}, y_{i}) = \delta_{i, j}\cdot\rho_{n+1}$ and  $q(y_{i}, y_{j}) = q(z_{i}, z_{j}) = 0$, 
\item[iii.] $\alpha(y_{i}) = \alpha(z_{i}) = 0$.
\end{enumerate}
These conditions determine the values of the functions $\lambda, q$, and $\alpha$.
With these conditions satisfied,
the $5$-tuple
$(X, \; Y, \; \lambda, \; q, \; \alpha)$
is said to be a \textit{Wall pairing} of rank $g$. 
\end{defn}
It follows immediately from the calculation of Proposition \ref{prop:
  intersection pairing 1} that
\begin{equation} \label{eq: elementary wall pairing} (\pi_{n}(H), \; \pi_{n+1}(H), \; \lambda_{n, n+1}, \; \lambda_{n+1, n+1}, \; \alpha_{n+1}) \end{equation}
is a Wall pairing of rank $1$.  It follows from Proposition \ref{prop: intersection pairing 1} that the bases $(e)$ and $(\eta, \mu)$, and  maps  $\lambda_{n, n+1}, \lambda_{n+1, n+1}$, and $\alpha_{n+1}$, satisfy of the conditions put forth the Definition \ref{defn: wall pairing}.
It then follows from Proposition \ref{prop: intersection pairings 2} that 
$$(\pi_{n}(W_{g,1}), \; \pi_{n+1}(W_{g,1}), \; \lambda_{n, n+1}, \; \lambda_{n+1, n+1}, \; \alpha_{n+1})$$
is a \text{Wall pairing} of rank $g$.

Homorphisms and direct sums of Wall pairings are defined in the obvious way. 
We state without proof the easy to verify but important proposition:
\begin{proposition} \label{prop: rank 1 wall} Any Wall pairing of rank one is isomorphic to the Wall pairing
(\ref{eq: elementary wall pairing}).  Furthermore, any \text{Wall
    pairing} of rank $g$ is isomorphic to the $g$-fold direct sum of
  the Wall pairings (\ref{eq: elementary wall
    pairing}). \end{proposition}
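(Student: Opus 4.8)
The plan is to reduce the statement to pure linear/module algebra, using the defining conditions of a Wall pairing rather than the underlying geometry. Let $(X, Y, \lambda, q, \alpha)$ be a Wall pairing of rank $g$, and fix bases $(x_1,\dots,x_g)$ of $X$ and $(y_1,\dots,y_g,z_1,\dots,z_g)$ of $Y$ as provided by Definition \ref{defn: wall pairing}, with $(z_i)$ spanning the torsion component. The key observation, stated in the last sentence of the definition, is that conditions (i), (ii), (iii) \emph{determine} $\lambda$, $q$, and $\alpha$ completely on these bases: indeed $\lambda$ and $q$ are bilinear so their values on all of $X\times Y$ and $Y\times Y$ are fixed by their values on basis pairs, and $\alpha$ is determined on all of $Y$ by its values on the basis together with the summation formula (\ref{eq: alpha summation formula}) $\alpha(a+b) = \alpha(a)+\alpha(b)+\partial(q(a,b))$, since every element of $Y$ is an integer combination of the $y_i$ and $z_i$ and the formula lets one compute $\alpha$ of any sum inductively from $\alpha$ on summands and $q$.

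First I would treat the rank-one case. Given any Wall pairing $(X,Y,\lambda,q,\alpha)$ of rank one with distinguished bases $(x_1)$ and $(y_1,z_1)$, define the map to the model pairing (\ref{eq: elementary wall pairing}) $(\pi_n(H), \pi_{n+1}(H), \lambda_{n,n+1}, \lambda_{n+1,n+1}, \alpha_{n+1})$ by sending $x_1 \mapsto e$, $y_1 \mapsto \eta$, $z_1 \mapsto \mu$, and extending $\Z$-linearly; this is well-defined since $(e)$ and $(\eta,\mu)$ are bases of $\pi_n(H)\cong\Z$ and $\pi_{n+1}(H)\cong\Z\oplus\Z/2$ with $\mu$ spanning the torsion, by Proposition \ref{prop: intersection pairing 1} and the discussion preceding it. It is an isomorphism of $\Z$-modules in each slot. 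That it intertwines the pairings and $\alpha$-maps is then immediate on basis elements by comparing conditions (i)--(iii) of Definition \ref{defn: wall pairing} against the values computed in Proposition \ref{prop: intersection pairing 1}; and because both sides' structure maps are determined by their values on the bases (bilinearity for $\lambda,q$; bilinearity plus the summation formula for $\alpha$, which both $\alpha$ and $\alpha_{n+1}$ satisfy — the latter by Theorem \ref{thm: Wall intersection formulas}), agreement on bases forces agreement everywhere. Hence any rank-one Wall pairing is isomorphic to (\ref{eq: elementary wall pairing}).

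For general rank $g$, I would exhibit the isomorphism with the $g$-fold direct sum directly. Given $(X,Y,\lambda,q,\alpha)$ of rank $g$ with bases as above, split $X = \bigoplus_{i=1}^g \langle x_i\rangle$ and $Y = \bigoplus_{i=1}^g \big(\langle y_i\rangle \oplus \langle z_i\rangle\big)$, and let the $i$-th summand pairing be $X_i := \langle x_i\rangle$, $Y_i := \langle y_i\rangle\oplus\langle z_i\rangle$ with $\lambda, q, \alpha$ restricted. By conditions (i)--(iii), $\lambda(x_i, y_j)$, $\lambda(x_i,z_j)$, $q(y_i,y_j)$, $q(y_i,z_j)$, $q(z_i,z_j)$ all vanish when $i\neq j$, so $\lambda$ and $q$ are block-diagonal with respect to this splitting, and each block $(X_i, Y_i, \lambda|, q|, \alpha|)$ is a Wall pairing of rank one. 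For $\alpha$: the summation formula shows that for $y = \sum(a_i y_i + b_i z_i)$ one has $\alpha(y) = \sum_i \alpha(a_i y_i + b_i z_i) + \sum_{(i,\epsilon)\neq(j,\epsilon')}\partial(q(\cdot,\cdot))$, and the cross terms vanish because $q$ is block-diagonal — so $\alpha$ is the "direct sum" of the $\alpha|$ in the appropriate sense. Thus $(X,Y,\lambda,q,\alpha) \cong \bigoplus_{i=1}^g (X_i,Y_i,\lambda|,q|,\alpha|)$, and by the rank-one case each summand is isomorphic to (\ref{eq: elementary wall pairing}), giving the claim.

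The argument is essentially bookkeeping, so there is no serious obstacle; the one point requiring slight care — and the reason the paper calls it "easy to verify" rather than trivial — is checking that $\alpha$ really is determined by its basis values and behaves additively across the direct-sum decomposition. This hinges on the summation formula (\ref{eq: alpha summation formula}) and the vanishing of the off-diagonal $q$-values, i.e. on the fact that $\partial$ annihilates the cross terms because those $q$-values are already zero. Once that is noted, everything else follows from bilinearity and the explicit basis conditions of Definition \ref{defn: wall pairing}.
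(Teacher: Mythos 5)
Your argument is correct, and it is precisely the verification the paper omits: the paper states this proposition explicitly without proof (``We state without proof the easy to verify but important proposition''), and your route --- map distinguished bases to distinguished bases, then use bilinearity of $\lambda$ and $q$ together with the summation formula (\ref{eq: alpha summation formula}) to see that all structure maps are determined by their basis values, with block-diagonality of $\lambda$ and $q$ handling the rank-$g$ decomposition --- is the intended one. The one point you rightly flag, that the cross terms in $\alpha$ vanish because the off-diagonal values of $q$ are already zero, is exactly the content of the ``easy to verify'' remark, so nothing is missing.
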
 By this proposition we are justified
in thinking of a rank $g$ Wall pairing as an algebraic version of the
manifold $W_{g,1}$.

Let $(X, \; Y, \; \lambda, \; q, \; \alpha)$ be a Wall pairing of rank $g$. It follows from Definition \ref{defn: wall pairing} that 
$\lambda: X\times Y \longrightarrow \pi_{n}(S^{n})$ is a \textit{perfect pairing} , i.e. the map 
$$X \longrightarrow \Hom(Y, \pi_{n}(S^{n})), \quad x \mapsto
\lambda(x, \; \cdot \;)$$ is an isomorphism. The bilinear pairing $q:
Y\times Y \longrightarrow \pi_{n+1}(S^{n})$ enjoys a similar property.
\begin{proposition} Let $Y' \subset Y$ be 
any compliment to $\Torsion(Y)$ in $Y$. Then the map 
$$\Torsion(Y) \longrightarrow \Hom(Y', \pi_{n+1}(S^{n})), \quad z
\mapsto q(z, \; \cdot \;)$$ is an isomorphism. \end{proposition}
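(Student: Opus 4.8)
The plan is to reduce the general statement to the ``standard'' complement $C := \langle y_{1}, \dots, y_{g}\rangle \subset Y$ of Definition \ref{defn: wall pairing}, using that the passage between any two complements is controlled by $Y/\Torsion(Y)$.

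First I would record two observations. Since $(z_{1}, \dots, z_{g})$ is a basis of the $\Z/2$-component of $Y$, we have $\Torsion(Y) = \langle z_{1}, \dots, z_{g}\rangle \cong (\Z/2)^{\oplus g}$; and (for $n \geq 4$) $\pi_{n+1}(S^{n}) \cong \Z/2$ is generated by $\rho_{n+1}$. Second, since $q(z_{i}, z_{j}) = 0$ for all $i,j$ and $q$ is bilinear, $q$ vanishes identically on $\Torsion(Y)\times\Torsion(Y)$. Consequently, for each $z \in \Torsion(Y)$ the homomorphism $q(z, -)\colon Y \to \pi_{n+1}(S^{n})$ annihilates $\Torsion(Y)$, hence factors through a homomorphism $\bar{q}_{z}\colon Y/\Torsion(Y) \to \pi_{n+1}(S^{n})$; and $z \mapsto \bar{q}_{z}$ is a homomorphism $\Psi\colon \Torsion(Y) \to \Hom(Y/\Torsion(Y),\pi_{n+1}(S^{n}))$ that does not depend on any choice of complement.

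Now fix a complement $Y' \subset Y$. The quotient map $\pi\colon Y \to Y/\Torsion(Y)$ restricts to an isomorphism $\pi_{Y'}\colon Y' \xrightarrow{\cong} Y/\Torsion(Y)$, and unwinding definitions gives $q(z, -)\vert_{Y'} = \bar{q}_{z}\circ\pi_{Y'}$ for all $z$; that is, the map of the proposition equals $(\pi_{Y'})^{*}\circ\Psi$, where $(\pi_{Y'})^{*}$ denotes the isomorphism $\Hom(Y/\Torsion(Y),\pi_{n+1}(S^{n})) \to \Hom(Y',\pi_{n+1}(S^{n}))$ given by precomposition with $\pi_{Y'}$. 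Hence the map of the proposition is an isomorphism for a given complement if and only if $\Psi$ is an isomorphism, and in particular it is an isomorphism for all complements once it is one for a single complement. It then remains to check this for $C = \langle y_{1}, \dots, y_{g}\rangle$: by conditions (i)--(iii) of Definition \ref{defn: wall pairing}, the map $z \mapsto q(z,-)\vert_{C}$ sends $z_{i}$ to the functional $y_{j} \mapsto \delta_{i,j}\cdot\rho_{n+1}$, and under the identification $\pi_{n+1}(S^{n}) = \Z/2$ via $\rho_{n+1}$ these are exactly the coordinate functionals forming a basis of $\Hom(C,\Z/2) \cong (\Z/2)^{\oplus g}$. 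Thus this homomorphism carries the basis $(z_{1}, \dots, z_{g})$ of $\Torsion(Y)$ to a basis of the target, so it is an isomorphism, and the proposition follows.

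The argument is routine; the only step worth isolating is the vanishing of $q$ on $\Torsion(Y)\times\Torsion(Y)$, which is precisely what lets $q(z,-)$ be recovered from its restriction to any complement and makes the comparison of complements possible.
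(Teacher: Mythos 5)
Your proof is correct and is essentially the paper's argument: both hinge on the vanishing of $q$ on $\Torsion(Y)\times\Torsion(Y)$ to transfer the obvious isomorphism for the standard complement $\langle y_{1},\dots,y_{g}\rangle$ to an arbitrary complement $Y'$. The only cosmetic difference is that you route the comparison through the quotient $Y/\Torsion(Y)$, whereas the paper uses the projection $Y\to Y'$ restricted to $\langle y_{1},\dots,y_{g}\rangle$; these are the same commutative triangle read in opposite directions.
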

\begin{proof} Let $(y_{1}, \dots, y_{g}, z_{1}, \dots, z_{g})$ be a basis of $Y$ which satisfies conditions  i., ii., and iii. in Definition \ref{defn: wall pairing} and denote by $Y''$ the free sub $\Z$-module generated by $(y_{1}, \dots, y_{g})$. Since $\text{Torsion}(Y)$ is generated by $(z_{1}, \dots, z_{g})$, 
it is immediate from Definition \ref{defn: wall pairing} that the map 
$$\text{Torsion}(Y) \longrightarrow \Hom(Y'', \pi_{n+1}(S^{n})), \quad z \mapsto q(z, \; \cdot \;)$$
is an isomorphism. 
Let $\pi: Y \longrightarrow Y'$ be the projection map defined by the direct-sum decomposition, $Y'\oplus\text{Torsion}(Y) = Y$. For dimensional reasons the restriction map, $\pi|_{Y''} : Y'' \longrightarrow Y'$ is an isomorphism.  Since $q(z, z') = 0$ whenever both $z, z'$ are in $\text{Torsion}(Y)$ it follows that 
$$q(z, y) \; = \; q(z, \pi(y)) \quad \text{for all $z \in \text{Torsion}(Y)$ and $y \in Y''$}.$$
This implies that the diagram
$$\xymatrix{
\Torsion (Y) \ar[rr] \ar[drr]_{\cong} && \Hom(Y'', \pi_{n+1}(S^{n})) \ar[d]_{\cong}^{(\pi|_{Y'})^{*}} \\
&& \Hom(Y', \pi_{n+1}(S^{n}))}$$
commutes. This proves the proposition. 
\end{proof}

It follows from the previous proposition that if $(y_{1}, \dots, y_{g})$ is a unimodular list of elements in $Y$ spanning a free direct summand of rank $g$ such that $\alpha(y_{i}) = 0$ for all $i$, then there exist \textit{unique} bases,
$(x_{1}, \dots, x_{g})$ and $(z_{1}, \dots, z_{g})$
for $X$ and $\text{Torsion}(Y)$ respectively such that the lists
$$(x_{1}, \dots, x_{g}) \quad \text{and} \quad (y_{1}, \dots, y_{g}, z_{1}, \dots, z_{g})$$
satisfy conditions i., ii., and iii. in Definition \ref{defn: wall pairing}. This observation leads to the following proposition: 
\begin{proposition} \label{prop: uniqueness of rho} Let $(X, \; Y, \; \lambda, \; q, \; \alpha)$ be a Wall pairing of rank $g$. There exists a unique map 
$$
\rho: X \longrightarrow \mbox{{\rm Torsion}}(Y) 
$$
that satisfies,
\begin{equation} \label{eq: rho map} q(\rho(x), \; y) \; = \; \lambda(x, y)\circ\rho_{n+1} \quad \text{for all $(x, y) \in X\times Y$} \end{equation}
where $\rho_{n+1} \in \pi_{n+1}(S^{n})$ is the standard generator. 
\end{proposition}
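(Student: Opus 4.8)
The plan is to construct the value $\rho(x)$ one element at a time, using the two ``perfect pairing'' facts already in hand: that $x \mapsto \lambda(x,\,\cdot\,)$ identifies $X$ with $\Hom(Y,\pi_{n}(S^{n}))$, and the preceding proposition, that $z \mapsto q(z,\,\cdot\,)|_{Y'}$ identifies $\Torsion(Y)$ with $\Hom(Y',\pi_{n+1}(S^{n}))$ for any complement $Y'$ of $\Torsion(Y)$ in $Y$. Fix $x \in X$. The first step is to check that $y \mapsto \lambda(x,y)\circ\rho_{n+1}$ is a \emph{homomorphism} $\phi_{x}\colon Y \to \pi_{n+1}(S^{n})$. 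Additivity of $\lambda(x,\,\cdot\,)$ is automatic; the point that needs an argument is that $w \mapsto w\circ\rho_{n+1}$ defines a homomorphism $\pi_{n}(S^{n}) \to \pi_{n+1}(S^{n})$, and this is exactly where one uses that $\rho_{n+1}$ is a suspension --- by construction the $(n-2)$-fold suspension of the Hopf fibration. Concretely, composing on the outside with a co-$H$ map distributes over the pinch comultiplication on $S^{n}$, so $(w_{1}+w_{2})\circ\rho_{n+1} \simeq w_{1}\circ\rho_{n+1} + w_{2}\circ\rho_{n+1}$; in particular $(k\iota_{n})\circ\rho_{n+1} = k\,\rho_{n+1}$. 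Thus $\phi_{x}$ is a homomorphism.

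For existence, fix a complement $Y'$ of $\Torsion(Y)$ and apply the preceding proposition to $\phi_{x}|_{Y'}$: there is a unique $\rho(x) \in \Torsion(Y)$ with $q(\rho(x),y') = \lambda(x,y')\circ\rho_{n+1}$ for all $y' \in Y'$. It then remains to upgrade this identity from $Y'$ to all of $Y$. Both sides are additive in $y$ (the left by bilinearity of $q$, the right since $\phi_{x}$ is a homomorphism), and $Y = Y' \oplus \Torsion(Y)$, so it suffices to check the identity for $y \in \Torsion(Y)$. For such $y$ the left-hand side $q(\rho(x),y)$ vanishes because condition ii. of Definition \ref{defn: wall pairing} forces $q$ to vanish on $\Torsion(Y)\times\Torsion(Y)$, and the right-hand side $\lambda(x,y)\circ\rho_{n+1}$ vanishes because $\lambda(x,\,\cdot\,)$ takes values in the torsion-free group $\pi_{n}(S^{n})\cong\Z$ while $y$ is torsion. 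Hence both sides are $0$ on $\Torsion(Y)$, so equation (\ref{eq: rho map}) holds for all $y \in Y$.

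For uniqueness, if $\rho$ and $\rho'$ both satisfy (\ref{eq: rho map}), then for each $x$ the element $\rho(x)-\rho'(x) \in \Torsion(Y)$ pairs to $0$ against every $y \in Y$, in particular against every $y \in Y'$; injectivity of $z \mapsto q(z,\,\cdot\,)|_{Y'}$ from the preceding proposition then gives $\rho(x) = \rho'(x)$. (The same additivity-in-$x$ argument together with this uniqueness shows that $\rho$ is automatically a homomorphism, though that is not part of the statement.)

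The only step I expect to require genuine thought is the first one --- verifying that $\phi_{x}$ is a homomorphism, i.e. that postcomposition with $\rho_{n+1}$ is additive --- since this is the place where the suspension hypothesis on $\rho_{n+1}$ (available because $n \geq 4$) is essential; everything else is a formal consequence of the two perfect-pairing statements and the vanishing of $q$ and of $\lambda$ on torsion classes.
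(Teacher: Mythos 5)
Your proof is correct, and it takes a different (and arguably cleaner) route than the paper's. The paper constructs $\rho$ by choosing a unimodular list $(y_{1},\dots,y_{g})$ with $\alpha(y_{i})=0$, invoking the \emph{unique} complementary bases $(x_{1},\dots,x_{g})$ of $X$ and $(z_{1},\dots,z_{g})$ of $\Torsion(Y)$, declaring $\rho(x_{i})=z_{i}$, and deducing uniqueness from the uniqueness of those complementary bases; the verification that this $\rho$ satisfies (\ref{eq: rho map}) is left as ``clearly.'' You instead solve for $\rho(x)$ pointwise using the duality isomorphism $\Torsion(Y)\cong\Hom(Y',\pi_{n+1}(S^{n}))$ of the preceding proposition, which gives existence by surjectivity and uniqueness by injectivity without ever choosing bases, and you then extend the identity from $Y'$ to all of $Y$ by noting that both sides vanish on $\Torsion(Y)$. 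A genuine merit of your write-up is that you isolate and justify the one nontrivial analytic input — that $w\mapsto w\circ\rho_{n+1}$ is additive because $\rho_{n+1}$ is a suspension, hence a co-$H$ map — which the paper's proof also needs (both to make $\phi_{x}$ a homomorphism and to verify (\ref{eq: rho map}) on non-basis elements) but never states; the paper only alludes to this stability fact in the remark after the proposition. Your parenthetical observation that $\rho$ is automatically a homomorphism is also correct and is implicitly used later in the paper, so making it explicit is a small bonus.
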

 \begin{proof} 
 Let $(y_{1}, \dots, y_{g})$ be a unimodular list of elements in $Y$ spanning a free direct summand of rank $g$ such that $\alpha(y_{i}) = 0$ for all $i$. Then let 
 $(x_{1}, \dots, x_{g})$ and $(z_{1}, \dots, z_{g})$
 be the unique bases of $X$ and $\text{Torsion}(Y)$ so that 
 $$(x_{1}, \dots, x_{g}) \quad \text{and} \quad (y_{1}, \dots, y_{g}, z_{1}, \dots, z_{g})$$
satisfy conditions i., ii., and iii. in Definition \ref{defn: wall pairing}. We define $\rho: X \longrightarrow Y$ to be the homomorphism determined by 
$x_{i} \mapsto z_{i}$ for $i = 1, \dots, g$. Clearly with this definition $\rho$ satisfies (\ref{eq: rho map}). Uniqueness of the map $\rho$ follows from uniqueness of the complimentary bases $(x_{1}, \dots, x_{g})$ and $(z_{1}, \dots, z_{g})$ to the unimodular sequence $(y_{1}, \dots, y_{g})$. \end{proof}
 
 For the Wall pairing $(\pi_{n}(W_{g,1}), \; \pi_{n+1}(W_{g,1}),\;  \lambda_{n, n+1},\;  \lambda_{n+1, n+1},\; \alpha_{n+1})$, the map $\rho$ is given by the formula,
 $x \mapsto x\circ\rho_{n+1}$ for $x \in \pi_{n}(W_{g,1})$. This is indeed a homomorphism since when $n \geq 4$, the homotopy group $\pi_{n+1}(W_{g,1})$ is in the stable range. 
 
 This map $\rho: X \longrightarrow \text{Torsion}(Y)$ will prove to be useful latter on when working with \text{Wall pairings}. 

\section{High Connectivity of $K_{\bullet}(W_{g,1})$} \label{High connectivity}
In this section we prove that the geometric realization $|K_{\bullet}(W_{g,1})|$ is highly connected. We do so by comparing it to a highly connected simplicial complex $K^{\pi}(W_{g,1})$ modeled on the \text{Wall pairing}  $(\pi_{n}(W_{g,1}), \; \pi_{n+1}(W_{g,1}),\;  \lambda_{n, n+1},\;  \lambda_{n+1, n+1},\; \alpha_{n+1})$ described in the previous section.

\begin{defn} \label{defn: wall complex}  Let $(X, \; Y, \; \lambda, \; q, \; \alpha)$ be a Wall pairing. 
We define $K(X, \; Y, \; \lambda, \; q, \; \alpha)$ to be the simplicial complex whose $p$-simplices are given by sets of pairs 
$$\{(x_{0}, y_{0}), \dots, (x_{p}, y_{p})\} \subset  X\times Y$$
such that for $i, j = 0, \dots, p$ the following conditions are satisfied:
\begin{enumerate}
\item[i.] $\lambda(x_{i}, y_{j}) = \delta_{i,j}\cdot \iota_{n}$,
\item[ii.] $q(y_{i}, y_{j}) = 0$, 
\item[iii.] $\alpha(y_{i}) = 0$.
\end{enumerate}
\end{defn}

By the above definition, each vertex $(x, y) \in K(X, \; Y, \; \lambda, \; q, \; \alpha)$ determines an embedding of the rank $1$ Wall pairing determined by the restrictions of 
$\lambda, q$, and $\alpha$ to the submodules
 $\langle x \rangle \subset X$ and  $\langle y, \;  \rho(x) \rangle \subset Y$.
The main Wall pairing that we are interested in is of course 
$$(\pi_{n}(W_{g,1}), \; \pi_{n+1}(W_{g,1}),\;  \lambda_{n, n+1},\;  \lambda_{n+1, n+1},\; \alpha_{n+1}).$$
In order to save space we will denote,
\begin{equation} K^{\pi}(W_{g,1}) \; := \; K(\pi_{n}(W_{g,1}), \; \pi_{n+1}(W_{g,1}),\;  \lambda_{n, n+1},\;  \lambda_{n+1, n+1},\; \alpha_{n+1}). \end{equation}
We view the complex $K^{\pi}(W_{g,1})$ as an algebraic version of the complex $K^{c}(W_{g,1})$. 

To prove our main theorem, we will need the complex $K^{\pi}(W_{g,1})$ to have a certain technical property. Recall the definition from \cite{GRW 12}:
\begin{defn} \label{defn: cohen mac} A simplicial complex $K$ is said to be \textit{weakly Cohen-Macaulay} of dimension $n$ if it is $(n-1)$-connected and the link of any $p$-simplex is $(n-p-2)$-connected. In this case, we write $\omega CM(K) \geq n$. \end{defn} 

There is a general theorem from \cite{GRW 12} regarding weakly Cohen-Macaulay complexes which we will need. We state the theorem here.

\begin{theorem} {\rm (Galatius and Randal-Williams,  \cite[Theorem 2.4]{GRW 12})} \label{thm: cohen macaulay} Let $X$ be a simplicial complex and $f: \partial I^{n} \longrightarrow |X|$ be a map which is simplicial with respect to some PL triangulation of $\partial I^{n}$. Then, if $\omega CM(X) \geq n$, the triangulation extends to a PL triangulation of $I^{n}$, and $f$ extends to a simplicial map $g: I^{n} \longrightarrow |X|$ with the property that $g(\link(v)) \subset \link(g(v))$ for each interior vertex $v \in I^{n} - \partial I^{n}$. In particular, $g$ is simplexwise injective if $f$ is. \end{theorem}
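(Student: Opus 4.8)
The plan is to prove the theorem by induction on $n$, treating the ``in particular'' clause as a formal consequence of the link condition and concentrating the real work on the construction of $g$.

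First, suppose an extension $g$ with the stated link property has been produced. If $g$ were constant on an edge $\{v,w\}$ of some simplex of the triangulation of $I^{n}$, then neither $v$ nor $w$ could be an interior vertex: for if, say, $v$ lay in $I^{n}\setminus\partial I^{n}$, then $w\in\link(v)$ would give $g(w)=g(v)\in g(\link(v))\subseteq\link(g(v))$, which is impossible since a vertex never lies in its own link. Hence $v,w\in\partial I^{n}$; arranging the triangulation of $I^{n}$ so that $\partial I^{n}$ is a full subcomplex, the edge $\{v,w\}$ is then a simplex of $\partial I^{n}$, on which $g=f$, contradicting simplexwise injectivity of $f$. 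So it suffices to build $g$.

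When $n=0$ we have $\partial I^{0}=\emptyset$, and $\omega CM(X)\geq 0$ merely says $X\neq\emptyset$, so we send the single point of $I^{0}$ to any vertex of $X$, the link condition being vacuous. For the inductive step assume the theorem in dimensions $<n$, and note first that $\omega CM(X)\geq n$ forces $\omega CM(\link_{X}(w))\geq n-1$ for every vertex $w$ of $X$: the link in $\link_{X}(w)$ of a $p$-simplex $\tau$ is the link in $X$ of the $(p+1)$-simplex $w\ast\tau$, which is $(n-(p+1)-2)$-connected, while $\link_{X}(w)$ itself is $(n-2)$-connected. Now, given the simplicial map $f\colon\partial I^{n}\to|X|$, use the $(n-1)$-connectivity of $|X|$ to extend $f$ to a continuous map $I^{n}\to|X|$, and then apply relative simplicial approximation, subdividing only the interior of $I^{n}$, to get a simplicial map $g_{1}\colon I^{n}\to|X|$ with $g_{1}|_{\partial I^{n}}=f$ for some PL triangulation of $I^{n}$ restricting to the given one on $\partial I^{n}$. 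The map $g_{1}$ need not satisfy the link condition, and the remaining task is to repair it.

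This repair is the heart of the matter and the step I expect to be the main obstacle. One processes the interior vertices along a filtration of $I^{n}$ relative to $\partial I^{n}$ (say by the dimension of the simplices of the triangulation built so far), keeping the map fixed on the portion already treated. At an interior vertex $v$ the link condition amounts to asking that no neighbour of $v$ be sent to $g(v)$ --- equivalently that $g$ not ``fold'' at $v$ --- and the failure is measured by a finite quantity. A fold at $v$ is removed by subdividing the open star $v\ast\link(v)$, a PL $n$-disc, and choosing a new simplicial map on it agreeing with $g$ on the boundary sphere $\link(v)$: the $(n-2)$-connectivity of $\link_{X}(g(v))$ null-homotopes the relevant $S^{n-1}$-worth of data into that link, and $\omega CM(\link_{X}(g(v)))\geq n-1$ together with the case $n-1$ of the theorem makes this replacement simplicial and fold-free at the newly created interior vertices, all of $I^{n}$ outside the star being left untouched. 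What must then be checked --- and this is the genuine technical content, carried out in the proof of \cite[Theorem 2.4]{GRW 12} --- is that these local moves can be ordered and bounded so that after finitely many of them one arrives at a $g$ satisfying the link condition everywhere.
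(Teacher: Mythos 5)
First, a point of comparison: this paper does not prove the statement at all --- it is quoted verbatim from \cite[Theorem 2.4]{GRW 12} --- so your proposal has to be measured against the argument in that reference. The peripheral parts of your sketch are correct and match it: the deduction of simplexwise injectivity from the link condition (after arranging that $\partial I^{n}$ is a full subcomplex), the observation that $\omega CM(X)\geq n$ forces $\omega CM(\link_{X}(w))\geq n-1$, and the initial step of extending $f$ continuously by $(n-1)$-connectedness and then applying relative simplicial approximation.

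The gap is in the repair step, and it is not just deferred bookkeeping: the local move you describe would fail as stated. For an interior vertex $v$, the sphere bounding its star is $\link_{I^{n}}(v)\cong S^{n-1}$, whereas $\link_{X}(g(v))$ is only guaranteed to be $(n-2)$-connected; so ``null-homotoping the $S^{n-1}$-worth of data into that link'' asks for exactly one more dimension of connectivity than the hypothesis supplies (and, worse, $g(\link(v))$ need not lie in $\link_{X}(g(v))$ to begin with --- that containment is precisely what you are trying to arrange). The idea that makes the count close in \cite{GRW 12} is to induct not over offending vertices but over \emph{bad simplices} of maximal dimension $p$, where a simplex is bad when each of its vertices shares its image with another of its vertices. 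For such a maximal $\sigma$, maximality guarantees that $g$ carries $\link_{I^{n}}(\sigma)\cong S^{n-p-1}$ into the link in $X$ of the simplex spanned by $g(\sigma)$, and badness forces that simplex to have dimension at most $p-1$, so its link is $\bigl(n-(p-1)-2\bigr)=(n-p-1)$-connected --- exactly enough to fill the $(n-p-1)$-sphere. One then retriangulates $\mathrm{star}(\sigma)=\sigma *\link(\sigma)$ as $\partial\sigma$ joined with a cone on $\link(\sigma)$, maps the cone by the filling (made simplicial and non-degenerate using the inductive hypothesis for the link complex, which is weakly Cohen--Macaulay of the appropriate dimension), and checks that the count of maximal-dimensional bad simplices strictly drops. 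Without this shift from vertex stars to maximal bad simplices, the dimension arithmetic in your scheme cannot be made to work, and the termination argument you defer to the reference is built entirely on it.
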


We now state a theorem whose proof we put off until Section \ref{section: The Intersection Complex}. 
\begin{theorem} \label{thm: pi cohen mac} For $g > 3$, $\omega CM(K^{\pi}(W_{g,1})) \geq \frac{1}{2}(g - 1)$. \end{theorem}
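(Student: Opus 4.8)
The plan is to follow the strategy of Charney's proof of Theorem 3.2 in \cite{Ch 87}, adapted to the Wall-pairing setting, and to feed in Van der Kallen's connectivity theorem for the poset of unimodular sequences (Theorem \ref{theorem: unimodular connectivity}). The overall architecture has three layers. First, reduce the statement about $K^{\pi}(W_{g,1})$ to a statement about an auxiliary complex built only from the $\pi_{n}$-side, namely the complex $\mathcal{U}$ of unimodular sequences $(y_{1},\dots,y_{p+1})$ in $Y=\pi_{n+1}(W_{g,1})$ spanning a free direct summand with $q(y_i,y_j)=0$ and $\alpha_{n+1}(y_i)=0$ for all $i,j$. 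The point of Proposition \ref{prop: uniqueness of rho} and the perfect-pairing statements preceding it is precisely that a vertex $(x,y)$ of $K^{\pi}(W_{g,1})$ is \emph{determined} by its $y$-coordinate: given a unimodular $y$ with $\alpha_{n+1}(y)=0$, there is a unique $x\in X$ with $\lambda(x,y)=\iota_n$ and $\lambda(x,y')=0$ for the complementary basis vectors, and moreover $q$-orthogonality of the $y_i$ is exactly the condition ii. in Definition \ref{defn: wall complex}. So $K^{\pi}(W_{g,1})\cong\mathcal{U}$ as simplicial complexes, and the link of a $p$-simplex in $K^{\pi}(W_{g,1})$ corresponds, after splitting off the sub-Wall-pairing it spans, to the analogous $\mathcal{U}$-complex for a Wall pairing of rank $g-p-1$ (here one uses Proposition \ref{prop: rank 1 wall} to see that the orthogonal complement of a rank-$(p+1)$ sub-pairing is a rank-$(g-p-1)$ Wall pairing). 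Thus it suffices to prove $\mathcal{U}$ for the standard rank-$g$ Wall pairing is $\tfrac{1}{2}(g-3)$-connected, since the link condition in $\omega CM$ then follows by induction on rank.

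Second, analyze $\mathcal{U}$ by comparison with the full poset of unimodular sequences. Let $\mathcal{IU}(Y)$ denote Van der Kallen's complex of unimodular sequences in $Y$ (or rather in the free summand $\Z^{\oplus g}\subset Y$); by Theorem \ref{theorem: unimodular connectivity} its realization is $\tfrac{1}{2}(g-3)$-connected. I want to realize $\mathcal{U}$ as obtained from $\mathcal{IU}$ by imposing the two extra conditions $q(y_i,y_j)=0$ and $\alpha_{n+1}(y_i)=0$. The cleanest route, mirroring Charney, is a "coloring" or surgery argument: given a map $f\colon S^{k}\to|\mathcal{U}|$ with $k\le\tfrac{1}{2}(g-3)$, make it simplicial against a triangulation of $D^{k+1}$ using the connectivity of $\mathcal{IU}$ to first fill it in $\mathcal{IU}$, then push the filling into $\mathcal{U}$ by modifying the vertices of the filling one at a time. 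For a vertex $y$ of the filling violating $\alpha_{n+1}(y)=0$, replace it by $y-\rho(x_y)$ or more generally by $y$ plus a torsion element chosen so that $\alpha_{n+1}$ vanishes — here one uses the summation formula (\ref{eq: alpha summation formula}) together with $\partial\colon\pi_{n+1}(S^n)\to\pi_n(SO_n)$ and the structure of the torsion submodule. For a pair violating $q(y_i,y_j)=0$, adjust one of them by a suitable multiple of a torsion class orthogonal to the rest of the current simplex; the bilinearity of $q$ and the perfect-pairing property guarantee such an adjustment exists while staying unimodular, provided enough "room" is available, i.e. provided the star of the simplex being fixed has rank bounded below — which is where the slack $g-3$ versus $g-1$ in the connectivity range gets consumed.

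Third, assemble the induction. Once $|\mathcal{U}|$ is shown $\tfrac{1}{2}(g-3)$-connected for all $g$, the link of a $p$-simplex $\sigma$ in $K^{\pi}(W_{g,1})$ is identified with the $\mathcal{U}$-complex for a Wall pairing of rank $g-p-1$, hence is $\tfrac{1}{2}((g-p-1)-3)=\tfrac{1}{2}(g-p-4)$-connected; we need it to be $(\tfrac{1}{2}(g-1)-p-2)=\tfrac{1}{2}(g-p-5)$-connected, and $\tfrac{1}{2}(g-p-4)\ge\tfrac{1}{2}(g-p-5)$, so the link condition holds, and the global connectivity $\tfrac{1}{2}(g-3)\ge\tfrac{1}{2}(g-1)-1$ gives the $(n-1)$-connectivity part of $\omega CM\ge\tfrac{1}{2}(g-1)$.

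The main obstacle is the surgery step in the second layer: showing that a vertex or a $q$-violating pair in the filling disk can be corrected \emph{without destroying unimodularity and without recreating violations at simplices already fixed}. This is exactly the delicate bookkeeping that makes Charney's argument long, and the analogue here is complicated by the presence of the torsion summand $(\Z/2)^{\oplus g}$ and by the fact that $\alpha_{n+1}$ is only a quadratic (not linear) refinement of $q$ via (\ref{eq: alpha summation formula}); one must be careful that the correction terms, which live in $\Torsion(Y)$, interact correctly with both $q$ and $\alpha_{n+1}$ simultaneously. I expect that, as in \cite{Ch 87}, one organizes the corrections by first achieving $\alpha_{n+1}=0$ on all vertices (a "linear" move, easy because torsion corrections do not change the image in the free summand and one uses the homomorphism property of $\rho$), and only then fixes the $q$-relations, each fix being a transvection-type move supported on a link of large enough rank; the ordering ensures the two families of conditions do not fight each other. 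The detailed execution of this is deferred to Section \ref{section: The Intersection Complex}.
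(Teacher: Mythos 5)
Your first reduction is already incorrect: a vertex $(x,y)$ of $K^{\pi}(W_{g,1})$ is \emph{not} determined by its $y$-coordinate. The uniqueness statement preceding Proposition \ref{prop: uniqueness of rho} produces a unique dual basis only once a \emph{full rank-$g$} unimodular list $(y_{1},\dots,y_{g})$ has been fixed; for a single vertex the only constraint on $x$ is $\lambda(x,y)=\iota_{n}$, so $x$ may be altered by any element of the rank-$(g-1)$ submodule $\{x'\mid\lambda(x',y)=0\}$. Hence $K^{\pi}(W_{g,1})$ is not isomorphic to your complex $\mathcal{U}$, and comparing the two is itself a nontrivial connectivity problem your argument does not address. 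Relatedly, you propose to apply Van der Kallen's theorem on the $Y=\pi_{n+1}$ side, but $Y$ has a $(\Z/2)^{\oplus g}$ torsion summand, Theorem \ref{theorem: unimodular connectivity} as quoted applies to free $\Z$-modules, and simplices of $\mathcal{U}$ need not lie in any fixed free summand of $Y$. (There is also a small arithmetic slip: the link of a $p$-simplex must be $\tfrac{1}{2}(g-2p-5)$-connected, not $\tfrac{1}{2}(g-p-5)$-connected, though the inequality still holds.)

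The more serious gap is that the entire content of the theorem is deferred to your ``surgery'' step, and the repair moves you describe fail. Adding a torsion element $z$ to $y$ changes $\alpha_{n+1}$ by $\alpha(z)+\partial(q(y,z))$; since $\alpha$ vanishes on $\Torsion(Y)$ and $q(y,z)\in\pi_{n+1}(S^{n})\cong\Z/2$, the only achievable corrections are $0$ and $\partial(\rho_{n+1})$, whereas $\alpha_{n+1}(y)$ takes values in $\pi_{n}(SO_{n})$ --- so a vertex of the filling with $\alpha_{n+1}(y)\neq 0$ cannot in general be repaired in place, and repairing it with free-part corrections destroys exactly the adjacency and unimodularity data you need to preserve. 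The paper avoids repairing $y$'s altogether: it introduces the poset $M(X,Y)$ whose elements allow $y_{i}=0$, so that $M(X,Y)\cap\mathcal{O}(X\times\{0\})=U(X)$ with $X$ \emph{free}, where Van der Kallen applies directly, and Lemma \ref{lemma: intersection connectivity} then gives $(g-3)$-connectivity of $|M(X,Y)|$. The passage to $L(X,Y)$ is made by filtering $M(X,Y)$ by the number of entries with $y=0$, deformation-retracting $F_{0}$ onto $L(X,Y)$, analyzing the links coned off at each stage via Lemma \ref{lemma: compliment module} and the suspension homeomorphism (\ref{eq: link homeomorphism}), and --- the step for which your sketch has no substitute --- showing that the resulting surjection $\bigoplus_{\hat v}\pi_{d}(\link_{F_{1}}(\hat v))\to\pi_{d}(F_{0})$ factors through the null-homotopic inclusions $L(X,Y)_{(v,w)}\hookrightarrow L(X,Y)$ of Lemma \ref{lemma: null homotopy} and is therefore zero, forcing $\pi_{d}(F_{0})=0$. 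Without an analogue of that final argument the connectivity is not established.
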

It follows from this theorem that the geometric realization $|K^{\pi}(W_{g,1})|$ is $\frac{1}{2}(g - 3)$-connected. 

Recall from the previous section the elements $e \in \pi_{n}(H)$ and $\eta \in \pi_{n+1}(H)$. There is a simplicial map 
\begin{equation} \label{eq: simplicial map} K^{c}(W_{g,1}) \longrightarrow K^{\pi}(W_{g,1}), \quad (\phi, t) \; \mapsto \; (\phi_{*}(e), \; \phi_{*}(\eta)).\end{equation} 
Recall that for any simplex 
 $((\phi_{0}, t_{0}), \dots, (\phi_{p}, t_{p}))$ the cores 
$\phi_{0}(C), \dots, \phi_{p}(C)$ are  disjoint. Thus 
the above map respects adjacencies and is actually a simplicial map.
We will prove that this map is highly connected.
\begin{theorem} \label{lemma: highly connected map} Let $n := \frac{1}{2}[\dim(W_{g,1})-1]$ be greater than or equal to $4$. Then the map $K^{c}(W_{g,1}) \longrightarrow K^{\pi}(W_{g,1})$ from (\ref{eq: simplicial map}) is $\frac{1}{2}(g-3)$-connected. \end{theorem}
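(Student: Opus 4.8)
The plan is to prove connectivity of the map $K^{c}(W_{g,1}) \to K^{\pi}(W_{g,1})$ by verifying the hypotheses of a standard "surjectivity up to a range plus lifting of relative cycles" criterion, exactly in the spirit of the corresponding step in \cite{GRW 12}. Concretely, to show the map is $\tfrac{1}{2}(g-3)$-connected it suffices to show: (1) for $p \le \tfrac{1}{2}(g-3)$, every map $(D^{p}, S^{p-1}) \to (|K^{\pi}(W_{g,1})|, \text{image})$ can, after a simplicial approximation, be lifted simplex-by-simplex to $|K^{c}(W_{g,1})|$; and since the map is a simplicial map which is a bijection on objects of the form "realize a chosen $H$-embedding," the crux is the relative lifting of individual simplices. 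First I would reduce, using Theorem \ref{thm: cohen macaulay} together with Theorem \ref{thm: pi cohen mac} (so $\omega CM(K^{\pi}(W_{g,1})) \ge \tfrac{1}{2}(g-1)$), to the situation of a simplexwise injective simplicial map $g \colon I^{m} \to |K^{\pi}(W_{g,1})|$ with $m \le \tfrac12(g-3)+1$, agreeing with a given lift on $\partial I^{m}$, and then lift it one interior vertex at a time.

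The heart of the argument is therefore the following local statement: given a vertex $(x, y) \in K^{\pi}(W_{g,1})$ and a lift over its link — i.e. a compatible family of embeddings $\phi_{i} \colon H \hookrightarrow W_{g,1}$ whose images of the core $C$ are pairwise disjoint and avoid a prescribed region, and whose induced Wall-pairing data are orthogonal to $(x,y)$ — one can find an embedding $\phi \colon H \hookrightarrow W_{g,1}$ with $\phi_{*}(e) = x$, $\phi_{*}(\eta) = y$, with $\phi(C)$ disjoint from all the $\phi_{i}(C)$, and with the correct behaviour on the base coordinate patch $a$. To produce $\phi$: first use Haefliger's Theorem \ref{thm: Haefliger embedding} (valid since $n \ge 4$) to realize $x$ and $y$ by embeddings $S^{n} \hookrightarrow W_{g,1}$ and $S^{n+1} \hookrightarrow W_{g,1}$; the condition $\alpha_{n+1}(y)=0$ forces the normal bundle of the $S^{n+1}$ to be trivial, and since $\lambda_{n,n+1}(x,y) = \iota_{n}$ the two spheres can be arranged (Whitney trick, \cite[Theorem 6.6]{Mil 65}) to meet transversally in a single point, so that a neighbourhood of their union, together with a handle, models a copy of $H$ with the core $C$ sitting correctly. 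Then I would use the disjointness and orthogonality hypotheses to push $\phi(C)$ off each $\phi_{i}(C)$: the relevant intersections are self-intersections and mutual intersections of $(n+1)$-spheres in a $(2n+1)$-manifold, and the generalized Whitney trick of Wells, Proposition \ref{prop: Generalized Whitney Trick}, applies precisely because the obstruction is $\lambda_{n+1,n+1}$ of the two classes, which vanishes by the orthogonality encoded in the link of $(x,y)$ in $K^{\pi}(W_{g,1})$; the "furthermore" clause of that proposition lets the isotopy be supported away from the base patch and away from the other cores, so compatibility with the already-chosen lift over $\partial I^{m}$ is preserved.

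I expect the main obstacle to be bookkeeping the compatibility and support conditions while iterating the generalized Whitney trick: each application of Proposition \ref{prop: Generalized Whitney Trick} must be performed in the complement of a tubular neighbourhood of the previously-fixed cores and of the base patch, and one must check that these complements remain $(n-1)$-connected $(2n+1)$-manifolds so that Wells' theorem still applies and the invariant $\alpha(A,B;X)$ still equals the relevant $\lambda_{n+1,n+1}$; this is the analogue of the "not all classes are represented by embeddings" difficulty flagged in the introduction for $n < 4$, and it is exactly where the hypothesis $n \ge 4$ is used essentially. A secondary technical point is to confirm that a regular neighbourhood of (core $\cup$ handle) constructed from the two transverse spheres is genuinely diffeomorphic rel boundary to $H$ with $C$ as its core; this follows from the uniqueness part of Proposition \ref{prop: rank 1 wall} (any rank-one Wall pairing is the standard one) combined with Wall's handle-theoretic classification of $(n-1)$-connected $(2n+1)$-manifolds, but it should be stated carefully. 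Modulo these points, the lifting goes through verbatim as in \cite{GRW 12}, and assembling the local lifts via Theorem \ref{thm: cohen macaulay} yields the claimed $\tfrac{1}{2}(g-3)$-connectivity.
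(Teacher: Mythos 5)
Your proposal follows essentially the same route as the paper: reduce via Theorem \ref{thm: pi cohen mac} and Theorem \ref{thm: cohen macaulay} to a simplexwise injective filling of the disc, then lift interior vertices one at a time by realizing each pair $(x,y)$ as transversally intersecting spheres (Haefliger plus the Whitney trick), plumbing a neighbourhood of their union into an embedding of $H$, and using Wells' generalized Whitney trick (Proposition \ref{prop: Generalized Whitney Trick}) to push the new core off the previously chosen ones. The technical points you flag (support of the isotopies away from the fixed cores, and identifying the plumbed neighbourhood with $H$) are exactly the ones the paper handles, so the argument is correct as proposed.
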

\begin{proof} Let $k \leq \frac{1}{2}(g-3)$ and consider a map $f: S^{k} \longrightarrow |K^{c}(W_{g,1})|$, which we may assume is simplicial with respect to some PL triangulation of $S^{k} = \partial I^{k+1}$. By Theorem \ref{thm: pi cohen mac} the composition $\partial I^{k+1} \longrightarrow |K^{c}(W_{g,1})| \longrightarrow |K^{\pi}(W_{g,1})|$ is null homotopic and so extends to a map $g: I^{k+1} \longrightarrow |K^{\pi}(W_{g,1})|$, which we may suppose is simplicial with respect to a PL triangulation of $I^{k+1}$ extending the triangulation of its boundary. 

By Lemma \ref{thm: pi cohen mac}, $\omega CM(K^{\pi})) \geq
\frac{1}{2}(g-1)$ and so by Theorem \ref{thm: cohen macaulay}, as $k
+1 \leq \frac{1}{2}(g-1)$, we can arrange that $g$ is simplexwise
injective on the interior of $I^{k+1}$. We choose a total order on the
interior vertices of $I^{k+1}$ and inductively choose lifts of each
vertex to $K^{c}(W_{g,1})$ which respect adjacencies.

Let $(x, y)$ be a vertex in the image of the interior of $I^{k+1}$ under the map $g$. We have $x \in \pi_{n}(W_{g,1})$ and $y \in \pi_{n+1}(W_{g,1})$ such that 
$$\alpha(x) = 0, \quad \alpha(y) = 0, \quad \text{and} \quad \lambda_{n, n+1}(x, y) = \iota_{n} \in \pi_{n}(S^{n}).$$
Furthermore, the elements $x$ and $y$ can be represented by embeddings of spheres, $S^{n+1}$ and $S^{n}$, each with trivialized normal bundle. Since $\lambda_{n, n+1}(x, y) = \iota_{n}$, application of the Whitney Trick implies that we may assume that these embedded spheres representing $x$ and $y$ intersect each other transversally at exactly one point. Using a general position argument, we may then assume that $x$ and $y$ are represented by embeddings,
$$j_{x}: S^{n+1}\times D^{n} \longrightarrow W_{g,1} \quad \text{and} \quad j_{y}: D^{n+1}\times S^{n} \longrightarrow W_{g,1}$$
such that 
$$j_{x}(S^{n+1}\times D^{n}) \cap j_{y}(D^{n+1}\times S^{n}) \; = \; U\times V$$
where $U \subset j_{x}(S^{n+1}\times\{0\})$ and $V \subset j_{y}(\{0\}\times S^{n})$ are submanifolds (embedded as closed subsets), diffeomorphic to $D^{n+1}$ and $D^{n}$ respectively. It can be easily checked that the push-out 
\begin{equation} \label{eq: pushout} M_{x,y} :=  j_{x}(S^{n+1}\times D^{n}) \bigcup_{U\times V} j_{y}(D^{n+1}\times S^{n}) \end{equation}
is diffeomorphic to $W_{1, 1} = S^{n+1}\times S^{n} - \Int(D^{2n+1})$, after smoothing the corners. The inclusion of $M_{x,y}$ from (\ref{eq: pushout}) determines an embedding $W_{1,1} \hookrightarrow W_{g,1}$. One then chooses an embedded arc connecting 
$M_{x,y}$  
to $\partial W_{g,1}$ whose interior has empty intersection with $M_{x,y}$ and $\partial W_{g,1}$ (this is possible by a general position argument). A thickening of this ark together with the embedding $W_{1,1} \hookrightarrow W_{g,1}$ then determines an embedding $\phi: H \longrightarrow W_{g,1}$ and thus a lift of the vertex $(x, y)$ to a vertex in $K^{c}(W_{g,1})$. This takes care of the zeroth step of the induction. 

Now suppose that $(x, y)$ is an interior vertex and let $(x_{1}, y_{1}), \dots, (x_{k}, y_{k})$ be the vertices adjacent to $(x, y)$ that have already been lifted and denote by $(\phi_{1}, t_{1}), \dots, (\phi_{k}, t_{k})$ their lifts. Note that the set $\{(x_{1}, y_{1}), \dots, (x_{k}, y_{k})\}$ is not necessarily a simplex in $K^{\pi}(W_{g,1})$. However, $\{(x, y), (x_{j}, y_{j})\}$ is a $1$-simplex in $K^{\pi}(W_{g, 1})$ for $j = 1, \dots, k$. Now, 
$$\lambda_{n, n+1}(x, y_{j}) = 0, \quad \lambda_{n, n+1}(x_{j}, y) = 0, \quad \text{and} \quad \lambda_{n+1, n+1}(y, y_{j}) = 0 \quad \text{for $j = 1, \dots, k$}$$
and so we may apply the Whitney Trick and its generalization Proposition \ref{prop: Generalized Whitney Trick} inductively, to choose embeddings 
$$j_{x}: S^{n} \longrightarrow W_{g,1} \quad \text{and} \quad j_{y}: S^{n+1} \longrightarrow W_{g, 1}$$
representing the classes $x$ and $y$ such that $j_{x}(S^{n})$ and $j_{y}(S^{n+1})$ are disjoint form the \textit{cores} $\phi_{j}(C)$ (recall Section \ref{The Complex of Embedded Handles}),  for all $j \in \{1, \dots, k\}$. Furthermore, since $\lambda_{n, n+1}(x, y) = \iota_{n}$, we may by application of the Whitney Trick assume further that $j_{x}$ and $j_{y}$ are such that $j_{x}(S^{n})$ and $j_{y}(S^{n+1})$ intersect transversally at just one point. We then carry out the plumbing construction employed in the zeroth step of the induction on the embeddings $j_{x}$ and $j_{y}$ to obtain an embedding 
$\phi: H \longrightarrow W_{g,1}$ such that $\phi(C) \cap \phi_{j}(C) = \emptyset$ for all $j \in \{1, \dots, k\}$. This completes the induction and proves the theorem. 
\end{proof}

It follows from the above theorem that $|K^{c}(W_{g,1})|$ is $\frac{1}{2}(g-3)$-connected. Now, since every simplex of the semi-simplicial set $K^{\delta}_{\bullet}(W_{g,1})$ is determined entirely by its vertices, there is a homeomorphism $|K^{c}(W_{g,1})| \cong |K^{\delta}_{\bullet}(W_{g,1})|$, and thus $|K^{\delta}_{\bullet}(W_{g,1})|$ is $\frac{1}{2}(g-3)$-connected as well.

We now consider the natural map $K^{\delta}_{\bullet}(W_{g,1}) \longrightarrow K_{\bullet}(W_{g,1})$. 

\begin{lemma} The semi-simplical space $K_{\bullet}(W_{g,1})$ is $\frac{1}{2}(g-3)$-connected. \end{lemma}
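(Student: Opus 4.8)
The plan is to deduce the connectivity of the (topologized) semi-simplicial space $K_{\bullet}(W_{g,1})$ from the already-established $\tfrac{1}{2}(g-3)$-connectivity of its discretized version $|K_{\bullet}^{\delta}(W_{g,1})| \cong |K^{c}(W_{g,1})|$, exactly as in \cite{GRW 12}. The essential point is that the natural map of semi-simplicial spaces $K_{\bullet}^{\delta}(W_{g,1}) \longrightarrow K_{\bullet}(W_{g,1})$, induced by the identity on underlying sets, becomes $\infty$-connected (or at least $\tfrac12(g-3)$-connected) on geometric realizations. So the first step is to recall the relevant comparison result: a map of semi-simplicial spaces that is a weak equivalence (or highly connected) level-wise induces a highly connected map on realizations, and more usefully, the variant from \cite{GRW 12} which handles the discrete-versus-topologized comparison via a ``microfibration'' / parametrized lifting argument.

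The key technical input I would invoke is that the forgetful map $K_p(W_{g,1}) \longrightarrow K_p^{\delta}(W_{g,1})$ sits in a situation where $K_p(W_{g,1})$, as a subspace of $(\R \times \Emb(H,W_{g,1}))^{p+1}$, has the property that each of its path components is weakly contractible — indeed, each space of embeddings $\Emb(H, W_{g,1})$ with the prescribed boundary condition is, componentwise, a union of contractible pieces up to the relevant range, and the disjointness-of-cores conditions are open conditions that do not change the weak homotopy type of the relevant components. More precisely, following \cite[Section 5 or 6]{GRW 12}, one shows that the map $|K_{\bullet}(W_{g,1})| \longrightarrow |K_{\bullet}^{\delta}(W_{g,1})|$ — wait, the map actually goes the other way; the correct statement is that the canonical continuous bijection does not hold, so instead one constructs a zig-zag or directly applies the lemma that says: if a semi-simplicial space $Z_{\bullet}$ has the property that each $Z_p$ is locally contractible in the appropriate sense and the map to its discretization is a ``levelwise equivalence up to the right range,'' then $\|Z_{\bullet}\| \to \|Z_{\bullet}^{\delta}\|$ is highly connected. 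I would cite the precise analogue of \cite[Corollary 6.9]{GRW 12} (or whatever the corresponding numbered result is) which does exactly this comparison for the manifold-embedding semi-simplicial spaces, and note that the argument there applies verbatim here because $K_{\bullet}(W_{g,1})$ is built in the same way from spaces of embeddings of a single handle $H$.

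Granting that, the proof is short: since $|K^{\delta}_{\bullet}(W_{g,1})| \cong |K^c(W_{g,1})|$ is $\tfrac12(g-3)$-connected by Theorem~\ref{lemma: highly connected map} and the remarks following it, and since the comparison map $|K^{\delta}_{\bullet}(W_{g,1})| \longrightarrow |K_{\bullet}(W_{g,1})|$ is $\infty$-connected (being an iso on $\pi_k$ for all $k$, by the componentwise-contractibility argument applied degreewise together with a spectral-sequence or skeletal-induction comparison), it follows that $|K_{\bullet}(W_{g,1})|$ is $\tfrac12(g-3)$-connected. To make the comparison map precise one uses that in each simplicial degree $p$ the continuous map $K_p(W_{g,1}) \to K_p^{\delta}(W_{g,1})$ identifies $K_p^{\delta}$ with $\pi_0 K_p$ and that every component of $K_p(W_{g,1})$ is weakly contractible; then the realization of a semi-simplicial space with weakly contractible spaces of simplices in each degree is weakly equivalent to the realization of its $\pi_0$ (this is the standard ``realization of an augmented or levelwise-contractible object'' fact, e.g. via the bicomplex/skeletal filtration).

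The main obstacle is establishing that every path component of $K_p(W_{g,1})$ is weakly contractible — equivalently, that the relevant components of $\Emb(H, W_{g,1})^{\partial}$, cut out by the open disjointness conditions on the cores $\phi_i(C)$, are weakly contractible in the range $n \geq 4$. This is precisely where one must quote Haefliger's embedding and isotopy theorems (Theorem~\ref{thm: Haefliger embedding}) together with the fact that $H$ deformation retracts onto the core $C \simeq S^n \vee S^{n+1}$, so that the space of embeddings of $H$ is controlled, up to the needed connectivity, by the space of embeddings of $C$, and a parametrized isotopy-extension/general-position argument shows the disjointness conditions are generic and contractible-choice. In practice, though, I would not re-prove this: I would cite the corresponding statement from \cite{GRW 12} (the analogue of their Lemma on $|K_{\bullet}^{\delta}| \to |K_{\bullet}|$ being a weak equivalence), observing that their proof uses only the formal structure of $K_{\bullet}$ and the existence of the isotopy $\rho_t$ pushing $H$ into any neighborhood of $C$ — both of which hold identically in our setting.
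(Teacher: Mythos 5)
Your top-level strategy coincides with the paper's: the paper's entire proof of this lemma is the observation that it is proven ``in exactly the same way as \cite[Theorem 4.6]{GRW 12},'' i.e.\ one transports the $\tfrac12(g-3)$-connectivity of $|K^{\delta}_{\bullet}(W_{g,1})|\cong|K^{c}(W_{g,1})|$ across the comparison map $|K^{\delta}_{\bullet}(W_{g,1})|\to|K_{\bullet}(W_{g,1})|$ by an argument that is purely simplicial and independent of the particular manifolds involved. Your final fallback --- cite that comparison result from \cite{GRW 12} verbatim, noting that it uses only the formal structure of $K_{\bullet}$ and the isotopy $\rho_{t}$ --- is therefore exactly what the paper does, and as a citation-based proof it is acceptable.

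However, the mechanism you propose for \emph{why} the comparison holds is wrong, and you yourself identify it as ``the main obstacle,'' so it deserves to be flagged. You claim that each path component of $K_{p}(W_{g,1})$ is weakly contractible, so that $|K_{\bullet}(W_{g,1})|$ is weakly equivalent to the realization of its levelwise $\pi_{0}$. This is false: $K_{p}(W_{g,1})$ is (an open subspace of) a product of copies of $\R\times\Emb(H,W_{g,1})^{\partial}$, and the components of such embedding spaces are not weakly contractible --- restricting to a tubular neighborhood of the core already produces nontrivial homotopy in each component coming from spaces of normal framings and bundle automorphisms (maps $S^{n}\to SO(n+1)$ and the like), and Haefliger's theorems control only $\pi_{0}$ of these embedding spaces, not their higher homotopy. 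The argument of \cite[Theorem 4.6]{GRW 12} does not proceed by levelwise contractibility; it is a deformation/simplicial-approximation argument showing directly that maps of spheres into $|K_{\bullet}(W)|$ can be compressed into the image of $|K^{\delta}_{\bullet}(W)|$ and capped off there, without ever asserting that the comparison map is a weak equivalence of realizations. So if you were to write out the proof along the lines you describe rather than citing \cite{GRW 12}, the key step would fail; the fix is simply to drop the contractibility claim and invoke (or reproduce) the actual argument of \cite[Theorem 4.6]{GRW 12}.
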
 
\begin{proof} This is proven in exactly the same way as \cite[Theorem 4.6]{GRW 12}.  The theorem is based entirely on simplicial techniques and has no dependence on the structure of the manifolds present. \end{proof}

We now derive two important corollaries of the previous lemma. These are both proven in the same way as \cite[Corollary 4.4]{GRW 12} and \cite[Corollary 4.5]{GRW 12}. 

\begin{corollary} \label{cor: transitivity} (Transitivity). Suppose that $g \geq 3$ and let $\phi_{0}, \phi_{1}: H \hookrightarrow W_{g,1}$ be embeddings.  Then there is a diffeomorphism $f$ of $W_{g,1}$ which is isotopic to the identity on the boundary and such that $\phi_{1} = f \circ \phi_{0}$. \end{corollary}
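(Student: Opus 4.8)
The plan is to exploit the action of $G := \Diff(W_{g,1})^{\partial}$ on the semi-simplicial space $K_{\bullet}(W_{g,1})$. Post-composing an embedding by an element of $G$ preserves the boundary condition of Definition \ref{defn: the embedding complex} and the disjointness of cores, so $G$ acts simplicially on $K_{\bullet}(W_{g,1})$, hence on the set $\pi_{0}K_{0}(W_{g,1})$ of isotopy classes of embedded handles. Regard $\phi_{0},\phi_{1}$ as $0$-simplices of $K_{\bullet}(W_{g,1})$. For $g \geq 3$ we have $\tfrac{1}{2}(g-3) \geq 0$, so by the preceding lemma $|K_{\bullet}(W_{g,1})|$ is path-connected; equivalently, $\phi_{0}$ and $\phi_{1}$ are joined by a finite zig-zag $\phi_{0} = \psi^{(0)}, \psi^{(1)}, \dots, \psi^{(m)} = \phi_{1}$ of $0$-simplices in which consecutive terms either lie in the same path-component of $K_{0}(W_{g,1})$ or are the two vertices of a common $1$-simplex of $K_{\bullet}(W_{g,1})$. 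It therefore suffices to realize each elementary step of the zig-zag by a diffeomorphism of $W_{g,1}$ that is isotopic to the identity on the boundary; the composite of these is the desired $f$.

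A step of the first kind is a path in $K_{0}(W_{g,1})$, i.e.\ an isotopy of embeddings $H \hookrightarrow W_{g,1}$ satisfying a fixed boundary condition (after a preliminary slide of the \R-coordinate we may take the embeddings to agree on the boundary collar throughout). By the isotopy extension theorem there is an ambient isotopy $(F_{t})_{t \in [0,1]}$ of $W_{g,1}$ with $F_{0} = \mathrm{id}$, supported away from $\partial W_{g,1}$, and $F_{1}\circ\psi^{(i)} = \psi^{(i+1)}$; then $F_{1}$ is isotopic to the identity, in particular on the boundary.

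A step of the second kind is a pair $\psi^{(i)},\psi^{(i+1)}$ with $\psi^{(i)}(C)\cap\psi^{(i+1)}(C) = \emptyset$. Precomposing each with the contracting isotopy $\rho_{t}\colon H \to H$ of Section \ref{The Complex of Embedded Handles}---which fixes a neighborhood of $C$, hence of the boundary collar of $H$---gives a path in $K_{0}(W_{g,1})$, a step of the first kind; and for $t$ large $\psi^{(i)}(\rho_{t}(H))$ and $\psi^{(i+1)}(\rho_{t}(H))$ lie in arbitrarily small neighborhoods of the disjoint cores $\psi^{(i)}(C)$ and $\psi^{(i+1)}(C)$, hence may be taken disjoint. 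We are reduced to two embeddings $\psi,\psi'\colon H \hookrightarrow W_{g,1}$ with $\psi(H)\cap\psi'(H)=\emptyset$, each meeting $\partial W_{g,1}$ in a disk. Cutting $W_{g,1}$ along $\psi(H)$ realizes a boundary-connected-sum splitting $W_{g,1} \cong \psi(H)\,\natural\,N_{\psi}$ with $N_{\psi} := W_{g,1}\setminus\Int\psi(H)$, inside which $\psi'$ is an embedding of the same kind; similarly one has $N_{\psi'}$. A van Kampen and Mayer--Vietoris argument (using that $H \cong W_{1,1}$ is $(n-1)$-connected together with the boundary condition) shows that $N_{\psi}$ and $N_{\psi'}$ are $(n-1)$-connected, $(2n+1)$-dimensional, with boundary diffeomorphic to $S^{2n}$, and carry the homology and the Wall invariants $\lambda_{n,n+1},\lambda_{n+1,n+1},\alpha_{n},\alpha_{n+1}$ of a rank-$(g-1)$ Wall pairing, these being inherited from $W_{g,1}$ with the summand $\psi(H)$ accounting for exactly one rank. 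By Wall's diffeomorphism classification of $(n-1)$-connected $(2n+1)$-manifolds \cite{Wa 63, Wa 66}, $N_{\psi} \cong W_{g-1,1} \cong N_{\psi'}$, and one may choose such a diffeomorphism $N_{\psi} \to N_{\psi'}$ restricting on the interface disk to the identification induced by $\psi'\circ\psi^{-1}$. Gluing it to $\psi'\circ\psi^{-1}\colon \psi(H) \to \psi'(H)$ produces a diffeomorphism $f$ of $W_{g,1}$ with $f\circ\psi = \psi'$, which a final adjustment on $\partial W_{g,1}\cong S^{2n}$ renders isotopic to the identity there.

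The formal ingredients above---the semi-simplicial action, the passage from $0$-connectivity to a zig-zag, the isotopy extension theorem, and the contraction $\rho_{t}$---are exactly as in \cite[Corollary 4.4]{GRW 12}. The substantive point, and the main obstacle, is the last step: identifying the complements $N_{\psi}, N_{\psi'}$ with $W_{g-1,1}$ relative to the interface disk and matching the two boundary-connected-sum splittings. This is the geometric shadow of Proposition \ref{prop: rank 1 wall}, and it is precisely where Wall's classification results enter; it is also the reason one requires $g \geq 3$, so that $|K_{\bullet}(W_{g,1})|$ is at least $0$-connected, rather than merely $g \geq 1$.
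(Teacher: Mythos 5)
The formal scaffolding of your argument (the zig-zag from $0$-connectivity of $|K_{\bullet}(W_{g,1})|$, isotopy extension for steps within a path component, and the contraction $\rho_{t}$ to reduce disjointness of cores to disjointness of images) matches the paper's second paragraph and is fine. The gap is in your treatment of the essential step, namely producing a diffeomorphism of $W_{g,1}$ swapping two \emph{disjoint} embedded copies of $H$. You propose to cut out $\psi(H)$, identify the complement $N_{\psi}$ with $W_{g-1,1}$ via Wall's classification of $(n-1)$-connected $(2n+1)$-manifolds, do the same for $N_{\psi'}$, and then glue a diffeomorphism $N_{\psi}\to N_{\psi'}$ to $\psi'\circ\psi^{-1}$. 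Two things are wrong with this. First, the statement that removing an embedded $H$ from $W_{g,1}$ leaves $W_{g-1,1}$ is, up to cosmetics, exactly the cancellation statement of Corollary \ref{cor: cancellation}, which the paper \emph{deduces from} transitivity; you are importing the harder of the two statements to prove the easier one, and your substitute derivation via \cite{Wa 66} is only gestured at. Wall's classification is an abstract diffeomorphism classification (and for $(s-1)$-connected $(2s+1)$-manifolds it carries genuine subtleties, e.g.\ ambiguity up to connected sum with homotopy spheres), so even granting that $N_{\psi}$ has the homotopy type and Wall invariants of $W_{g-1,1}$, concluding $N_{\psi}\cong W_{g-1,1}$ is not immediate. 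Second, and independently, an abstract diffeomorphism $N_{\psi}\to N_{\psi'}$ does not come with prescribed behavior on the boundary; you need it to agree with $\psi'\circ\psi^{-1}$ along the interface \emph{and} to be isotopic to the identity on the remaining part of $\partial W_{g,1}$, and asserting that "one may choose such a diffeomorphism" plus "a final adjustment on $\partial W_{g,1}$" is precisely the content that needs proof (it amounts to controlling a mapping class group action on boundary parametrizations).

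The paper's proof avoids all of this by never looking at the complement of the handles. It takes $V$ to be a regular neighborhood of $\phi_{0}(H)\cup\phi_{1}(H)\cup\partial W_{g,1}$, observes that $V\cong W_{2,2}$ with two standard copies of $H$ attached to the first boundary component, and writes down an \emph{explicit} diffeomorphism of $W_{2,2}$ (a rotation $\Gamma\in SO(2n+1)$ of the cylinder $[0,1]\times S^{2n}$ interchanging the two connect-sum sites, damped near $\{0\}\times S^{2n}$ by a path $\gamma(t)$ in $SO(2n+1)$) that swaps the two handles, is the identity on the first boundary component, and is isotopic to the identity on the second; it then extends by the identity over $W_{g,1}\setminus V$. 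If you want to salvage your route, you would need to either carry out the Wall-classification argument with boundary control in full, or replace your last step with a localized construction of this kind.
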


\begin{proof} Suppose first that $\phi_{0}$ and $\phi_{1}$ are disjoint. Let $V$ denote the closure of a regular neighborhood of $\phi_{0}(H)\cup \phi_{1}(H) \cup \partial W_{g,1}$. The submanifold $V$ is abstractly diffeomorphic to $W_{2, 2}$  (which is by definition $(\#^{2}S^{n+1}\times S^{n}) - [\Int(D^{2n+1} \sqcup \Int(D^{2n+1})]$) and has two standard copies of $H$ embedded in it, both connected to the first component of the boundary. With this assumption that $\phi_{0}$ and $\phi_{1}$ are disjoint, it is enough to find a diffeomorphism of $W_{2, 2}$ which is the identity on the first boundary component, is isotopic to the identity on the second boundary component, and sends the first embedded copy of $H$ to the second. 

We give an explicit construction of such a diffeomorphism. For what follows we use the standard model of $S^{2n} \subset \R^{2n+1}$ given by 
$$S^{2n} = \{(x_{1}, \dots, x_{2n+1}) \in \R^{2n+1} \; | \; x_{1}^{2} + \cdots + x_{2n+1}^{2} = 1\}.$$ 
We equip $[0,1]\times S^{2n}$ with the metric obtained by taking the product of the Euclidean metric on $[0,1]$, and the metric on $S^{2n}$ induced from the Euclidean metric on $\R^{2n+1}$. 
Now, let $\Gamma \in SO(2n+1)$ be the diagonal matrix with entries $(-1, -1, 1, \dots, 1)$. Let $d_{0}: D^{2n+1} \rightarrow [0,1]\times S^{2n}$ be the embedding of a disk of radius $\frac{1}{4}$ centered at the point $(\tfrac{1}{2}, (1, 0, \dots, 0)) \in [0,1]\times S^{2n}$. We then set $d_{1} := (Id_{[0,1]}\times\Gamma)\circ d_{0}$. It is immediate that $d_{1}$ is an embedding of a disk of radius $\frac{1}{4}$ centered at the point $(\tfrac{1}{2}, (-1, 0, \dots, 0)) \in [0,1]\times S^{2n}$. We form a manifold denoted by $M_{2, 2}$, by connect summing two copies of $S^{n}\times S^{n+1}$ to $[0,1]\times S^{2n}$ at the disks $d_{0}(D^{2n+1})$ and $d_{1}(D^{2n+1})$,
$$M_{2, 2} = (S^{n}\times S^{n+1})\#_{d_{0}}([0, 1]\times S^{2n})\#_{d_{1}}(S^{n}\times S^{n+1}).$$
Clearly $M_{2, 2}$ is abstractly diffeomorphic to $W_{2, 2}$. There is a diffeomorphism $\varphi$ of $M_{2,2}$ which is equal to $Id_{[0,1]}\times\Gamma$ on $[0,1]\times S^{2n}\setminus(d_{0}(D^{2n+1})\cup d_{1}(D^{2n+1}))$, and interchanges the two copies of $S^{n}\times S^{n+1} - \text{int}(D^{2n+1})$. There is an embedded copy of $H$ given by union the first copy of $S^{n}\times S^{n+1}-\text{int}(D^{2n+1})$ with a thickening of the arc $[\frac{1}{2}, 1]\times \{(1, 0, \dots, 0)\}$, and its image under $\varphi$ gives another disjoint embedded copy of $H$. The diffeomorphism $\varphi$, by construction interchanges the two embedded copies of $H$, however $\varphi$ is not the identity map on the boundary component $\{0\}\times S^{2n}$. To fix this, we replace $Id_{[0,1]}\times\Gamma$ in the above construction with a function of the form 
$$(t, x) \; \mapsto \; (t, \gamma(t)\cdot x) \quad \text{for $(t, x) \in [0,1]\times S^{2n}$}$$
where $\gamma: [0,1]\rightarrow SO(2n+1)$ is a path in $SO(2n+1)$ which is the identity on $[0, \epsilon]$ and $\Gamma$ on $[2\epsilon, 1]$ for $0 < \epsilon < \frac{1}{8}$. 

Now suppose that $\phi_{0}$ and $\phi_{1}$ are not disjoint. We put an equivalence relation on the vertices of $K^{c}(W_{g,1})$. We define two vertices, $(\phi_{0}, t_{0})$ and $(\phi_{1}, t_{1})$ to be equivalent if there exists a diffeomorphism $f$ of $W_{g,1}$, isotopic to the identity on the boundary, such that $f \circ \phi_{0} = \phi_{1}$. The previous paragraph implies that two such vertices are equivalent if $\phi_{0}$ and $\phi_{1}$ are disjoint, or equivalently if the set $\{(\phi_{0}, t_{0}), (\phi_{1}, t_{1})\}$ is a $1$-simplex in the complex $K^{c}(W_{g,1})$. If $g \geq 3$, Theorem \ref{lemma: highly connected map} implies that $|K^{c}(W_{g,1})|$ is path-connected, hence any two vertices are connected via a zig-zag of $1$-simplices. It follows that any two vertices of $K^{c}(W_{g,1})$ are equivalent by our equivalence relation. 
\end{proof} 

\begin{corollary} (Cancelation) \label{cor: cancellation}  Let $M$ be a $(2n+1)$-dimensional manifold with boundary parametrized by $S^{2n}$, and suppose there is a diffeomorphism 
$$\varphi: M\# (S^{n}\times S^{n+1}) \longrightarrow W_{g+1, 1}$$
which is the identity on the boundary. Then if $g \geq 3$ there is a diffeomorphism of $M$ with $W_{g,1}$ which is the identity on the boundary, with respect to the parametrization by $S^{2n}$.  
\end{corollary}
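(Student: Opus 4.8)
The plan is to deduce this from the Transitivity Corollary (Corollary \ref{cor: transitivity}): the essential point is that cutting a standardly embedded copy of $H$ (together with a collar on the boundary) out of $W_{k+1,1}$ gives back $W_{k,1}$, so the diffeomorphism $\varphi$ lets us transport the cut that recovers $M$ from $M\#(S^{n}\times S^{n+1})$ to a cut of $W_{g+1,1}$, and transitivity then moves this cut to the standard one.

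First I would fix a \emph{standard} embedded handle in $W_{g+1,1}$. By construction $W_{g+1,1}$ is the boundary connected sum of $g+1$ copies of $H$, so one of the reference embeddings $\psi_{0}: H \hookrightarrow W_{g+1,1}$ appearing in a simplex of $K^{c}(W_{g+1,1})$ provides such a copy. Let $N_{\mathrm{std}}$ be the closure of a regular neighborhood of $\psi_{0}(H)\cup \partial W_{g+1,1}$ in $W_{g+1,1}$. Since $H$ is a codimension-zero submanifold deformation retracting onto $C\simeq S^{n}\vee S^{n+1}$ and meets $\partial W_{g+1,1}$ precisely in the standard coordinate disk, $N_{\mathrm{std}}$ is diffeomorphic to $W_{1,2}$, one of its two boundary spheres carrying the parametrization inherited from $\partial W_{g+1,1}$; and $W_{g+1,1}\setminus\Int(N_{\mathrm{std}})$ is diffeomorphic, rel its boundary, to $W_{g,1}$, compatibly with the parametrization of its boundary by $S^{2n}$. (This is exactly the statement that cutting off a standard handle inverts the stabilization map $s_{g}$ of \eqref{eq: stab map}.) Next I transfer this picture through $\varphi$: inside $M\#(S^{n}\times S^{n+1})$ there is an embedding $\iota: H\hookrightarrow M\#(S^{n}\times S^{n+1})$ built from the connected-sum sphere $S^{n}\times S^{n+1}$ together with a thickened arc to $\partial M$, chosen to have the standard boundary behavior near $\partial M$; since $M\#(S^{n}\times S^{n+1})$ is obtained from $M$ by deleting an open interior disk and gluing in $W_{1,1}$, the closure of the complement of a regular neighborhood $N_{0}$ of $\iota(H)\cup\partial M$ is diffeomorphic to $M$ rel boundary. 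Put $\phi_{0}:=\varphi\circ\iota$. As $\varphi$ is the identity on the boundary, $\phi_{0}$ has the standard boundary behavior, hence is a vertex of $K^{c}(W_{g+1,1})$, and $\varphi(N_{0})$ is a regular neighborhood of $\phi_{0}(H)\cup\partial W_{g+1,1}$; thus $\varphi$ restricts to a diffeomorphism, rel boundary, from $M$ onto $W_{g+1,1}\setminus\Int(\varphi(N_{0}))$.

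Now I apply transitivity. Since $g\geq 3$, we have $g+1\geq 3$, so Corollary \ref{cor: transitivity}, applied to the ambient manifold $W_{g+1,1}$, produces a diffeomorphism $F$ of $W_{g+1,1}$, isotopic to the identity on the boundary, with $\psi_{0}=F\circ\phi_{0}$; by uniqueness of regular neighborhoods we may arrange $F(\varphi(N_{0}))=N_{\mathrm{std}}$, so that $F$ restricts to a diffeomorphism $W_{g+1,1}\setminus\Int(\varphi(N_{0}))\to W_{g+1,1}\setminus\Int(N_{\mathrm{std}})$. Composing the three diffeomorphisms
$$M\;\cong\;W_{g+1,1}\setminus\Int(\varphi(N_{0}))\;\cong\;W_{g+1,1}\setminus\Int(N_{\mathrm{std}})\;\cong\;W_{g,1}$$
yields the desired diffeomorphism. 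The middle map is only isotopic to the identity on the frontier sphere (it comes from $F$), but this isotopy may be absorbed into a collar of that sphere, so the composite can be taken to be literally the identity on the boundary with respect to the $S^{2n}$-parametrizations. This proves the corollary.

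I expect the main obstacle to be the geometric bookkeeping underlying the cutting facts of the middle paragraph: checking carefully that removing a regular neighborhood of $H$ together with a boundary collar from $W_{k+1,1}$ yields $W_{k,1}$, and that the analogous operation on $M\#(S^{n}\times S^{n+1})$ yields $M$, all rel boundary and compatibly with the $S^{2n}$-parametrizations and with the precise boundary conventions built into the definitions of $H$ and of the vertices of $K^{c}$. These are elementary manipulations of regular neighborhoods and of boundary connected sums, but matching every convention takes care. The only genuinely nontrivial input — that any two standardly embedded copies of $H$ in $W_{g+1,1}$ are interchanged by a diffeomorphism isotopic to the identity on the boundary — is precisely Corollary \ref{cor: transitivity}.
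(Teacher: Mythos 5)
Your proposal is correct and follows essentially the same route as the paper: fix a standard embedding of $H$ in $W_{g+1,1}$ whose complement is $W_{g,1}$, produce a second embedding of $H$ from the connect-sum factor of $M\#(S^{n}\times S^{n+1})$ whose complement recovers $M$, and invoke Corollary \ref{cor: transitivity} (with $g+1\geq 3$) to interchange them. The only difference is cosmetic bookkeeping: you cut out regular neighborhoods of $H\cup\partial$ and absorb the boundary isotopy into a collar, whereas the paper removes the copies of $W_{1,1}$ directly, obtains a diffeomorphism $M-\Int(D^{2n+1})\cong W_{g,1}-\Int(D^{2n+1})$, and then fills the extra boundary sphere back in with a disk and applies isotopy extension.
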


\begin{proof} Recall from Section \ref{The Complex of Embedded Handles} that $(H = [0,1]\times D^{2n})\cup_{\beta} W_{1,1}$ where $\beta: \{1\}\times D^{2n} \rightarrow \partial W_{1,1}$ is an embedding. 
Choose an embedding 
$$\gamma: H \longrightarrow W_{g+1,1}$$
which satisfies $\gamma(\{0\}\times D^{2n}) \subset \partial W_{g+1,1}$,
and such that 
there exists a diffeomorphism 
$$\text{Cl}(W_{g+1,1} - \gamma(H)) \cong W_{g,1}$$
where $\text{Cl}$ denotes the closure.  
Notice that since $W_{g,1}$ is diffeomorphic to the boundary-connect-sum of $g+1$ copies of $H$, one can simply choose $\gamma$ to be the embedding given by inclusion of the last factor of $H$ in the boundary-connect-sum decomposition. 

From the connect sum $M\# (S^{n}\times S^{n+1})$, there is a cannonical embedding 
$$j: W_{1,1} \hookrightarrow M\# (S^{n}\times S^{n+1}).$$ 
By choosing a thickening of an arc which connects $j(\partial W_{1,1})$ to the boundary of the manifold 
$M\# (S^{n}\times S^{n+1})$, we obtain an extension of $j: W_{1,1} \hookrightarrow M\# (S^{n}\times S^{n+1})$ to an embedding
$$f: H \longrightarrow M\#(S^{n+1}\times S^{n})$$
with $f(\{0\}\times D^{2n}) \subset \partial(M\#(S^{n+1}\times S^{n}))$. 
It follows from the construction of $f$, together with the fact that the boundary of $M\#(S^{n+1}\times S^{n})$ is diffeomorphic to $S^{2n}$, that there is a diffeomorphism
$$\text{Cl}(M\# (S^{n}\times S^{n+1}) - f(H)) \; \cong \; M - \Int(D^{2n+1}).$$ 
With 
$\varphi: M\# (S^{n}\times S^{n+1}) \longrightarrow W_{g+1, 1}$
the diffeomorphism from the statement of the corollary, we have two embeddings
$$\varphi\circ f: H \longrightarrow W_{g+1, 1} \quad \text{and} \quad \gamma: H \longrightarrow W_{g+1, 1}.$$
By Corollary \ref{cor: transitivity} there is a diffeomorphism $q$ of $W_{g+1, 1}$ isotopic to the identity on the boundary so that $\gamma = q\circ \varphi\circ f$. We then obtain a diffeomorphism 
$$q\circ \varphi: M\# (S^{n}\times S^{n+1}) \; \longrightarrow \; W_{g+1, 1}$$
such that the composition 
$$\gamma^{-1} \circ q\circ \varphi \circ f: H \longrightarrow H$$
is the identity map. In particular, it maps the submanifold  $W_{1,1} \subset H$ to itself identically. 
So then, after removing the interiors of $f(W_{1,1})$ and $\gamma(W_{1,1})$, the restriction of $q\circ \varphi$ yields a diffeomorphism, 
$$\xymatrix{
M - \Int(D^{2n+1}) \ar[rr]^{\cong} && W_{g,1} - \Int(D^{2n+1})
}$$
which is equal to the identity on the new boundary component corresponding to the removed disk, 
and is isotopic to the identity on the old boundary component  (with respect to the parametrization by $S^{2n}$). We may then fill in the new boundary component with a disk and use isotopy extension to make the diffeomorphism be the identity, with respect to the parametrization by $S^{2n}$, on the remaining boundary component. 
\end{proof}

In the final section we will need the following modification of $K_{\bullet}(W_{g,1})$. 

\begin{defn} Let $\bar{K}_{\bullet}(W_{g,1}) \subset K_{\bullet}(W_{g,1})$ be the sub-semi-simplicial space consisting of all simplices $((\phi_{0}, t_{0}), \dots, (\phi_{p}, t_{p}))$ such that the intersections $\phi_{i}(H) \cap \phi_{j}(H)$ are empty for all $i, j \in \{0, \dots, p\}$. \end{defn}

\begin{corollary} \label{connectivity of K- bar} The space $|\bar{K}_{\bullet}(W_{g,1})|$ is $\frac{1}{2}(g - 3)$-connected. \end{corollary}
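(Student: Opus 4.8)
The plan is to show that the inclusion of semi-simplicial spaces $\iota: \bar{K}_{\bullet}(W_{g,1}) \hookrightarrow K_{\bullet}(W_{g,1})$ induces a weak homotopy equivalence on geometric realizations; granting this, the corollary is immediate from the preceding lemma, which gives that $|K_{\bullet}(W_{g,1})|$ is $\frac{1}{2}(g-3)$-connected.

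The tool is the isotopy $\rho_{t}: H \to H$ ($t \in [0,\infty)$) fixed in Section \ref{The Complex of Embedded Handles}: it has $\rho_{0} = \mathrm{Id}_{H}$, pushes $H$ into an arbitrarily small neighborhood of the core $C$ (property ii.), is the identity on an open neighborhood of $C$ (property iii.), and, being a self-embedding of $H$, satisfies $\rho_{t}(H) \subseteq H$. For a vertex $(t_{i}, \phi_{i})$ of $K_{\bullet}(W_{g,1})$, post-composition gives a new vertex $(t_{i}, \phi_{i}\circ\rho_{t})$: it is an embedding, the collar condition near $\{0\}\times D^{2n} \subset C$ persists since $\rho_{t}$ is the identity there, $(\phi_{i}\circ\rho_{t})(C) = \phi_{i}(C)$ since $\rho_{t}$ fixes $C$ pointwise, and $(\phi_{i}\circ\rho_{t})(H) \subseteq \phi_{i}(H)$. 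Hence, for any $T \ge 0$, the assignment $R_{T}: (t_{i}, \phi_{i}) \mapsto (t_{i}, \phi_{i}\circ\rho_{T})$ defines a semi-simplicial self-map of both $K_{\bullet}(W_{g,1})$ and $\bar{K}_{\bullet}(W_{g,1})$, and $s \mapsto R_{sT}$ is a homotopy from the identity to $R_{T}$ through such self-maps. The point is that $R_{T}$ upgrades ``disjoint cores'' to ``disjoint handles'': given a $p$-simplex $((t_{0},\phi_{0}),\dots,(t_{p},\phi_{p}))$, the cores $\phi_{i}(C)$ are pairwise disjoint compacta, so choosing pairwise disjoint open neighborhoods $U_{i} \supset \phi_{i}(C)$ and then $T$ large enough (by property ii. and compactness of $H$) that $(\phi_{i}\circ\rho_{T})(H) \subset U_{i}$ for all $i$, the simplex is carried into $\bar{K}_{p}(W_{g,1})$.

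To conclude that $\iota$ is a weak equivalence, let $k \ge 0$ and $f: S^{k} \to |K_{\bullet}(W_{g,1})|$; its image is compact, hence contained in the image of $\coprod_{p \le N} A_{p}\times\Delta^{p}$ for some $N$ and compact $A_{p} \subset K_{p}(W_{g,1})$. Over this compact family the minimal distance between distinct cores is bounded below and the embeddings have uniformly bounded first derivatives, so a single $T$ works for every simplex met by $f$; then $R_{sT}$ homotopes $f$ into $|\bar{K}_{\bullet}(W_{g,1})|$, proving $\iota_{*}$ surjective on $\pi_{k}$. For injectivity, a nullhomotopy $F: D^{k+1} \to |K_{\bullet}(W_{g,1})|$ of a map $f: S^{k} \to |\bar{K}_{\bullet}(W_{g,1})|$ is pushed into $|\bar{K}_{\bullet}(W_{g,1})|$ by the same homotopy $R_{sT}$ (with $T$ chosen for the finitely many simplices met by $F$); since $R_{s}$ preserves $\bar{K}_{\bullet}(W_{g,1})$, the restriction of this homotopy to $S^{k}$ stays in $|\bar{K}_{\bullet}(W_{g,1})|$, so $R_{T}\circ F$ exhibits $R_{T}\circ f \simeq f$ as nullhomotopic there. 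This is the ``shrinking'' device of \cite{GRW 12}.

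The only real work is the uniformity in the third paragraph: choosing the neighborhoods of the cores and the time $T$ compatibly over a compact family of simplices and with the face maps. This is handled purely by compactness (of $H$, $C$, and the parameter sphere and disk) together with the fact that ``the handles $\phi_{i}(H)$ are pairwise disjoint'' is an open condition; no manifold topology beyond the properties of $\rho_{t}$ enters here, all of that having been spent already in Theorem \ref{lemma: highly connected map} and Theorem \ref{thm: pi cohen mac}. Alternatively, one can re-run the proof of Theorem \ref{lemma: highly connected map} directly, observing that the lift $\phi: H \hookrightarrow W_{g,1}$ of a vertex $(x,y)$ is assembled from the plumbing $M_{x,y}$ and a thickened arc, and that these thickenings may be taken thin enough that $\phi(H)$ is disjoint from all previously constructed handles, not merely from their cores.
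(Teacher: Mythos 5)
Your proof is correct and uses the same key idea as the paper: the shrinking isotopy $\rho_{t}$ converts tuples with disjoint cores into tuples with disjoint images of $H$, so the inclusion $\bar{K}_{\bullet}(W_{g,1}) \hookrightarrow K_{\bullet}(W_{g,1})$ induces a weak equivalence on realizations. The paper phrases this as a level-wise weak equivalence rather than running the compactness argument on the realization directly, but this is only a difference in packaging.
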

\begin{proof} Precomposing with the isotopy $\rho_{t}$, any tuple of embeddings with disjoint cores eventually becomes disjoint. It follows that the inclusion $\bar{K}_{\bullet}(W_{g,1}) \hookrightarrow K_{\bullet}(W_{g,1})$ is a level-wise weak equivalence. \end{proof}

\section{High-Connectivity of the Complex $K^{\pi}(W_{g,1})$} \label{section: The Intersection Complex}
In this section we prove Theorem \ref{thm: pi cohen mac}.
This theorem is about the simplicial complex $K^{\pi}(W_{g,1})$. However, it will be convenient for our purposes to work with partially ordered sets instead of simplicial complexes. 

We review the relationship between simplicial complexes, semi-simplicial sets, and posets. Let $K$ be a simplicial complex. Associated to $K$ is the semi-simplical set $K_{\bullet}$ whose $p$-simplices are the injective simplicial maps $\triangle^{p} \rightarrow K$, i.e. ordered $(p+1)$-tuples of vertices in $K$ spanning a $p$-simplex. There is a natural surjection $|K_{\bullet}| \rightarrow |K|$, and any choice of total order on the set of vertices of $K$ induces a section $|K| \rightarrow |K_{\bullet}|$. In particular, $|K|$ is at least as connected as $|K_{\bullet}|$. 
Associated to the semi-simplicial set $K_{\bullet}$ is a poset denoted by $\widehat{K}_{\bullet}$ whose elements are the simplices of $K_{\bullet}$ and the ordering is given by setting 
$\sigma \leq \alpha \quad \text{if \; $d_{i_{1}}\circ \cdots \circ d_{i_{k}}(\alpha) \; = \; \sigma$}$ 
for some sequence $i_{1}, \dots, i_{k}$, i.e. $\sigma \leq \alpha$ if $\sigma$ is a face of $\alpha$. 
We must now define the geometric realization of a poset. 
\begin{defn} \label{defn: geo realization of poset} Let $E$ any poset. We define $|E|$ to be the geometric realization of the semi-simplicial set whose $p$-simplices are the ordered $(p+1)$-tuples $x_{0} < x_{1} < \cdots < x_{p}$ of elements of $E$. We will refer to $|E|$ as the geometric realization of of the poset $E$. \end{defn}
As above, let $\widehat{K}_{\bullet}$ denote the poset associated to the semi-simplicial space $K_{\bullet}$. It is easy to check that the geometric realization $|\widehat{K}_{\bullet}|$ is the \textit{barycentric sub-division} of $|K_{\bullet}|$. In particular, there is a homotopy equivalence
$
|\widehat{K}_{\bullet}| \sim |K_{\bullet}|.
$

From the above discussion it follows that in order to prove that $|K^{\pi}(W_{g,1})|$ is $\frac{1}{2}(g - 3)$-connected, it will suffice to prove that the geometric realization of the associated  poset $\widehat{K}^{\pi}_{\bullet}(W_{g,1})$ is $\frac{1}{2}(g - 3)$-connected. 

\begin{Notation} If $K$ is a simplicial complex we will denote by $K_{\bullet}$ the semi-simplicial complex associated to $K$ and we will denote by $\widehat{K}_{\bullet}$ the poset associated to $K_{\bullet}$. For any poset $E$ we will denote by $|E|$ the geometric realization as defined in Definition \ref{defn: geo realization of poset}. 
\end{Notation} 

\subsection{Some Basic Results About Posets} \label{Poset Basics} 
We now review from \cite{Ch 87} some basic results on posets that we will need in order to prove that the poset $\widehat{K}^{\pi}_{\bullet}(W_{g,1})$ is  $\frac{1}{2}(g - 3)$-connected. We use many of the constructions and results from \cite{Ch 87} and use much of the same notation.

For a poset $E$ and an element $x \in
E$, we denote,
$$\begin{aligned} E_{< x} := \{ y \in E \; | \; y < x \}, &\quad E_{>
    x} := \{ y \in E \; | \; y > x \}, 
\\ E_{\leq x} := \{ y \in E \;
  | \; y \leq x \}, &\quad E_{\geq x} := \{ y \in E \; | \; y \geq x
  \}.
\end{aligned}$$
It is immediate that $E_{< x}, E_{< x}, E_{< x}$, and $E_{< x}$ are all sub-posets of $E$. 
  
For any set $X$, we denote by $\mathcal{O}(X)$ the poset comprised of
non-empty, finite, ordered lists of distinct elements in $X$. The
partial ordering is given by setting,
$$(w_{1}, \dots, w_{j}) \leq (v_{1}, \dots, v_{k})$$
if $(w_{1}, \dots, w_{j})$ is an ordered sub-list of $(v_{1}, \dots, v_{k})$. 
We say that a sub-poset $F \subset \mathcal{O}(X)$ satisfies the \textit{chain condition} if 
\begin{enumerate}
\item[i.] $v \in F$ implies $\mathcal{O}(X)_{<v} \subset F$, and 
\item[ii] $v \in F$ implies every permutation of the sequence $v = (v_{1}, \dots, v_{k})$ is in $F$. 
\end{enumerate}

If $x < w$ in $\mathcal{O}(X)$, let $w\setminus x$ denote the subsequence of $w$ complimentary to $x$. 
Let $F \subseteq \mathcal{O}(X)$ be a sub-poset and let $v := (v_{1}, \dots, v_{k})$ be an element of $\mathcal{O}(X)$. We define
$$\begin{aligned}
F_{v} \; &:= \; \{(w_{1}, \dots, w_{j}) \in F \; | \; (w_{1}, \dots, w_{j}, v_{1}, \dots, v_{k}) \in F\}\\
Z^{n}F \;&:= \; \{w \in \mathcal{O}(X \sqcup\{z_{1}, \dots, z_{n}\}) \; | \; (z_{1}, \dots, z_{n}) < w \; \text{and} \; w \setminus z \in F\}.
\end{aligned}$$
\begin{comment}
$F_{v}$ the sub-poset of $F$ consisting of sequences $(w_{1}, \dots, w_{k})$ such that the sequence 
$$(v_{1}, \dots, v_{k}, w_{1}, \dots, w_{j}) \in \mathcal{O}(X)$$ 
is an element of $F$.

If $v < w$ in $\mathcal{O}(X)$, let $w \setminus v$ denote the subsequence of $w$ complimentary to $v$. We define the poset,
$$Z^{n}F \; = \; \{w \in \mathcal{O}(X \sqcup\{z_{1}, \dots, z_{n}\}) \; | \; (z_{1}, \dots, z_{n}) < w \; \text{and} \; w \setminus z \in F\}.$$
\end{comment}
The posets $F_{v}$ and $Z^{n}F$ arrise 
when studying the \textit{link} of a vertex in $|F|$. In particular, the link of $v$ in $|F|$ is the geometric realization of the poset 
$$\text{link}_{F}(v) \; = \; \{w \in F \; | \; v < w \; \text{or} \; v > w\} \; = \; F_{<v}\star F_{>v}$$
where $\star$ denotes the join.
If $F$ satisfies the chain condition, then $|F_{<v}|$ is homeomorphic to a $(k-2)$-sphere and $F_{>v}$ is isomorphic to $Z^{k}(F_{v})$, with isomorphism given by sending $v_{i}$ to $z_{i}$. Hence there is a homeomorphism of geometric realizations
\begin{equation} \label{eq: link homeomorphism} |\text{link}_{F}(v)| \; \cong \; \Sigma^{k-1}|Z^{k}F_{v}|. \end{equation}
For a non-empty set $S$, define
$$F\langle S \rangle \; = \; \{((w_{1}, s_{1}), \dots, (w_{j}, s_{j})) \in \mathcal{O}(X\times S) \; | \; (w_{1}, \dots, w_{j}) \in F\}.$$
If $v = (v_{1}, \dots, v_{k}) \in \mathcal{O}(X)$, let 
$|v| = k = (\text{length of $v$}).$
We now state several technical lemmas. See \cite{Ch 87} and \cite{Ch 84} for details. For the following we assume that $F \subset \mathcal{O}(X)$ satisfies the chain condition. 

\begin{lemma} \label{lemma: null homotopy} For any $v \in F$, the inclusion $|F_{v}| \hookrightarrow |F|$ is nullhomotopic. \end{lemma}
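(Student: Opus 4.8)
The statement to prove is Lemma \ref{lemma: null homotopy}: for $F \subset \mathcal{O}(X)$ satisfying the chain condition and any $v \in F$, the inclusion $|F_v| \hookrightarrow |F|$ is nullhomotopic. The natural strategy is to exhibit an explicit contraction of the image, using $v$ itself as a ``cone point.'' Write $v = (v_1, \dots, v_k)$. The key observation is that for any $w = (w_1, \dots, w_j) \in F_v$, the concatenation $w * v := (w_1, \dots, w_j, v_1, \dots, v_k)$ lies in $F$ by definition of $F_v$, and moreover $w * v \geq w$ and $w * v \geq v$ in $\mathcal{O}(X)$. (Here one needs that $w$ and $v$ involve distinct elements of $X$, which again follows from $w * v \in \mathcal{O}(X)$, the poset of lists of \emph{distinct} elements.)

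First I would set up the homotopy at the level of posets. The assignment $w \mapsto w * v$ defines an order-preserving map $F_v \to F$, and since $w \leq w * v$ for all $w$, the inclusion $|F_v| \hookrightarrow |F|$ is homotopic to the map $|F_v| \to |F|$ induced by $w \mapsto w * v$: two order-preserving maps of posets that are everywhere comparable induce homotopic maps on geometric realizations (the standard ``poset homotopy'' lemma, applied to the zig-zag $w \leq w * v$). Next, since $w * v \geq v$ for every $w \in F_v$, the map $w \mapsto w * v$ factors through the sub-poset $F_{\geq v} \subset F$, and the constant map at $v$ satisfies $v \leq w * v$ as well; hence the composite $|F_v| \to |F_{\geq v}| \hookrightarrow |F|$ is homotopic to the constant map at $v$, again by the comparability lemma. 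Composing these two homotopies shows $|F_v| \hookrightarrow |F|$ is nullhomotopic.

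The main point requiring care — the ``hard part,'' though it is really just bookkeeping — is checking that the concatenation $w \mapsto w * v$ is well-defined as a map into $F$ and is order-preserving. Well-definedness is exactly the defining property of $F_v$. For order-preservation: if $w \leq w'$ in $F_v$, i.e. $w$ is an ordered sublist of $w'$, then $w * v$ is an ordered sublist of $w' * v$, so $w * v \leq w' * v$ in $\mathcal{O}(X)$, and both lie in $F$. One should also note that all three posets appearing ($|F_v|$, $|F_{\geq v}|$, $|F|$) and the two comparability homotopies are compatible with the inclusions, so the composite homotopy genuinely joins the inclusion to a constant. The chain condition is used implicitly to guarantee that the relevant sublists of elements of $F$ are again in $F$, so that the intermediate posets and maps are honestly defined; I would cite \cite{Ch 87} and \cite{Ch 84} for the poset-homotopy lemma rather than reprove it.
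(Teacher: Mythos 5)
Your argument is correct. The paper itself gives no proof of this lemma --- it is stated as one of several technical facts with a pointer to \cite{Ch 87} and \cite{Ch 84} --- and your proof is precisely the standard one found there: the concatenation $w \mapsto w \ast v$ is a well-defined order-preserving map $F_{v}\to F$ dominating both the inclusion and the constant map at $v$, so the comparability lemma for poset maps gives the nullhomotopy. (One small remark: the chain condition is not actually needed here; the only memberships used are $w\in F$, $w\ast v\in F$, and $v\in F$, all of which hold by the definitions of $F_{v}$ and $\mathcal{O}(X)$.)
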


\begin{lemma}  \label{lemma: homotopy isomorphisms} Suppose $d$ is an integer such that for any $v \in F$, $F_{v}$ is $(d-|v|)$-connected and $F$ is $\text{min}(1, d-1)$-connected. Let $s_{0} \in S$. Then the maps,
$$\begin{aligned}
i_{n}: F \longrightarrow Z^{n}F, \quad & (v_{1}, \dots, v_{k}) \mapsto (v_{1}, \dots, v_{k}, z_{1}, \dots, z_{n}),\\
(s_{0})_{*}: F \longrightarrow F\langle S\rangle, \quad & (v_{1}, \dots, v_{k}) \mapsto ((v_{1}, s_{0}), \dots, (v_{k}, s_{0})), \\
i_{n}\circ (s_{0})_{*}: F \longrightarrow Z^{n}(F\langle S\rangle) & 
\end{aligned}$$
induce isomorphisms on homotopy groups $\pi_{j}$ for $j \leq d$. \end{lemma}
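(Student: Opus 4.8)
The plan is to follow the strategy of R. Charney in \cite{Ch 87} and prove the three statements of Lemma \ref{lemma: homotopy isomorphisms} by a single induction on $n$, reducing everything to the case $n=1$, which is itself handled by a Mayer--Vietoris/homotopy-colimit argument built from Lemma \ref{lemma: null homotopy}. The key observation is that the three maps $i_n$, $(s_0)_*$, and $i_n \circ (s_0)_*$ are all compatible with the relevant forgetful/augmentation maps, so that proving one of them is a $\pi_j$-isomorphism for $j \le d$ often gives the others by a two-out-of-three argument once the iterated constructions $Z^n(-)$ and $(-)\langle S\rangle$ are known to preserve the hypotheses.

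First I would establish that the hypotheses of the lemma are inherited by the new posets. Precisely, I would check that if $F \subset \mathcal{O}(X)$ satisfies the chain condition and the connectivity hypothesis ``$F_v$ is $(d-|v|)$-connected for all $v$, and $F$ is $\min(1,d-1)$-connected,'' then so do $F\langle S\rangle \subset \mathcal{O}(X\times S)$ and $ZF := Z^1 F \subset \mathcal{O}(X \sqcup \{z_1\})$, possibly with $d$ replaced by $d$ or $d-1$ in a controlled way; here one uses the identifications $(F\langle S\rangle)_{(v,s)} \cong (F_v)\langle S\rangle$ and $(ZF)_w \cong$ an appropriate $F_{w'}$ or $Z(F_{w'})$, together with the fact that $|F_{<v}|$ is a sphere so that links in $|F|$ are suspensions of $|Z^k F_v|$ via \eqref{eq: link homeomorphism}. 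Then I would treat the base case $n=1$ of $i_1: F \to ZF$: using that $ZF$ is covered by the two sub-posets $\{w : z_1 < w\}$ and $(ZF)_{\le (\text{top})}$-type pieces, or more cleanly by writing $|ZF|$ as a homotopy pushout of $|F|$, the cone on $|F_{z_1}| = |F|$ (cf. Lemma \ref{lemma: null homotopy}), and $|F|$ again, one gets a cofiber-sequence-type description showing $i_1$ is $d$-connected. The maps $(s_0)_*$ and $i_n\circ(s_0)_*$ then follow: $(s_0)_*$ admits a retraction-up-to-homotopy coming from a forgetful map $\mathcal{O}(X\times S) \to \mathcal{O}(X)$ and one argues on fibers, while $i_n \circ (s_0)_*$ is the composite of two $\pi_j$-isomorphisms (for $j \le d$) once $F\langle S\rangle$ is shown to satisfy the inductive hypothesis.

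For the inductive step I would write $Z^{n}F = Z(Z^{n-1}F)$ and apply the $n=1$ case to the poset $Z^{n-1}F$, after verifying via the inheritance step that $Z^{n-1}F$ still satisfies the connectivity hypothesis with the same $d$ (this is the point of phrasing the hypothesis uniformly over all $v$: the bound ``$(d-|v|)$-connected'' is exactly what makes it stable under iterating $Z$). Composing $F \xrightarrow{i_{n-1}} Z^{n-1}F \xrightarrow{i_1} Z^n F$ and using that both arrows are $\pi_j$-isomorphisms for $j \le d$ gives the claim for $i_n$; the mixed statement is handled identically by applying the $(s_0)_*$ case to $Z^n F$ or the $i_n$ case to $F\langle S\rangle$.

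The main obstacle I expect is the inheritance/bookkeeping step: showing carefully that $Z^{n-1}F$ and $F\langle S\rangle$ satisfy the chain condition and, more delicately, that the quantitative connectivity bound $F_v$ is $(d-|v|)$-connected is preserved without degrading $d$, since a naive argument using \eqref{eq: link homeomorphism} loses a dimension at each suspension and one must track this against the length $|v|$ precisely. The cleanest route is probably to prove a single auxiliary lemma, as in \cite[Section 2]{Ch 87}, stating that $(Z^n F)_w$ is isomorphic to $Z^{m}(F_{w'})$ for suitable $m \le n$ and $w'$ with $|w'| = |w| - (n-m)$, and likewise for $F\langle S\rangle$, so that the connectivity hypothesis propagates formally; given that lemma the rest of the argument is a routine homotopy-pushout computation together with two-out-of-three for the three maps. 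I would also need to be slightly careful at the low end, using the $\min(1,d-1)$-connectivity of $F$ to get the pushout argument started (i.e. to know $|F|$ is simply connected when $d \ge 2$, handling $\pi_1$ separately when $d = 1$), exactly as in Charney's treatment.
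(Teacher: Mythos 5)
First, a point of reference: the paper does not prove this lemma at all --- it is quoted from R.~Charney's work and the text simply points to \cite{Ch 87} and \cite{Ch 84} for the details. So your sketch has to stand on its own, and its organizational skeleton is indeed the right one (and is Charney's): verify that the hypotheses are inherited by $F\langle S\rangle$ and by the iterates of $Z$, reduce to a base case, and handle the three maps together. Your proposed auxiliary lemma identifying $(Z^{n}F)_{w}$ with a $Z^{m}(F_{w'})$ is exactly the right bookkeeping, and in fact the connectivity bound \emph{improves} rather than degrades under $Z$, since $(Z^{n}F)_{w}\cong F_{w\setminus z}$ with $|w\setminus z|=|w|-n$. (A small wrinkle you should fix: $Z^{n}F$ is not literally $Z(Z^{n-1}F)$, because the latter allows $z_{n}$ to occur before $z_{1},\dots,z_{n-1}$; the induction on $n$ must be set up to accommodate this.)

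The genuine gap is that the two base cases --- $i_{1}\colon F\to ZF$ and $(s_{0})_{*}\colon F\to F\langle S\rangle$ --- are precisely where the hypothesis ``$F_{v}$ is $(d-|v|)$-connected'' has to do work, and your sketch does not actually use it there. Your homotopy-pushout description of $|ZF|$ cannot be right: a gluing of $|F|$, $C|F|$ and $|F|$ of the kind described is contractible (or has the wrong type), and there is no cone point to cone off, since $(z_{1})$ is \emph{not} an element of $ZF$ (the definition requires $w\setminus z\in F$, and $F$ contains no empty list); moreover $F$ is not even a subposet of $ZF$, as every element of $ZF$ contains $z_{1}$. For $(s_{0})_{*}$, the forgetful map $p\colon F\langle S\rangle\to F$ satisfies $p\circ(s_{0})_{*}=\mathrm{id}$ and gives split injectivity for free, but ``arguing on fibers'' cannot give surjectivity: the fibers of $p$ are discrete label sets and $p$ is not a weak equivalence in general (take $F$ a single vertex $(a)$ and $|S|\geq 2$; then $|F\langle S\rangle|$ is $|S|$ discrete points, consistent with the lemma only because the hypothesis then forces $d\leq -1$). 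The actual content of the lemma is the simplicial general-position argument in the style of Theorem \ref{thm: cohen macaulay} and of van der Kallen/Charney: triangulate a map of $S^{j}$ (resp.\ $D^{j+1}$) into $|Z^{n}F|$ or $|F\langle S\rangle|$, call a simplex bad if it is not in the image of the map in question, and remove bad simplices one at a time by mapping their links into posets of the form $Z^{k}F_{v}$, whose connectivity is exactly what the hypothesis controls via (\ref{eq: link homeomorphism}), together with Lemma \ref{lemma: null homotopy} to kill the resulting obstructions. That argument is absent from your sketch, so as written the proposal does not prove the lemma.
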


\begin{lemma}  \label{lemma: intersection connectivity} Suppose $Y \subseteq X$. If $d$ is an integer such that $F\cap \mathcal{O}(Y)$ is $d$-connected and $F_{v} \cap\mathcal{O}(Y)$ is $(d - |v|)$-connected for all $v \in F$, then $F$ is $d$-connected. 
\end{lemma}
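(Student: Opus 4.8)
The plan is to prove the case in which $X\setminus Y$ consists of a single element $a$ directly, and to bootstrap from it to the general statement; the one-element case is handled by removing from $F$ the single troublesome vertex $(a)$ and then retracting what remains onto the full subcomplex on the sequences lying in $\mathcal O(Y)$.

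\textbf{Reduction to $|X\setminus Y|=1$.} Since any sphere mapping into $|F|$ has image in a finite subcomplex, hence in $|F\cap\mathcal O(X_{0})|$ for some finite $X_{0}$ with $Y\subseteq X_{0}\subseteq X$, and since $F\cap\mathcal O(X_{0})$ again satisfies the chain condition and inherits both connectivity hypotheses (intersecting each of them with $\mathcal O(X_{0})$ changes nothing, as $Y\subseteq X_{0}$), it suffices to assume $|X\setminus Y|<\infty$ and to induct on $|X\setminus Y|$. The case $Y=X$ is the assertion itself. For the inductive step, pick $a\in X\setminus Y$ and put $Y^{+}:=Y\cup\{a\}$; it is then enough to verify that $F$ satisfies the hypotheses of the lemma relative to $Y^{+}$, and both ``$F\cap\mathcal O(Y^{+})$ is $d$-connected'' and ``$F_{v}\cap\mathcal O(Y^{+})$ is $(d-|v|)$-connected'' follow from the one-element case of the lemma (treated in the next two paragraphs) applied to the chain-condition subposets $F\cap\mathcal O(Y^{+})$ and $F_{v}\cap\mathcal O(Y^{+})$ of $\mathcal O(Y^{+})$, using the identity $(F_{v})_{u}=F_{u\ast v}$ to match up the slice hypotheses.

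\textbf{Removing the vertex $(a)$.} Now assume $X=Y\sqcup\{a\}$, and call a vertex of $F$ \emph{impure} if the sequence contains $a$, so that $F':=F\cap\mathcal O(Y)$ is exactly the full subcomplex on the pure vertices. Since $(a)$ has no proper nonempty subsequence, $\operatorname{link}_{F}((a))=F_{>(a)}$, and by the standard identification this poset is isomorphic to $Z^{1}(F_{(a)})$. Because $F_{(a)}=F_{(a)}\cap\mathcal O(Y)$ is $(d-1)$-connected and its slices $(F_{(a)})_{v}=F_{v\ast(a)}\subseteq\mathcal O(Y)$ are $(d-1-|v|)$-connected, the hypotheses of Lemma~\ref{lemma: homotopy isomorphisms} are met, so $|Z^{1}(F_{(a)})|$ is $(d-1)$-connected. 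Writing $|F|=\overline{\operatorname{st}}_{F}((a))\cup|F\setminus\{(a)\}|$, where the first set is a cone and the intersection is $|\operatorname{link}_{F}((a))|$, a Mayer--Vietoris argument shows that $|F\setminus\{(a)\}|\hookrightarrow|F|$ induces an isomorphism on $\widetilde H_{j}$ for $j<d$, a surjection for $j=d$, and an isomorphism on $\pi_{1}$ when $d\geq 1$.

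\textbf{Retraction and conclusion.} The rule sending a pure sequence to itself and sending an impure sequence $w$ (of length $\geq 2$, since $(a)$ has been removed) to the subsequence $w|_{Y}$ obtained by deleting the entry $a$ defines an order-preserving map $r\colon F\setminus\{(a)\}\to F'$ which restricts to the identity on $F'$ and satisfies $r(w)\leq w$ for all $w$; hence $|F'|\hookrightarrow|F\setminus\{(a)\}|$ is a deformation retract, and $|F\setminus\{(a)\}|$ is $d$-connected. Together with the previous step this gives $\widetilde H_{j}(|F|)=0$ for $j\leq d$, $\pi_{1}(|F|)=0$ for $d\geq 1$, and $|F|$ nonempty and path-connected for $d\geq 0$, so $|F|$ is $d$-connected by the Hurewicz theorem. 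I expect the only real obstacle to be the removal of $(a)$: its link is not highly connected for any soft reason, and it is precisely the identification $\operatorname{link}_{F}((a))\cong Z^{1}(F_{(a)})$ together with Lemma~\ref{lemma: homotopy isomorphisms} (which in turn rests on the join identity $F_{>v}\cong Z^{|v|}(F_{v})$ and on Lemma~\ref{lemma: null homotopy}) that makes it $(d-1)$-connected. The retraction and the reduction are formal by comparison.
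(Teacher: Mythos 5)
Your argument is correct. Note that the paper itself gives no proof of this lemma --- it is stated as a black box with a pointer to \cite{Ch 87} and \cite{Ch 84} --- so the relevant comparison is with the standard argument of Maazen, van der Kallen and Charney, which proceeds by simplicial approximation: one takes a simplicial map of a triangulated sphere into $|F|$ and successively replaces the stars of ``bad'' vertices (those whose image involves elements of $X\setminus Y$), using the connectivity of the slices $F_{v}\cap\mathcal{O}(Y)$ to fill in the modifications. Your route is genuinely different and arguably cleaner: you modify the space rather than the map, excising the single poset element $(a)$, identifying $\operatorname{link}_{F}((a))=F_{>(a)}\cong Z^{1}(F_{(a)})$ and feeding it to Lemma \ref{lemma: homotopy isomorphisms} to get $(d-1)$-connectivity of the link, and then observing that deleting the entry $a$ defines a monotone retraction $r\leq \mathrm{id}$ of $F\setminus\{(a)\}$ onto $F\cap\mathcal{O}(Y)$. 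This is close in spirit to the cone-attachment filtration the paper uses later in the proof of Theorem \ref{theorem: wall complex connectivity}, and all the ingredients you invoke ($(F_{v})_{u}=F_{u\ast v}$, $F_{v\ast(a)}\subset\mathcal{O}(Y)$, the chain condition passing to $F_{v}$ and to $F\cap\mathcal{O}(Y^{+})$) check out. Two small points of hygiene: in the reduction step, when $Y$ is infinite you cannot take $X_{0}$ itself finite --- what you want is $X_{0}\supseteq Y$ with $X_{0}\setminus Y$ finite, which is what the rest of your argument actually uses; and you should dispose of the degenerate case $(a)\notin F$, where the chain condition forces $F=F\cap\mathcal{O}(Y)$ and there is nothing to prove. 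Neither affects the correctness of the argument.
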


\subsection{The Main Theorem} Now we proceed to prove Theorem \ref{thm: pi cohen mac}. The first step here is to construct a poset model for the complex $K^{\pi}_{\bullet}(W_{g,1})$.
%We make the following definition.
\begin{defn} Let $(X, \; Y,\; \lambda,\; q,\; \alpha)$ be a Wall pairing.   
We define 
$$L(X, \; Y,\; \lambda,\; q,\; \alpha) \subset \mathcal{O}(X\times Y)$$
 to be the sub-poset of $\mathcal{O}(X\times Y)$
consisting of sequences
$((x_{1}, y_{1}), \dots, (x_{k}, y_{k}))$
such that for all $i, j \in \{1, \dots, k\}$:
\begin{enumerate}
\item[i.] $\lambda(x_{i}, y_{j}) = \delta_{i,j}\cdot \iota_{n}$,
\item[ii.] $q(y_{i}, y_{j}) = 0$, 
\item[iii.] $\alpha(y_{i}) = 0$.
\end{enumerate}
\end{defn}

When the context is clear we will suppress $\lambda, q,$ and $\alpha$ from the notation and denote 
$$L(X, \; Y) \; := \; L(X, \; Y,\; \lambda,\; q,\; \alpha).$$

By the discussion from the beginning of Section \ref{section: The Intersection Complex} and from the fact that all rank $g$ \text{Wall pairings} are isomorphic, it is clear that if $(X, \; Y,\; \lambda,\; q,\; \alpha)$ is a rank $g$ Wall pairing, then the geometric realization $|L(X, Y)|$ is homeomorphic to the barycentric subdivision of the space $|K^{\pi}_{\bullet}(W_{g,1})|$ (recall, $K^{\pi}_{\bullet}(W_{g,1})$ is the semi-simplicial space determined by the simplicial complex $K^{\pi}(W_{g,1})$). 

The main step in proving Theorem \ref{thm: pi cohen mac} will be to prove the following:
 \begin{theorem} \label{theorem: wall complex connectivity} Let $(X, \; Y,\;  \lambda, \; q,  \; \alpha)$ be a Wall pairing  of rank $g$. Then the geometric realization $|L(X, Y)|$ is $\frac{1}{2}(g-3)$-connected. \end{theorem}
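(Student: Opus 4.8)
The plan is to induct on $g$ and use the poset machinery of Charney recalled in Section~\ref{Poset Basics}, applied to the poset $F := L(X,Y) \subset \mathcal{O}(X\times Y)$. First I would verify that $L(X,Y)$ satisfies the chain condition: condition (i) from Definition~\ref{defn: wall pairing} of the complex (the requirement $\lambda(x_i,y_j)=\delta_{i,j}\iota_n$, $q(y_i,y_j)=0$, $\alpha(y_i)=0$) is manifestly permutation-invariant and closed under passing to sublists, so this is routine. The key is then to understand the link posets $L(X,Y)_v$ for a vertex $v = (x,y)$, i.e.\ the sub-poset of pairs $(x',y')$ that can be adjoined to $v$. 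The geometric heuristic (mirroring the manifold picture) is that adjoining a hyperbolic pair $(x,y)$ with $\lambda(x,y)=\iota_n$, $q(y,y)=0$, $\alpha(y)=0$ should split off a rank-$1$ Wall pairing summand, leaving behind a Wall pairing of rank $g-1$. So I expect to prove a \emph{splitting lemma}: the orthogonal complement of the summand generated by $x$, $y$, and $\rho(x)$ (using the map $\rho$ from Proposition~\ref{prop: uniqueness of rho}) inside $(X,Y,\lambda,q,\alpha)$ is again a Wall pairing, of rank $g-1$, and $L(X,Y)_v$ is isomorphic as a poset to $L$ of that complementary pairing. Granting this, $L(X,Y)_v$ is $\tfrac{1}{2}((g-1)-3)$-connected by the inductive hypothesis, hence $(d-|v|)$-connected with $d = \tfrac{1}{2}(g-3)$ when $|v|=1$; one extends this to longer $v$ by iterating.

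Next I would set up the inductive step proper using Lemmas~\ref{lemma: homotopy isomorphisms} and \ref{lemma: intersection connectivity}, following the outline of Charney's Theorem~3.2 in \cite{Ch 87}. Concretely: pick a rank-$1$ summand of $(X,Y)$, or rather pick a distinguished basis vector $x_g \in X$ (from a Wall basis) with its partners $y_g, z_g=\rho(x_g)$, and let $Y_0 \subset Y$, $X_0 \subset X$ be the complementary summand, a Wall pairing of rank $g-1$. By induction $L(X_0,Y_0)$ is $\tfrac{1}{2}((g-1)-3)$-connected. Using the maps $i_n$, $(s_0)_*$, and the stabilization-type inclusion from Lemma~\ref{lemma: homotopy isomorphisms}, together with the connectivity of the link posets just established, one bootstraps from the connectivity of $L(X_0,Y_0)$ to that of $L(X,Y)$, gaining one in the connectivity range as $g$ increases by one. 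The base cases $g \leq 3$ are vacuous since $\tfrac{1}{2}(g-3) \leq 0$: one need only check path-connectivity or non-emptiness in a couple of small cases, and for $g\geq 1$ the complex is non-empty since a Wall pairing of rank $g$ contains at least one hyperbolic pair $(x_1,y_1)$.

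The main obstacle I anticipate is the splitting lemma — showing that the link $L(X,Y)_v$ of a hyperbolic pair $v=(x,y)$ is genuinely isomorphic to $L$ of a rank-$(g-1)$ Wall pairing. The subtlety is the torsion: $Y \cong \Z^{\oplus g}\oplus(\Z/2)^{\oplus g}$, and one must check that the orthogonal complement (with respect to both $\lambda$ and $q$, and respecting $\alpha$) of the sub-pairing generated by $x$, $y$, and $\rho(x)$ is again of the standard form $\Z^{\oplus(g-1)}\oplus(\Z/2)^{\oplus(g-1)}$ with a compatible Wall basis, rather than some degenerate quotient. Here Proposition~\ref{prop: uniqueness of rho} (existence and uniqueness of $\rho$) and the perfect-pairing properties of $\lambda$ and $q$ established in the two propositions preceding it are essential: they guarantee that $x$ determines $\rho(x) \in \mathrm{Torsion}(Y)$ canonically, so that the decomposition $X = \langle x\rangle \oplus X_0$, $Y = \langle y, \rho(x)\rangle \oplus Y_0$ is forced and each summand is again perfect, i.e.\ the complement is a Wall pairing. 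A secondary technical point, also needed for Lemma~\ref{lemma: intersection connectivity}, is handling the condition $\alpha(y_i)=0$: since $\alpha$ is not a homomorphism but satisfies the summation formula \eqref{eq: alpha summation formula}, one must check that adding an orthogonal $y'$ with $\alpha(y')=0$ to $y$ keeps $\alpha$ vanishing, which follows from \eqref{eq: alpha summation formula} precisely because $q(y,y')=0$ for orthogonal pairs. Once these algebraic facts are in place, the connectivity argument is a formal application of the three poset lemmas exactly as in \cite{Ch 87}.
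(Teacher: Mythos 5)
Your preparatory analysis is largely on target: the chain condition, the role of the summation formula for $\alpha$, and above all the splitting lemma for links are exactly the content of the paper's Lemma \ref{lemma: compliment module}, including the subtle point that the complement of the sub-pairing spanned by $x$, $y$, $\rho(x)$ must be shown to be a genuine rank-$(g-1)$ Wall pairing (the paper does this by explicitly correcting a basis of $\mathrm{proj}_{Y_{0}}(V^{\perp})$ by elements $\rho(v_{j})$ so that it lands in $V^{\perp}\cap\widehat{W}$). This yields, as you say, that $L(X,Y)_{(v,w)}$ is $(d-\tfrac{1}{2}|v|)$-connected by induction on the rank, with $d=\tfrac{1}{2}(g-3)$.

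The gap is in what you call the ``inductive step proper.'' You propose to bootstrap the connectivity of $L(X,Y)$ from that of $L(X_{0},Y_{0})$ for a complementary rank-$(g-1)$ summand, via the maps of Lemma \ref{lemma: homotopy isomorphisms}. This cannot work as stated: those maps only compare $F$ with $Z^{n}F$ and $F\langle S\rangle$ and do not increase connectivity, and knowing that a sub-poset and all links are $c$-connected does not make the ambient poset $(c+\tfrac{1}{2})$-connected --- some external source of high connectivity is required. (Your phrase ``gaining one in the connectivity range as $g$ increases by one'' is also off: the target for $L$ grows only by $\tfrac{1}{2}$ per unit of $g$; the rate of one per unit of $g$ belongs to a different poset.) The actual engine of the proof, which your proposal never identifies, is Van der Kallen's theorem (Theorem \ref{theorem: unimodular connectivity}) that the poset $U(X)$ of unimodular sequences in $X\cong\Z^{\oplus g}$ is $(g-3)$-connected. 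One introduces the auxiliary poset $M(X,Y)$ containing both $U(X)$ (as the sequences with all $y_{i}=0$) and $L(X,Y)$; Lemma \ref{lemma: intersection connectivity}, applied to $M(X,Y)\cap\mathcal{O}(X\times\{0\})=U(X)$, shows $|M(X,Y)|$ is $(g-3)$-connected. One then filters $M(X,Y)$ by sub-posets $F_{0}\subset F_{1}\subset\cdots$ according to how many $y_{i}$ vanish, shows $|F_{0}|$ deformation retracts onto $|L(X,Y)|$, and pushes the connectivity of $|M(X,Y)|$ down the filtration: each $|F_{i}|$ is obtained from $|F_{i-1}|$ by coning off links whose connectivity is controlled by the splitting lemma, and at the bottom stage one obtains only a surjection $\bigoplus\pi_{d}(\mathrm{link}_{F_{1}}(\hat{v}))\rightarrow\pi_{d}(F_{0})$, which is then shown to be zero using Lemma \ref{lemma: null homotopy} (the inclusion $L(X,Y)_{(v,w)}\hookrightarrow L(X,Y)$ is nullhomotopic). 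Without Van der Kallen's input and the intermediate poset $M(X,Y)$, the induction you describe has no base of connectivity to propagate, so deferring to ``a formal application of the three poset lemmas'' leaves the essential step unproved.
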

 
 It follows from this theorem that the space $|K^{\pi}(W_{g,1})|$ is $\frac{1}{2}(g-3)$-connected. We will need some auxiliary constructions. 

\begin{defn} For any free $\Z$-module $V$, Let $U(V) \subset \mathcal{O}(V)$ denote the poset consisting of sequences $(x_{1}, \dots, x_{k})$ of elements of $V$ which are unimodular, i.e. the sub $\Z$-module spanned by $(x_{1}, \dots, x_{k})$ splits as a direct summand of $V$. \end{defn}

We will need to use the following important theorem from \cite[Section 2]{Ka 80}. 
\begin{theorem} {\rm (W. Van der Kallen,  \cite{Ka 80})}   
\label{theorem: unimodular connectivity} 
Let $V$ be a free $\Z$-module of rank $g$. Then the geometric
realization $|U(V)|$ is $(g - 3)$-connected. Furthermore, if $v \in
U(V)$ then the geometric realization $|U(V)_{v}|$ is $(g - 3 -
|v|)$-connected. 
\end{theorem}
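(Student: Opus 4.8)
The plan is to prove the two assertions together by induction on the rank $g$, using the poset machinery of Section~\ref{Poset Basics} (Lemmas~\ref{lemma: null homotopy}, \ref{lemma: homotopy isomorphisms}, \ref{lemma: intersection connectivity}, and the link formula~(\ref{eq: link homeomorphism})) together with a single arithmetic input: the Bass stable range of $\Z$ is at most $2$, that is, for every unimodular triple $(a_{1},a_{2},a_{3})$ of integers there exist $s,t\in\Z$ with $(a_{1}+sa_{3},\,a_{2}+ta_{3})$ unimodular. It is precisely this bound that produces the connectivity estimate $g-3 = g-\mathrm{sr}(\Z)-1$; over a general ring $R$ with stable range $\le d$ the same argument yields $(g-d-1)$-connectivity. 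For $g\le 2$ the assertions are either vacuous or amount to non-emptiness of the relevant posets, which is clear, so one may assume $g\ge 3$ and that both assertions hold for every free $\Z$-module of rank $<g$.

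The geometric input that feeds the induction is a description of the posets $U(V)_{v}$. If $v=(v_{1},\dots,v_{k})\in U(V)$ then $\langle v\rangle$ is a rank-$k$ direct summand of $V$, and a sequence $(w_{1},\dots,w_{j})$ lies in $U(V)_{v}$ exactly when its image in $V/\langle v\rangle$ is a unimodular sequence there; this gives a surjective poset map $U(V)_{v}\twoheadrightarrow U(V/\langle v\rangle)$ onto the unimodular-sequence poset of a free module of rank $g-k$. I would then set up the inductive step as in Van der Kallen: fix a unimodular $e\in V$ with $V=\langle e\rangle\oplus V'$ and compare $U(V)$ with posets built from the rank-$(g-1)$ poset $U(V')$ by the constructions $Z^{\bullet}(-)$ and $(-)\langle S\rangle$ of Section~\ref{Poset Basics}. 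Lemma~\ref{lemma: homotopy isomorphisms} then converts the inductive connectivity of $U(V')$ (and of its own links, also known by induction via the surjection above) into a connectivity statement for $U(V)$, while Lemma~\ref{lemma: intersection connectivity}, applied with a suitably chosen subset $Y\subset V$, reduces $(g-3)$-connectivity of $U(V)$ to that of the ``partial'' posets $U(V)\cap\mathcal{O}(Y)$. The bound for the links $|U(V)_{v}|$ comes out of the same induction.

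The genuinely hard step --- and the combinatorial heart of \cite{Ka 80} --- is the surgery that makes this go through. Given a simplicial map $f\colon S^{k}\to|U(V)|$ with $k\le g-3$, one must, working one simplex of the triangulated domain at a time, replace each vector $x$ occurring in $f$ by a vector $x+(\text{a }\Z\text{-combination of the other entries of its simplex})+(\text{a multiple of }e)$ that keeps every sequence in the image unimodular, and thereby homotope $f$ into a subcomplex on which it is manifestly null-homotopic; each such move must be shown to strictly decrease a suitable numerical measure of the ``badness'' of $f$, so that the process terminates. The reason such a move is always available is the inequality $k+1\le g-2 = g-\mathrm{sr}(\Z)$, which leaves exactly enough room to invoke the stable range condition on the relevant unimodular rows --- this is where the hypothesis on $g$ enters essentially. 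The bookkeeping needed to organize these moves is intricate, and I would carry it out following \cite[\S 2]{Ka 80}.
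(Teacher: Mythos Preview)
The paper does not give its own proof of this theorem: it is quoted verbatim from \cite[\S 2]{Ka 80} and used as a black-box input to the proof of Theorem~\ref{theorem: wall complex connectivity}. There is therefore nothing in the paper to compare your proposal against.

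That said, your outline is a fair summary of how Van der Kallen's argument actually goes: the connectivity bound $g-3$ is indeed $g-\mathrm{sr}(\Z)-1$, the description of $U(V)_{v}$ via the quotient $V/\langle v\rangle$ is correct, and the ``surgery'' paragraph captures the flavor of the elementary-transvection modifications in \cite[\S 2]{Ka 80}. Two small caveats. First, the general poset lemmas you invoke (Lemmas~\ref{lemma: null homotopy}--\ref{lemma: intersection connectivity}) are taken in this paper from Charney \cite{Ch 87, Ch 84}, who in turn is abstracting Van der Kallen's method; using them to reprove Van der Kallen is not circular, but it does mean you are essentially reassembling his original argument rather than giving an independent one. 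Second, your sketch defers all of the real content to ``I would carry it out following \cite[\S 2]{Ka 80}'': the termination argument for the modification procedure (Van der Kallen's careful ordering of ``bad'' simplices and the verification that each move strictly improves the situation without damaging earlier work) is where the proof lives, and nothing in your write-up indicates how that goes beyond pointing to the reference. As a proof \emph{strategy} this is fine; as a proof it is incomplete in exactly the place where the theorem is hard.
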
 

This result will play a crucial role in
our proof of Theorem \ref{theorem: wall complex connectivity}. We now
define new poset which will allow us to compare the poset $L(X, Y)$ to
the poset $U(X)$ for any Wall pairing $(X, Y, \lambda, q, \alpha)$.

\begin{defn} For a Wall-pairing $(X, Y, \lambda, q, \alpha)$,  we define 
$$M(X, Y) := M(X, Y, \lambda, q, \alpha)$$ to 
be the sub-poset of $\mathcal{O}(X\times Y)$ consisting of sequences
  $((x_{1}, y_{1}), \dots, (x_{k}, y_{k}))$
  subject to the following conditions:
  \begin{enumerate}
  \item[i.] the list $(x_{1}, \dots, x_{k})$ is an element of $U(X)$, 
  \item[ii.] $\alpha(y_{i}) = 0 $ for all $i \in \{1, \dots, k\}$,
  \item[iii.] for each $i$, either $y_{i} = 0$ or $\lambda(x_{j}, y_{i}) = \delta_{i, j}\cdot\iota_{n}$ for all $j \in \{1, \dots, k\}$,
  \item[iv.] $q(y_{i}, y_{j}) = 0$ for all $i, j \in \{1, \dots, k\}$.
  \end{enumerate}
  \end{defn}
  
   We will need to use one more important result about Wall pairings. For a Wall pairing $(X,\; Y, \; \lambda, \; q, \; \alpha)$ of rank $g$, let 
  $V \subset X$ and $W \subset Y$ be free sub-$\Z$-modules of rank $k > 0$ that each split as direct summands of $X$ and $Y$,
  such that the restriction 
  $$\lambda\vert_{V\times W}: V\times W \longrightarrow \pi_{n}(S^{n})$$
  is a perfect pairing. Denote, 
  \begin{equation} \label{eq: compliment notation}
  \begin{aligned}
  V^{\perp} \; &= \; \{y \in Y \; | \; \lambda(v, y) = 0 \; \text{for all $v \in V$}\}, \\
  W^{\perp} \; &= \; \{x \in X \; | \; \lambda(x, w) = 0 \; \text{for all $w \in W$}\}, \\
  \widehat{W} \; &= \; \{y \in W \; | \; q(y, w) = 0 \; \text{for all $w \in W$}\}.
  \end{aligned}
  \end{equation}
  Denote by $\lambda'$, $q'$, and $\alpha'$ the restrictions of the maps $\lambda, q,$ and $\alpha$ to $W^{\perp}$ and $V^{\perp}\cap \widehat{W}$. 
  \begin{lemma} \label{lemma: compliment module} Let $(X,\; Y, \; \lambda, \; q, \; \alpha)$, $V \subset X$, and $W \subset Y$ be as above. The tuple
  $$(W^{\perp}, \; V^{\perp}\cap \widehat{W}, \; \lambda', \; q', \; \alpha')$$
  is a \text{Wall pairing} of rank $g - k$.
  \end{lemma}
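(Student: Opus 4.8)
The plan is to exhibit explicit bases for $W^{\perp}$ and $V^{\perp}\cap\widehat{W}$ that satisfy conditions i.--iii. of Definition \ref{defn: wall pairing}, thereby certifying that $(W^{\perp},\,V^{\perp}\cap\widehat{W},\,\lambda',\,q',\,\alpha')$ is a Wall pairing of rank $g-k$. By Proposition \ref{prop: rank 1 wall} we may work in the standard model: fix bases $(x_{1},\dots,x_{g})$ of $X$ and $(y_{1},\dots,y_{g},z_{1},\dots,z_{g})$ of $Y$ realizing the defining relations, and let $\rho\colon X\to\Torsion(Y)$ be the map of Proposition \ref{prop: uniqueness of rho}, so $\rho(x_{i})=z_{i}$. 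Since $\lambda|_{V\times W}$ is perfect of rank $k$, after changing the standard basis by an automorphism of the Wall pairing I would first reduce to the case where $V=\langle x_{1},\dots,x_{k}\rangle$ and $W=\langle y_{1},\dots,y_{k}\rangle$; the fact that any unimodular list spanning a summand on which $\alpha$ vanishes extends uniquely to a standard basis (the observation preceding Proposition \ref{prop: uniqueness of rho}) is what makes this normalization legitimate, and it is the step I expect to require the most care. Indeed one must check that the hypothesis ``$\lambda|_{V\times W}$ perfect'' forces $W$ to be contained in the free part of $Y$ (dimension count via Proposition on $\lambda$ being a perfect pairing), that $\alpha$ vanishes on $W$, and that $V^{\perp}\cap\widehat W$ already lands in $\Torsion(Y)\oplus(\text{free part})$ compatibly.

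Once in standard position, the computations are routine. One checks directly that
$$W^{\perp}=\langle x_{k+1},\dots,x_{g}\rangle,\qquad V^{\perp}=\langle y_{k+1},\dots,y_{g},z_{1},\dots,z_{g}\rangle,$$
using conditions i.--ii. of Definition \ref{defn: wall pairing}. Next, $\widehat{W}=\{y\in W\mid q(y,w)=0\ \forall w\in W\}$; since $q(y_{i},y_{j})=0$ for all $i,j\le k$ and $W$ is spanned by the $y_{i}$, we get $\widehat{W}=W=\langle y_{1},\dots,y_{k}\rangle$. Wait---this needs the complementary torsion generators excluded, which they are by definition of $\widehat W\subset W$. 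Therefore
$$V^{\perp}\cap\widehat{W}=\langle y_{k+1},\dots,y_{g},z_{1},\dots,z_{g}\rangle\cap\langle y_{1},\dots,y_{k}\rangle.$$
Hmm, this intersection is zero as written, so I have mis-set the normalization: the correct arrangement must instead place $W=\langle y_{k+1},\dots,y_{g}\rangle$ inside the ``outer'' block, or equivalently one should track that $\widehat W$ and $V^\perp$ are taken inside $Y$ and the relevant complement is $\langle y_{1},\dots,y_{k},z_{k+1},\dots,z_{g}\rangle\cap(\text{stuff})$. I would resolve this bookkeeping by instead directly verifying: set $x'_{i}:=x_{k+i}$, $y'_{i}:=y_{k+i}$, $z'_{i}:=z_{k+i}$ for $i=1,\dots,g-k$, confirm each lies in the asserted submodule, confirm they form bases (again via the perfectness statements for $\lambda$ and $q$ applied to the restrictions $\lambda'$, $q'$), and confirm
$$\lambda'(x'_{i},y'_{j})=\delta_{i,j}\iota_{n},\quad q'(y'_{i},z'_{j})=\delta_{i,j}\rho_{n+1},\quad q'(y'_{i},y'_{j})=q'(z'_{i},z'_{j})=0,\quad \alpha'(y'_{i})=\alpha'(z'_{i})=0,$$
each of which is inherited verbatim from the corresponding relation in $(X,Y,\lambda,q,\alpha)$.

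The only genuinely nontrivial points, which I would spell out carefully, are: (1) that $W^{\perp}$ is free of rank exactly $g-k$ and splits off $X$, and symmetrically for the free part of $V^{\perp}\cap\widehat W$---this follows from $\lambda|_{V\times W}$ being perfect together with the global perfectness of $\lambda$ on $X\times Y$ (Proposition before Proposition \ref{prop: uniqueness of rho}), giving a direct sum decomposition $X=V'\oplus W^{\perp}$ with $V'\cong\Hom(W,\pi_n(S^n))$ of rank $k$; (2) that $\Torsion(Y)$ decomposes compatibly, namely $\Torsion(V^{\perp}\cap\widehat W)=\langle z'_{1},\dots,z'_{g-k}\rangle$, which uses the uniqueness in Proposition \ref{prop: uniqueness of rho} to match the $\rho$-map of the sub-pairing with the restriction of $\rho$; and (3) that the $\alpha$-summation formula \eqref{eq: alpha summation formula} holds for $\alpha'$, which is automatic since it holds for $\alpha$ and all modules and maps are restrictions. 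Assembling these, $(W^{\perp},\,V^{\perp}\cap\widehat W,\,\lambda',\,q',\,\alpha')$ satisfies Definition \ref{defn: wall pairing} with the exhibited bases, hence is a Wall pairing of rank $g-k$. The main obstacle, as noted, is item (1)/(2)---pinning down the direct-sum decompositions and the identification of torsion submodules cleanly enough that the basis count ``$g-k$'' is manifest rather than asserted.
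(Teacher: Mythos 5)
Your proposal has a genuine gap at the normalization step, and it is exactly the point the paper's proof is designed to get around. The lemma's hypotheses on $W$ are only that it is a free rank-$k$ summand of $Y$ with $\lambda|_{V\times W}$ perfect; nothing forces $\alpha|_{W}=0$, $q|_{W\times W}=0$, or $W$ to lie in a free complement of $\Torsion(Y)$. For instance, in the standard rank-$2$ model one may take $V=\langle x_{1}\rangle$ and $W=\langle y_{1}+z_{1}\rangle$: this is a free direct summand and $\lambda(x_{1},y_{1}+z_{1})=\iota_{n}$, but $\alpha(y_{1}+z_{1})=\partial(q(y_{1},z_{1}))=\partial\rho_{n+1}$, which is in general nonzero, so no automorphism of the Wall pairing can carry $W$ to $\langle y_{1}\rangle$. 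The observation preceding Proposition \ref{prop: uniqueness of rho} that you invoke applies only to unimodular lists of full length $g$ on which $\alpha$ vanishes, so it cannot legitimize the reduction; the item you flag as needing verification (``that $\alpha$ vanishes on $W$'') is simply false under the stated hypotheses, so the standard-position strategy collapses rather than merely requiring care. Relatedly, $\widehat{W}$ must be read as $\{y\in Y\mid q(y,w)=0\ \text{for all}\ w\in W\}$ (the ``$y\in W$'' in the displayed definition is a typo); reading it as a subset of $W$ is what produces the zero intersection you noticed mid-argument, and the fix is not a different bookkeeping of indices.

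The paper's argument avoids any normalization of $W$: it fixes a free complement $Y_{0}$ of $\Torsion(Y)$ with $\alpha(Y_{0})=0$ and $q(Y_{0}\times Y_{0})=0$, chooses a $\lambda$-dual basis $(x_{i})$, $(y_{i})$ for $W^{\perp}\times\proj_{Y_{0}}(V^{\perp})$, and then confronts the fact that the $y_{i}$ need not lie in $\widehat{W}$. The crux is the correction $\hat{y}_{i}:=y_{i}+\sum_{j\in A_{i}}\rho(v_{j})$, which uses the map $\rho$ of Proposition \ref{prop: uniqueness of rho} and the fact that $\rho_{n+1}$ has order $2$ to kill the offending $q$-pairings with $W$ while preserving the relations $\lambda(x_{i},\hat{y}_{j})=\delta_{i,j}\iota_{n}$, $q(\hat{y}_{i},\hat{y}_{j})=0$, and $\alpha(\hat{y}_{i})=0$; one then checks $V^{\perp}\cap\widehat{W}=\langle\hat{y}_{1},\dots,\hat{y}_{g-k}\rangle\oplus\langle\rho(x_{1}),\dots,\rho(x_{g-k})\rangle$. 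This correction step is the mathematical content of the lemma and is entirely absent from your proposal: without it the elements $y'_{i}$ you exhibit need not lie in $\widehat{W}$, so the bases you propose do not certify the sub-pairing. Your points about rank counts, the compatibility of the torsion decomposition with $\rho$, and the inheritance of the summation formula are correct but are the easy part.
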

 \begin{proof} In order to prove the lemma it will suffice to find a 
$\Z$-basis $(x_{1}, \dots, x_{g-k})$ of $W^{\perp}$ and a unimodular
   sequence $(y_{1}, \dots, y_{g-k})$ of elements of
   $V^{\perp}\cap\widehat{W}$ such that for all $i, j \in \{1, \dots,
   g- k\}$ the following conditions are met:
 \begin{enumerate} 
 \item[(a)] $\lambda(x_{i}, y_{j}) \; = \; \delta_{i,
   j}\cdot\iota_{n}$,
 \item[(b)] $q(y_{i}, y_{j}) = 0$,
 \item[(c)] $\alpha(y_{i}) = 0$, 
 \item[(d)] $\lambda(v, y_{i}) = 0$ \and $q(y_{i}, w) = 0$ for all $v \in V$, $w \in W$. 
 \end{enumerate}
 We will then show that,
  $$\langle y_{1}, \dots, y_{g-k}\rangle\oplus \langle \rho(x_{1}), \dots, \rho(x_{g-k}) \rangle \; = \; (V^{\perp}\cap \widehat{W}),$$ 
  where recall from Proposition \ref{prop: uniqueness of rho} that 
  $\rho: X \longrightarrow \text{Torsion}(Y)$
 is the unique homomorphism such that 
 $q(\rho(x), y) \; = \; \lambda(x, y)\circ\rho_{n+1}$ for all $(x, y) \in X\times Y$. 
 This implies
  $$(x_{1}, \dots, x_{g-k}) \quad \text{and} \quad (y_{1}, \dots,
 y_{g-k}, \rho(x_{1}), \dots, \rho(x_{g-k}))$$ are bases of
 $W^{\perp}$ and $V^{\perp}\cap \widehat{W}$ which satisfy conditions
 i., ii., and iii. of Definition \ref{defn: wall pairing}, thus
 implying that the tuple $(W^{\perp}, \; V^{\perp}\cap \widehat{W}, \;
 \lambda', \; q', \; \alpha')$ is indeed a Wall pairing.

 Since $(X,Y, \lambda, q, \alpha)$ is a Wall pairing, %by Definition
% \ref{defn: wall pairing}, 
it follows that there exists a free
 sub-$\Z$-module $Y_{0} \subset Y$ with  $Y \; = \;
 Y_{0}\oplus\text{Torsion}(Y)$, such that
$$\alpha(Y_{0}) = 0 \quad  \text{and} \quad  q(Y_{0}\times Y_{0}) = 0.$$

Since $\lambda': V\times W \longrightarrow \pi_{n}(S^{n})$, the
restriction of $\lambda$ as above, is by hypothesis a perfect pairing,
we can choose $\Z$-bases $(v_{1}, \dots, v_{k})$ and $(w_{1}, \dots,
w_{k})$ of $V$ and $W$ respectively such that $\lambda'(v_{i}, w_{j}) =
\delta_{i, j}\cdot\iota_{n}$ for all $i, j \in \{1, \dots, k\}$.
 
Denote by $\text{proj}_{Y_{0}}: Y \rightarrow Y_{0}$ the projection
induced by the splitting $Y \; = \; Y_{0}\oplus\text{Torsion}(Y)$.
Now, the restriction of $\lambda$ to $V\times\text{proj}_{Y_{0}}(W)$
is a perfect pairing. Since $\text{proj}_{Y_{0}}(W)^{\perp} =
W^{\perp}$, the restriction of $\lambda$ to $W^{\perp}\times
\text{proj}_{Y_{0}}(V^{\perp})$ is a perfect pairing as well. So, we
can choose $\Z$-bases
 $$(x_{0}, \dots, x_{g-k}) \quad \text{and} \quad (y_{1}, \dots, y_{g-k}),$$
 of $W^{\perp}$ and $\text{proj}_{Y_{0}}(V^{\perp})$ respectively, such that 
 $$\lambda(x_{i}, y_{j}) = \delta_{i,j}\cdot\iota_{n} \quad  \text{for all $i, j \in \{1, \dots, g-k\}$}.$$ 
 Since $\alpha(Y_{0}) = 0$ and $q(Y_{0}\times Y_{0}) = 0$, it follows that \; $q(y_{i}, y_{j}) = 0$ and $\alpha(y_{i}) = 0$ for all $i, j \in \{1, \dots, g-k\}$. By construction $\lambda(v, y_{i}) = 0$ for all $v \in V$ and $i \in \{1, \dots, g-k\}$. 
 However, it may happen that $q(y_{i}, w) = \rho_{n+1} \in \pi_{n+1}(S^{n})$ for some $i$ and $w \in W$ and so the elements $y_{i}$ may not all be in $V^{\perp}\cap\widehat{W}$, and thus condition (d) is not yet satisfied. So, in order to find a unimodular sequence that satisfies conditions (a), (b), (c), and (d), we must alter the elements $y_{i}$. For $i = 1, \dots, g-k$, set 
 $$A_{i} \; = \; \{j \in \{1, \dots, k\} \; | \; q(y_{i}, w_{j}) \neq 0 \}.$$
 We then set,
 $$\hat{y}_{i} \; := \; y_{i} + \sum_{j \in A_{i}}\rho(v_{j}).$$
 Observe that since $y_{i} \in \text{proj}_{Y_{0}}(V^{\perp}) \subset V^{\perp}$, 
 $$q(y_{i}, \rho(v_{j})) \; = \; \lambda(v_{i}, y_{j})\circ\rho_{n+1}
 \; = \; 0 \quad \text{for all $j \in \{1, \dots, k\}$}.$$ Then, since
 $\alpha(y_{i}) = 0$ and $\alpha(\rho(v_{j})) = 0$, the summation
 formula (\ref{eq: alpha summation formula}) for $\alpha$ implies that
 $\alpha(\hat{y}_{i}) = 0$ for all $i \in \{1, \dots, g-k\}$. We then
 compute:
 $$\begin{aligned}
q(\hat{y}_{i}, w_{t}) \; =& \; q(y_{i}, w_{t}) + \sum_{j \in A_{i}}q(\rho(v_{j}), w_{t}) = \\
= \;  q(y_{i}, w_{t}) + \sum_{j \in A_{i}}\lambda(v_{j}, w_{t})\circ\rho_{n+1} \; =&
\; \begin{cases} 
q(y_{i}, w_{t}) + \lambda(v_{t}, w_{t})\circ\rho_{n+1} & \quad \text{if $t \in A_{i}$}\\
0 & \quad \text{if $t \notin A_{i}$}
\end{cases}
\end{aligned}
$$ Here the last equality holds since $\lambda(v_{j}, w_{t}) =
 \delta_{j, t}\cdot\iota_{n}$.  In both cases, $t \in A_{i}$ or $t
 \notin A_{i}$, we have $q(\hat{y}_{i}, w_{t}) = 0$ since if $t \in
 A_{i}$, then $q(y_{i}, w_{t}) = \rho_{n+1}$ and $q(\rho(v_{t}),
 w_{t}) = \lambda(v_{t}, w_{t}) = \rho_{n+1}$ by definition, and
 $\rho_{n+1}$ is an element of order $2$.  It is now clear that the
 sequences $(x_{1}, \dots, x_{g-k})$ and $(\hat{y}_{1}, \dots,
 \hat{y}_{g-k})$ satisfy conditions (a), (b), (c), and (d).

The Lemma will be proven once we show that 
$$V^{\perp}\cap\widehat{W} = \langle \hat{y}_{1}, \dots, \hat{y}_{g-k}\rangle\oplus \langle \rho(x_{1}), \dots, \rho(x_{g-k})\rangle.$$
Clearly, 
$$\langle \hat{y}_{1}, \dots, \hat{y}_{g-k}\rangle\oplus \langle \rho(x_{1}), \dots, \rho(x_{g-k})\rangle \;  \subset \; (V^{\perp}\cap\widehat{W}).$$
Let $z \in \; (V^{\perp}\cap\widehat{W})$. 
It is easy to check that
$$Y \; = \; \langle\hat{y}_{1}, \dots, \hat{y}_{g-k}\rangle \oplus \langle\rho(x_{1}), \dots, \rho(x_{g-k}) \rangle \oplus \rho(V) \oplus W,$$
and so $z$ can be written uniquely as $z = a + b + c + d$ with $a \in \langle\rho(x_{1}), \dots, \rho(x_{r}) \rangle$, \; $b \in \langle\hat{y}_{1}, \dots, \hat{y}_{r}\rangle$, \; $c \in \rho(V)$, and $d \in W$. However, since 
$$\langle \rho(x_{1}), \dots, \rho(x_{g-k}) \rangle \oplus \langle\hat{y}_{1}, \dots, \hat{y}_{g-k}\rangle \; \subset \; V^{\perp}\cap \widehat{W},$$
it follows that $c$ and $d$ must both be zero and thus, $z \in \langle \rho(x_{1}), \dots, \rho(x_{g-k}) \rangle \oplus \langle\hat{y}_{1}, \dots, \hat{y}_{g-k}\rangle$. This proves that,
$$V^{\perp}\cap \widehat{W} \; \subset \; \langle \rho(x_{1}), \dots, \rho(x_{g-k}) \rangle \oplus \langle\hat{y}_{1}, \dots, \hat{y}_{g-k}\rangle.$$
It follows that 
$(W^{\perp}, \; V^{\perp}\cap \widehat{W}, \; \lambda', \; q', \; \alpha|_{W^{\perp}})$
is a Wall pairing of rank $g-k$.
\end{proof}

Notice that the posets $L(X, Y)$, $M(X, Y)$ and $U(X)$ all satisfy the
chain condition (see Section \ref{Poset Basics}). Lemma \ref{lemma:
  compliment module} will be useful to us in the following way. Let
$$(v, w): = ((v_{1}, w_{1}), \dots, (v_{k}, w_{k}))$$ 
be an element of $K(X, Y)$. By definition of $K(X,Y)$ we have that 
$\lambda(v_{i}, w_{j}) = \delta_{i, j}\cdot\iota_{n}$ for all $i, j \in \{1, \dots k\}$. Setting 
$$V := \langle v_{1}, \dots, v_{k} \rangle \quad  \text{and}  \quad W = \langle w_{1}, \dots, w_{k} \rangle,$$ 
and using the notation from (\ref{eq: compliment notation}) we have
\begin{equation} \label{eq: identification of link} L(X, Y)_{(v, w)} \; = \; L(W^{\perp}, V^{\perp}\cap \widehat{W}). \end{equation}

 Notice also that the posets $U(X)$ and $L(X, Y)$ both embed in $M(X, Y)$ as sub-posets. We have,
  \begin{equation} \label{eq: sub poset chain}
  \begin{aligned}
  M(X, Y) \cap \mathcal{O}(X \times \{0\}) \; &= \; U(X) \quad \text{and}\\
  M(X, Y) \cap \mathcal{O}(X \times (Y\setminus \{0\})) \; &= \; L(X, Y).
  \end{aligned}
  \end{equation}
These above identifications (\ref{eq: sub poset chain}) and (\ref{eq:
  identification of link}) will form the basis of the proof of Theorem
\ref{theorem: wall complex connectivity} and of the following lemma.
\begin{lemma} Let $(X, Y, \lambda, q, \alpha)$ be a Wall pairing of rank $g$. Then $|M(X, Y)|$ is $(g - 3)$-connected. \end{lemma}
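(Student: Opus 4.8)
The plan is to deduce the connectivity of $|M(X,Y)|$ from Van der Kallen's Theorem \ref{theorem: unimodular connectivity} via the intersection–connectivity Lemma \ref{lemma: intersection connectivity}. Concretely, I would apply that lemma to the poset $F:=M(X,Y)\subset\mathcal{O}(X\times Y)$ with subset $X\times\{0\}\subset X\times Y$ and with $d:=g-3$. The first hypothesis is immediate: by (\ref{eq: sub poset chain}) we have $F\cap\mathcal{O}(X\times\{0\})=U(X)$, which is $(g-3)$-connected since $X$ is free of rank $g$. So the whole argument reduces to verifying the link hypothesis: for every $(v,w)=((v_{1},w_{1}),\dots,(v_{k},w_{k}))\in M(X,Y)$, the poset $F_{(v,w)}\cap\mathcal{O}(X\times\{0\})$ is $(g-3-k)$-connected. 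Granting this, Lemma \ref{lemma: intersection connectivity} gives that $|M(X,Y)|$ is $(g-3)$-connected, as claimed.

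To handle the link I would first set up the relevant module decomposition. Split the index set $\{1,\dots,k\}=A'\sqcup A''$ according to whether $w_{l}\neq 0$ or $w_{l}=0$, put $W:=\langle w_{l}:l\in A'\rangle\subseteq Y$, $V':=\langle v_{l}:l\in A'\rangle\subseteq X$, and let $v''$ denote the subsequence $(v_{l}:l\in A'')$. Condition iii in the definition of $M(X,Y)$ forces the matrix $\big(\lambda(v_{a},w_{b})\big)_{a,b\in A'}$ to be the identity matrix times $\iota_{n}$; hence $W$ is free of rank $|A'|$, the pairing identifies $V'$ with $\Hom(W,\pi_{n}(S^{n}))$, and $X=W^{\perp}\oplus V'$ where $W^{\perp}=\{x\in X:\lambda(x,w)=0\ \forall w\in W\}$ has rank $g-|A'|$. (This is an elementary splitting argument and does not require Lemma \ref{lemma: compliment module}.) Moreover $\lambda(v_{l},w_{b})=\delta_{l,b}\iota_{n}=0$ for $l\in A''$, $b\in A'$, so $v''\subset W^{\perp}$, and since $(v_{1},\dots,v_{k})\in U(X)$ the subsequence $v''$ is unimodular in $W^{\perp}$.

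Next I would identify $F_{(v,w)}\cap\mathcal{O}(X\times\{0\})\cong U(W^{\perp})_{v''}$, via $((u_{1},0),\dots,(u_{j},0))\mapsto(u_{1},\dots,u_{j})$. Unwinding the definition of $M(X,Y)$ (which is invariant under reordering of its entries), membership of $((u_{1},0),\dots,(u_{j},0),(v_{1},w_{1}),\dots,(v_{k},w_{k}))$ in $M(X,Y)$ amounts to: $(u_{1},\dots,u_{j},v_{1},\dots,v_{k})\in U(X)$; and, applying condition iii to the nonzero entries $w_{b}$, $b\in A'$, also $\lambda(u_{m},w_{b})=0$ for all $m$ and all $b\in A'$, i.e. $u_{m}\in W^{\perp}$. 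Using the splitting $X=W^{\perp}\oplus V'$ with $(v_{b}:b\in A')$ a basis of $V'$ and $u_{m},v''\in W^{\perp}$, unimodularity of $(u,v)$ in $X$ is equivalent to unimodularity of $(u,v'')$ in $W^{\perp}$, which is exactly the defining condition of $U(W^{\perp})_{v''}$. By Theorem \ref{theorem: unimodular connectivity}, $|U(W^{\perp})_{v''}|$ is $\big(\mathrm{rank}(W^{\perp})-3-|v''|\big)$-connected $=(g-|A'|-3-|A''|)$-connected $=(g-3-k)$-connected, which is the required bound.

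The main obstacle is precisely the second step: seeing that the link of $(v,w)$ in $M(X,Y)$ (intersected with the $U(X)$-part) is again a Van der Kallen poset, and carrying out the module bookkeeping carefully in the presence of vanishing $w_{l}$'s so that $\mathrm{rank}(W^{\perp})-|v''|=g-|A'|-|A''|=g-k$ comes out exactly right. Once that identification and count are in place, everything else is a routine invocation of Lemmas \ref{lemma: intersection connectivity} and Theorem \ref{theorem: unimodular connectivity}.
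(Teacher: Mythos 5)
Your proof is correct and follows essentially the same route as the paper: both arguments apply Lemma \ref{lemma: intersection connectivity} with the subset $X\times\{0\}$, identify $M(X,Y)_{(v,w)}\cap\mathcal{O}(X\times\{0\})$ with a poset of the form $U(W^{\perp})_{v''}$, and conclude via Van der Kallen's Theorem \ref{theorem: unimodular connectivity}. The only (harmless) difference is that you establish the splitting $X=W^{\perp}\oplus V'$ by a direct duality argument rather than by citing Lemma \ref{lemma: compliment module} as the paper does.
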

  \begin{proof} Let $v$ be an element of $M(X, Y) := M(X, Y, \lambda, q, \alpha)$. Re-ordering if necessary, we can write,
  $$v \; = \; ((v_{1}, 0), \dots, (v_{i}, 0), (v_{i+1}, w_{i+1}), \dots, (v_{k}, w_{k}))$$
  where $w_{j} \neq 0$ for $j = i+1, \dots, k$. Set 
  $$V = \langle v_{i+1}, \dots, v_{k} \rangle \quad \text{and} \quad W = \langle w_{i+1}, \dots, w_{k} \rangle.$$
  
  By Lemma \ref{lemma: compliment module}, $(W^{\perp}, \; V^{\perp}\cap \widehat{W}, \; \lambda', \; q', \; \alpha')$ is a Wall pairing of rank $g-k+i$. It can then be easily checked that,
$$M(X, Y)_{v} \; = \; M(W^{\perp}, V^{\perp}\cap \widehat{W})_{((v_{1}, 0), \dots, (v_{i}, 0))}.$$
From this it follows that
$$M(X, Y)_{v}\cap \mathcal{O}(X\times \{0\}) \; \cong \; U(W^{\perp})_{(v_{1}, \dots,  v_{i})},$$
and thus by Theorem \ref{theorem: unimodular connectivity}, $|M(X, Y)_{v}\cap \mathcal{O}(X\times \{0\})|$ is $((g - k + i) - i - 3)$-connected. Since $v$ was arbitrary, it follows from Lemma \ref{lemma: intersection connectivity} that $|M(X, Y)|$ is $(g - 3)$-connected.
\end{proof}
  
\begin{proof}[Proof of Theorem \ref{theorem: wall complex connectivity}] The proof of this theorem is similar to the proof of \cite[Theorem 3.2]{Ch 87}.

We prove this theorem by induction on $g$, the rank of the Wall
pairing. Let $g = 1$. Proving the theorem in this case amounts to
proving that $|L(X, Y)|$ is $-1$-connected, i.e. non-empty. This is
trivial. In a Wall pairing of rank $1$, there always exists, by
definition, a pair $(x, y) \in X\times Y$ with $\lambda(x, y) = 1$ and
$\alpha(y) = 0$ and so the set of zero simplices is non-empty.
  
  Now suppose that the theorem holds for all Wall pairings $(X, Y,
  \lambda, q, \alpha)$ of rank less than $g$, so that for any such
  Wall pairing of rank $k < g$, the associated poset is $\frac{1}{2}(k
  -3)$-connected. Denote $d := \frac{1}{2}(g-3)$. For an element
  $$(v, w) = ((v_{1},w_{1}), \dots, (v_{i}, w_{i})) \in L(X, Y)$$
consider the poset $L(X, Y)_{(v,w)}$. Setting $V = \langle v_{1}, \dots, v_{i} \rangle$ and $W = \langle w_{1}, \dots, w_{i} \rangle$, Lemma \ref{lemma: compliment module} implies that $(W^{\perp}, \; V^{\perp}\cap \widehat{W}, \; \lambda, \;  q, \; \alpha)$ is a Wall pairing of rank $g - i$ (we are using the same notation as in Lemma \ref{lemma: compliment module}). We then have,
$$L(X, Y)_{(v, w)} \; = L(W^{\perp}, V^{\perp}\cap \widehat{W})$$
and this is $(d - \frac{1}{2}i)$-connected by the induction hypothesis, assuming that $i \geq 1$. We may repeat this argument to deduce further that $L(W^{\perp}, V^{\perp}\cap \widehat{W})_{z}$ is $(d - \frac{1}{2}i - |z|)$-connected for any element $z \in  L(W^{\perp}, V^{\perp}\cap \widehat{W})$. 

Recall the construction from Section \ref{Poset Basics}: If $S$ and $R$ are sets and $F \subset \mathcal{O}(R)$ is a sub-poset then $F\langle S\rangle$ is the poset defined by setting
$$F\langle S\rangle \; := \; \{(r_{1}, s_{1}), \dots, (r_{n}, s_{n}) \in \mathcal{O}(R)\times S \; | \; (r_{1}, \dots, r_{n}) \in F\; \}.$$
We will need to consider the poset $L(W^{\perp}, V^{\perp}\cap \widehat{W})\langle V\times \rho(V) \rangle$. 

Since $L(W^{\perp}, V^{\perp}\cap \widehat{W})$ is $(d - \frac{1}{2}i)$-connected, and that $L(W^{\perp}, V^{\perp}\cap \widehat{W})_{z}$ is $(d - \frac{1}{2}i - |z|)$-connected for any element $z \in  L(W^{\perp}, V^{\perp}\cap \widehat{W})$ as established above, 
Lemma \ref{lemma: homotopy isomorphisms} implies that the poset $L(W^{\perp}, V^{\perp}\cap \widehat{W})\langle V\times \rho(V) \rangle$ is $(d - \frac{1}{2}i)$-connected as well. 

Set $v := ((v_{1}, 0), \dots, (v_{i}, 0)) \in M(X, Y)\cap\mathcal{O}(X\times\{0\})$. Let $V = \langle v_{1}, \dots, v_{i} \rangle$ and 
$W = \langle w_{1}, \dots, w_{i} \rangle$ still be as above. We will need the following proposition. 
\begin{proposition} \label{prop: equivalence 1} With $V, \; W$, and $v = (v_{1}, 0), \dots, (v_{i}, 0))$ as above, there is an isomorphism of Posets,
\begin{equation} L(X, Y)\cap M(X, Y)_{v} \; \cong \; L(W^{\perp}, V^{\perp}\cap \widehat{W})\langle V\times \rho(V) \rangle. \end{equation}
In particular, the geometric realization 
 $|L(X, Y)\cap M(X, Y)_{v}|$ is $(d - \frac{1}{2}i)$-connected. 
\end{proposition}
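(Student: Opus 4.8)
The plan is to produce an explicit isomorphism of posets realizing the stated identification; the ``in particular'' clause is then immediate, since $|L(W^{\perp},V^{\perp}\cap\widehat W)\langle V\times\rho(V)\rangle|$ was shown to be $(d-\tfrac12 i)$-connected in the paragraph preceding the proposition (using Lemma~\ref{lemma: homotopy isomorphisms}). The first step is to unwind $L(X,Y)\cap M(X,Y)_{v}$ for $v=((v_1,0),\dots,(v_i,0))$. An element is a sequence $((x_1,y_1),\dots,(x_k,y_k))$ satisfying $\lambda(x_a,y_b)=\delta_{ab}\iota_n$, $q(y_a,y_b)=0$, $\alpha(y_a)=0$, and such that its concatenation with $v$ lies in $M(X,Y)$. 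Writing out the four defining conditions of $M(X,Y)$ for that concatenated sequence, and using that each $y_a\neq 0$ (because $\lambda(x_a,y_a)=\iota_n$), one sees that the only constraints not already imposed by $L(X,Y)$ are: (a) $(x_1,\dots,x_k,v_1,\dots,v_i)$ is unimodular in $X$; and (b) $\lambda(v_c,y_a)=0$ for all $c$, i.e.\ $y_a\in V^{\perp}$ for all $a$. So $L(X,Y)\cap M(X,Y)_{v}$ is the set of sequences in $\mathcal O(X\times V^{\perp})$ obeying $\lambda(x_a,y_b)=\delta_{ab}\iota_n$, $q(y_a,y_b)=0$, $\alpha(y_a)=0$, and (a).

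Next I set up two direct sum decompositions. Since $\lambda|_{V\times W}$ is a perfect pairing, $X=V\oplus W^{\perp}$, the list $(v_1,\dots,v_i)$ is a basis of $V$, and $V^{\perp}\cap W=0$. From the proof of Lemma~\ref{lemma: compliment module} one has $Y=(V^{\perp}\cap\widehat W)\oplus\rho(V)\oplus W$; since $\rho(V)\subset\Torsion(Y)\subset V^{\perp}$, intersecting with $V^{\perp}$ (using $V^{\perp}\cap W=0$) yields $V^{\perp}=(V^{\perp}\cap\widehat W)\oplus\rho(V)$. Combining the two splittings gives a bijection $\Psi\colon X\times V^{\perp}\longrightarrow\big(W^{\perp}\times(V^{\perp}\cap\widehat W)\big)\times\big(V\times\rho(V)\big)$ sending $(x,y)$ to $((a,b),(c,d))$, where $x=c+a$ with $c\in V,\ a\in W^{\perp}$ and $y=b+d$ with $b\in V^{\perp}\cap\widehat W,\ d\in\rho(V)$. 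Let $\Phi$ be the map on $\mathcal O(X\times V^{\perp})$ obtained by applying $\Psi$ to each entry of a sequence. Being defined entrywise from a bijection, $\Phi$ is an order-preserving bijection with order-preserving inverse, so it suffices to check that $\Phi$ restricts to a bijection between $L(X,Y)\cap M(X,Y)_{v}$ and $L(W^{\perp},V^{\perp}\cap\widehat W)\langle V\times\rho(V)\rangle$.

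This is a routine translation of the bilinear data. Because $\lambda$ is $\Z$-valued it annihilates $\Torsion(Y)$, so $\lambda(x_l,y_m)=\lambda(a_l,b_m)$; using $q(\rho(v),\cdot)=\lambda(v,\cdot)\circ\rho_{n+1}$ (Proposition~\ref{prop: uniqueness of rho}) together with $b_m,d_m\in V^{\perp}$ one gets $q(y_l,y_m)=q(b_l,b_m)$ and $q(b_l,d_l)=0$; and since $\alpha$ vanishes on $\Torsion(Y)$ (immediate from Definition~\ref{defn: wall pairing} and the summation formula (\ref{eq: alpha summation formula})), that same formula gives $\alpha(y_l)=\alpha(b_l)$. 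Hence the conditions $\lambda(x_a,y_b)=\delta_{ab}\iota_n$, $q(y_a,y_b)=0$, $\alpha(y_a)=0$ on $((x_a,y_a))_a$ are equivalent to the defining conditions of $L(W^{\perp},V^{\perp}\cap\widehat W)$ on $((a_a,b_a))_a$, while $c_a\in V$ and $d_a\in\rho(V)$ remain entirely unconstrained --- which is exactly the data comprising $L(W^{\perp},V^{\perp}\cap\widehat W)\langle V\times\rho(V)\rangle$. Finally, with respect to $X=V\oplus W^{\perp}$ and the basis $(v_1,\dots,v_i)$ of $V$, condition (a) --- unimodularity of $(x_1,\dots,x_k,v_1,\dots,v_i)$ in $X$ --- is equivalent to unimodularity of $(a_1,\dots,a_k)$ in $W^{\perp}$, which in turn follows automatically once $\lambda(a_l,b_m)=\delta_{lm}\iota_n$ holds, because $\lambda'$ is a perfect pairing by Lemma~\ref{lemma: compliment module}. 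Thus $\Phi$ is the desired poset isomorphism.

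The genuinely delicate step is the unwinding of the first paragraph: pinning down that the $M(X,Y)_{v}$ condition is equivalent to ``$y_a\in V^{\perp}$'' together with unimodularity of the enlarged sequence $(x_1,\dots,x_k,v_1,\dots,v_i)$, and then transporting that unimodularity condition across the splitting $X=V\oplus W^{\perp}$. Once the two direct sum decompositions above are in place, checking that the bilinear forms transform as claimed is mechanical.
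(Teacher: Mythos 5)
Your proof is correct and follows essentially the same route as the paper: both decompose $X=V\oplus W^{\perp}$ and $Y=(V^{\perp}\cap\widehat{W})\oplus W\oplus\rho(V)$, observe that the $W$-component of each $y_j$ vanishes, and define the isomorphism componentwise. Your writeup is in fact somewhat more detailed than the paper's (which leaves the verification of the bilinear identities and the unimodularity bookkeeping to the reader), but the underlying argument is identical.
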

\begin{proof}
We define a map 
\begin{equation} \label{eq: poset iso 2} L(X, Y)\cap M(X, Y)_{v} \longrightarrow L(W^{\perp}, V^{\perp}\cap \widehat{W})\langle V\times \rho(V) \rangle \end{equation}
as follows. Let $((x_{1}, y_{1}), \dots, (x_{k}, y_{k})) \in L(X, Y)\cap M(X, Y)_{v}$. We have,
$$X = V\oplus W^{\perp} \quad \text{and} \quad Y = (V^{\perp}\cap \widehat{W}) \oplus W \oplus \rho(V).$$
Then for each $j$, $x_{j}$ can be written uniquely as the sum,
 $$x_{j} = a_{j} + b_{j} \quad \text{for $a_{j} \in W^{\perp}$ and $b_{j} \in V$}$$
 and $y_{j}$ can be written uniquely as the sum
 $$y_{j} \; = \; c_{j} + d_{j} + e_{j} \quad \text{for $c_{j} \in V^{\perp}\cap\widehat{W}$, $d_{j} \in W$, and $e_{j} \in \rho(V)$.}$$
 However,  since 
$((x_{1}, y_{1}), \dots, (x_{k}, y_{k})) \in L(X, Y)\cap M(X, Y)_{v}$, it follows by definition that $\lambda(v, y_{j}) = 0$ for all $v \in V$ and $i = 1, \dots, k$. Thus, since $\lambda(v_{l}, w_{j}) = \delta_{l,j}\cdot\iota_{n}$ for all $l, j$, it follows that $d_{j} = 0$ for all $j \in \{1, \dots, k\}$.  
We define the map (\ref{eq: poset iso 2}) by the formula,
 $$((x_{1}, y_{1}), \dots, (x_{k}, y_{k})) \; \mapsto \; ((a_{1}, c_{1}), \dots, (a_{k}, c_{k}), \;  (b_{1}, e_{1}), \dots, (b_{k}, e_{k})).$$
 Given such a sequence 
 $$((a_{1}, c_{1}), \dots, (a_{k}, c_{k}), \;  (b_{1}, e_{1}), \dots, (b_{k}, e_{k})) \in L(W^{\perp}, V^{\perp}\cap \widehat{W})\langle V\times \rho(V) \rangle,$$
 it is easy to verify that the sequence 
 $$((a_{1} + b_{1}, c_{1} + e_{1}), \dots, (a_{k} + b_{k}, c_{k} +  e_{k}))$$
 is an element of $ L(X, Y)\cap M(X, Y)_{v}$. We define the inverse map via,
 $$((a_{1}, c_{1}), \dots, (a_{k}, c_{k}), \;  (b_{1}, e_{1}), \dots, (b_{k}, e_{k})) \; \mapsto \; ((a_{1} + b_{1}, c_{1} + e_{1}), \dots, (a_{k} + b_{k}, c_{k} +  e_{k})).$$
 This proves the proposition. 
\end{proof}
We now denote $F := M(X, Y)$ and filter $F$ by the sub-posets,
$$\begin{aligned}
F_{0} \; &= \; \{((x_{1}, y_{1}), \dots, (x_{k}, y_{k})) \in M(X, Y) \; | \; y_{j} \neq 0 \; \text{for some $j$}\},\\
F_{i} \; &= \; F_{i-1}\cup \{((x_{1}, 0), \dots, (x_{i}, 0)) \in M(X, Y)\cap\mathcal{O}(X\times\{0\}) \; \}.\\
\end{aligned}$$
\begin{proposition} \label{prop: deformation retraction}
The inclusion map $j: L(X, Y) \hookrightarrow F_{0}$ induces a
homotopy equivalence of the geometric realizations $|L(X, Y)|$ and
$|F_0|$. Furthermore, the pair $(|F_{0}|, |L(X, Y)|)$ is a deformation
retraction pair. \end{proposition}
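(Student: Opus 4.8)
The plan is to construct an explicit order‑preserving retraction $r\colon F_{0}\to L(X,Y)$ and then apply the elementary fact that order‑preserving poset maps $f\le g$ have homotopic geometric realizations (the same mechanism used throughout \cite{Ch 87}). Recall from (\ref{eq: sub poset chain}) that $L(X,Y)$ is exactly the set of sequences in $M(X,Y)$ all of whose second coordinates are nonzero, while $F_{0}$ consists of those sequences in $M(X,Y)$ with \emph{at least one} nonzero second coordinate; in particular $L(X,Y)\subseteq F_{0}\subseteq M(X,Y)$. I would define $r$ by sending $v=((x_{1},y_{1}),\dots,(x_{k},y_{k}))\in F_{0}$ to the ordered subsequence obtained by deleting every pair $(x_{i},y_{i})$ with $y_{i}=0$. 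Since $v\in F_{0}$ at least one pair survives, so $r(v)$ is a nonempty list of distinct elements of $X\times Y$.

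First I would verify that $r(v)$ actually lies in $L(X,Y)$. Let $i_{1}<\cdots<i_{m}$ be the surviving indices. Condition (iii) in the definition of $M(X,Y)$, applied to each surviving $y_{i_{b}}$ (which is nonzero), gives $\lambda(x_{i_{a}},y_{i_{b}})=\delta_{ab}\cdot\iota_{n}$ for all $a,b$; condition (iv) gives $q(y_{i_{a}},y_{i_{b}})=0$; and condition (ii) gives $\alpha(y_{i_{a}})=0$. These are precisely the three conditions defining $L(X,Y)$, so $r(v)\in L(X,Y)$. It is then immediate that $r$ is order‑preserving (deleting the zero‑$y$ pairs commutes with passing to an ordered sublist) and that $r$ restricts to the identity on $L(X,Y)$, since there all second coordinates are nonzero. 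Finally I would record the key observation that for every $v\in F_{0}$ the list $j(r(v))$ is an ordered sublist of $v$, i.e. $j\circ r(v)\le v$ in $\mathcal{O}(X\times Y)$.

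With these facts in hand the conclusion is formal. The inequality $j\circ r\le\mathrm{id}_{F_{0}}$ of order‑preserving self‑maps of $F_{0}$ realizes to a homotopy $|F_{0}|\times[0,1]\to|F_{0}|$ from $|j|\circ|r|$ to $\mathrm{id}_{|F_{0}|}$, obtained by realizing the order‑preserving map $F_{0}\times\{0<1\}\to F_{0}$, $(v,0)\mapsto j(r(v))$, $(v,1)\mapsto v$, together with the standard identification $|N(P\times\{0<1\})|\cong|N(P)|\times[0,1]$. Because $r$ is the identity on $L(X,Y)$, this homotopy is stationary on $|L(X,Y)|$, so $|L(X,Y)|$ is a strong deformation retract of $|F_{0}|$; in particular $|j|$ is a homotopy equivalence with homotopy inverse $|r|$, and since $|L(X,Y)|$ is moreover a subcomplex of $|F_{0}|$, the pair $(|F_{0}|,|L(X,Y)|)$ is a deformation retraction pair. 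The only point that requires genuine care — and the only place where the precise form of the definition of $M(X,Y)$ is used — is the well‑definedness of $r$: one must check that excising the zero entries from an element of $M(X,Y)$ with at least one nonzero entry yields a sequence satisfying all three conditions of $L(X,Y)$, which is exactly what condition (iii) in the definition of $M(X,Y)$ was designed to guarantee.
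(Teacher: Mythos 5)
Your proposal is correct and is essentially the paper's own argument: the paper defines the same retraction $h\colon F_{0}\to L(X,Y)$ by deleting the pairs with $y_{l}=0$ and concludes from $j\circ h(z)\le z$ that $|j\circ h|$ is homotopic to the identity. Your additional verification that the truncated list still satisfies the defining conditions of $L(X,Y)$ is a worthwhile detail the paper leaves implicit, but the route is the same.
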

\begin{proof} We define a retraction
\begin{equation} \label{eq: def retraction} h: F_{0} \longrightarrow L(X, Y) \end{equation}
by setting $h((x_{1}, y_{1}), \dots, (x_{k}, y_{k}))$ equal to the sublist consisting of those $(x_{l}, y_{l})$ such that $y_{l} \neq 0$. It follows immediately that $h\circ j = Id_{L(X,Y)}$. Now, notice that for all $z \in F_{0}$, we have $j\circ h(z) \leq z$. It follows from this that the induced map $|j\circ h|: |F_{0}| \rightarrow |F_{0}|$ is homotopic to the identity. 
This proves that $|h|: |F_{0}| \rightarrow |L(X, Y)|$ is a deformation-retraction.  
\end{proof}
To obtain the space $|F_{i}|$ from $|F_{i-1}|$ we attach a cone to the
link (in $F_{i-1}$) of each element of the form $v = ((v_{0}, 0),
\dots, (v_{i}, 0)) \in F_{i} \setminus F_{i-1}$ (all elements of
$F_{i} \setminus F_{i-1}$ are of this form). Now it is easy to see
that for any such $v \in F_{i} \setminus F_{i-1}$,
$$\text{link}_{F_{i-1}}(v) \; = \; \text{link}_{F_{i}}(v).$$
By the results from Section \ref{Poset Basics} we have
$$ |\text{link}_{F_{i-1}}(v)| \; = \;  |\text{link}_{F_{i}}(v)| \; \cong \; \Sigma^{i-1}|Z^{i}(F_{i})_{v}|.$$
Now, the poset $(F_{i})_{v}$ is contained in $F_{0}$ and the restriction to $F_{0}$ of the map $h: F_{0} \longrightarrow L(X, Y)$ from the proof of Proposition \ref{prop: deformation retraction}, has the sub-poset 
$$(L(X, Y)\cap F_{v}) \subset L(X, Y)$$ 
as its image.  In a way similar to the proof of Proposition \ref{prop: deformation retraction} it can be checked that, 
$$|h\vert_{(F_{i})_{v}}|: |(F_{i})_{v}| \longrightarrow |L(X, Y)\cap F_{v}|$$ 
and the induced map,
$$|Z^{i}(h\vert_{(F_{i})_{v}})|: |Z^{i}(F_{i})_{v}| \longrightarrow |Z^{i}(L(X, Y)\cap F_{v})|$$
are deformation retractions.
By Proposition \ref{prop: equivalence 1}, $|L(X, Y)\cap F_{v}|$ is $(d - \frac{1}{2}i)$-connected. We then apply Lemma \ref{lemma: homotopy isomorphisms} to get that $|Z^{i}(L(X, Y)\cap F_{v})|$ is $(d - \frac{1}{2}i)$-connected and thus by the above deformation retractions, $\Sigma^{i-1}|Z^{i}(F_{i})_{v}|$ is $(d + \frac{1}{2}i -1)$-connected. Notice that if $i \geq 2$, $|F_{i}|$ is obtained from $|F_{i-1}|$ by attaching cones to $d$-connected spaces. Thus for $j \leq d$, an inductive argument using the Mayer-Vietoris sequence and Van-Kampen's theorem gives,
$$\pi_{j}(F_{1}) = \pi_{j}(F_{2}) = \cdots = \pi_{j}(M(X, Y)) = 0.$$
If $i = 1$, the space $|\text{link}_{F_{1}}(v)|$ is only $(d - \frac{1}{2})$-connected. In this case, 
$$\pi_{j}(L(X,Y)) = \pi_{j}(F_{0}) = \pi_{j}(F_{1}) = 0$$
provided $j < d$, but for $j = d$ we only get a surjection 
$$\bigoplus_{v \in F_{1}\setminus F_{0}}
\pi_{d}(\text{link}_{F_{1}}(v)) \longrightarrow \pi_{d}(F_{0}).$$ The
surjection here follows from the Mayer-Vietoris sequence (or the van
Kampen theorem if $d = 1$) and the fact that $\pi_{d}(F_{1}) = 0$.

Now let $\hat{v} = (v, 0) \in F_{1}\setminus F_{0}$. We can choose $w \in Y$ such that $(v, w) \in L(X, Y)$ and define a map
$$\Psi: L(X,Y)_{(v, w)} \longrightarrow \text{link}_{F_{1}}(\hat{v})$$
by 
$$((x_{1}, y_{y}), \dots, (x_{j}, y_{j})) \;  \mapsto \; ((x_{1}, y_{1}), \dots, (x_{j}, y_{j}), (v, 0)).$$ 
We claim that $\Psi$ induces an isomorphism on $\pi_{d}$. We have a commutative diagram
$$\xymatrix{
L(X, Y)_{(v,w)} \ar[rrr]^{\Psi} \ar[d]^{\cong} &&& \text{link}_{F_{1}}(\hat{v}) \ar[d]^{\cong} \\
L(W^{\perp}, V^{\perp}\cap \widehat{W}) \ar[d]^{l_{1}\circ (0)_{*}} &&& Z^{1}(F_{1})_{\hat{v}} \\
Z^{1}(L(W^{\perp}, V^{\perp}\cap \widehat{W})\langle V\times \rho(V) \rangle \ar[rrr]^{\cong} &&& Z^{1}(L(X, Y)\cap M(X, Y)_{\hat{v}}) \ar[u]^{\cong} 
 }
$$
where $V = \langle v \rangle$, $W = \langle w \rangle$, and $l_{1}\circ (0)_{*}$ is the map defined in Lemma \ref{lemma: homotopy isomorphisms}.
 The map 
 $$l_{1}\circ (0)_{*}: L(W^{\perp}, V^{\perp}\cap \widehat{W}) \longrightarrow Z^{1}(L(W^{\perp}, V^{\perp}\cap \widehat{W})\langle V\times \rho(V) \rangle$$
 induces an isomorphism on $\pi_{d}$ by Lemma \ref{lemma: homotopy isomorphisms}, thus commutativity of the above diagram implies that $\Psi$ does indeed induce an isomorphism on $\pi_{d}$. Combining these observations, we get a commutative diagram,
 $$\xymatrix{
 \bigoplus_{\hat{v} \in F_{1} \setminus F_{0}}\pi_{d}(\text{link}_{F_{1}}(\hat{v})) \ar[rr] && \pi_{d}(F_{0})\\
 \bigoplus_{\hat{v} \in F_{1} \setminus F_{0}}\pi_{d}(L(X, Y)_{(v, w)}) \ar[u]^{\Psi_{*}}_{\cong} \ar[rr] && \pi_{d}(L(X, Y)) \ar[u]^{h_{*}}_{\cong}}$$
 where the bottom horizontal map is induced by the inclusion $L(X, Y)_{(v, w)} \hookrightarrow L(X, Y)$. By Lemma \ref{lemma: null homotopy}, this inclusion is nullhomotpic for all $(v, w)$. Commutativity of the above diagram then implies that the top-horizontal map is the zero map. However, as established before, this top-horizontal map is a surjection and so the fact that it is also the zero map implies that the group $\pi_{d}(F_{0})$ is the zero group. This completes the induction and the proof of the theorem.
\end{proof}

We have now established that if $(X, Y, \lambda, q, \alpha)$ is a Wall pairing of rank $g$ then $|L(X, Y)|$ is $\frac{1}{2}(g-3)$-connected. This implies that the geometric realization of the simplicial complex $K^{\pi}(W_{g,1})$ is $\frac{1}{2}(g-3)$-connected as well. In order to finish the proof of Theorem \ref{thm: pi cohen mac}, we need to verify that $K^{\pi}(W_{g,1})$ satisfies the weak Cohen-Macauley condition from Definition \ref{defn: cohen mac}. 

\begin{corollary}  The complex $K^{\pi}(W_{g,1})$ is \textit{weakly Cohen-Macaulay} of dimension $\frac{1}{2}(g-1)$. \end{corollary}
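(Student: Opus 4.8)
The plan is to verify the two conditions of Definition~\ref{defn: cohen mac} with the integer there taken to be $\tfrac{1}{2}(g-1)$: first that $|K^{\pi}(W_{g,1})|$ is $(\tfrac{1}{2}(g-1)-1)$-connected, i.e.\ $\tfrac{1}{2}(g-3)$-connected; and second that the link of every $p$-simplex of $K^{\pi}(W_{g,1})$ is $(\tfrac{1}{2}(g-1)-p-2)$-connected. The first condition is immediate from Theorem~\ref{theorem: wall complex connectivity} applied to the rank $g$ Wall pairing $(\pi_{n}(W_{g,1}),\pi_{n+1}(W_{g,1}),\lambda_{n,n+1},\lambda_{n+1,n+1},\alpha_{n+1})$, together with the observation recorded at the start of Section~\ref{section: The Intersection Complex} that $|L(X,Y)|$ is homeomorphic to the barycentric subdivision of $|K^{\pi}_{\bullet}(W_{g,1})|$ and that $|K^{\pi}(W_{g,1})|$ is at least as connected as $|K^{\pi}_{\bullet}(W_{g,1})|$.

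For the second condition I would fix a $p$-simplex $\sigma=\{(x_{0},y_{0}),\dots,(x_{p},y_{p})\}$ of $K^{\pi}(W_{g,1})$ and set $V:=\langle x_{0},\dots,x_{p}\rangle$ and $W:=\langle y_{0},\dots,y_{p}\rangle$. Since $\lambda_{n,n+1}(x_{i},y_{j})=\delta_{i,j}\cdot\iota_{n}$, the homomorphisms $x\mapsto(\lambda_{n,n+1}(x,y_{0}),\dots,\lambda_{n,n+1}(x,y_{p}))$ and $y\mapsto(\lambda_{n,n+1}(x_{0},y),\dots,\lambda_{n,n+1}(x_{p},y))$ carry $V$ and $W$ isomorphically onto $\Z^{\oplus(p+1)}$; hence $V$ and $W$ are free of rank $p+1$, each a direct summand, and $\lambda_{n,n+1}$ restricts to a perfect pairing on $V\times W$. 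Thus the hypotheses of Lemma~\ref{lemma: compliment module} are met, and $(W^{\perp},V^{\perp}\cap\widehat{W},\lambda',q',\alpha')$ is a Wall pairing of rank $g-(p+1)$, the rank being nonnegative because unimodularity of $(x_{0},\dots,x_{p})$ forces $p+1\le g$. Unwinding conditions i.--iii.\ of Definition~\ref{defn: wall complex}, in the same spirit as the identification~\eqref{eq: identification of link}, and using the symmetry of $q$ together with the definitions of $W^{\perp}$, $V^{\perp}$ and $\widehat{W}$, I would then check that $\link(\sigma)$ in $K^{\pi}(W_{g,1})$ is isomorphic, as a simplicial complex, to $K(W^{\perp},V^{\perp}\cap\widehat{W},\lambda',q',\alpha')$. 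Applying Theorem~\ref{theorem: wall complex connectivity} once more, $\link(\sigma)$ is $\tfrac{1}{2}(g-p-4)$-connected; since $\tfrac{1}{2}(g-p-4)\ge\tfrac{1}{2}(g-1)-p-2$ whenever $p\ge-1$, the required bound follows at once.

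The step that needs real care is the identification of the link: one must confirm that being a simplex of $K^{\pi}(W_{g,1})$ already forces $V$ and $W$ to be unimodular summands with $\lambda_{n,n+1}$ perfect on $V\times W$ — so that Lemma~\ref{lemma: compliment module} genuinely applies — and then verify both inclusions between $\link(\sigma)$ and $K(W^{\perp},V^{\perp}\cap\widehat{W},\lambda',q',\alpha')$ against the three defining conditions of Definition~\ref{defn: wall complex}. Once that bookkeeping is in place, the corollary amounts to two invocations of Theorem~\ref{theorem: wall complex connectivity}.
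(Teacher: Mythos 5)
Your proposal is correct and follows essentially the same route as the paper: identify the link of a $p$-simplex with the complex associated to the rank $g-p-1$ Wall pairing $(W^{\perp}, V^{\perp}\cap\widehat{W},\lambda',q',\alpha')$ supplied by Lemma~\ref{lemma: compliment module}, then apply Theorem~\ref{theorem: wall complex connectivity} twice. Your explicit verification that $V$ and $W$ are unimodular free summands with $\lambda$ perfect on $V\times W$, and your final inequality $\tfrac{1}{2}(g-p-4)\ge\tfrac{1}{2}(g-1)-p-2$, are exactly the bookkeeping the paper leaves implicit.
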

  \begin{proof} It will suffice to prove that for any $p$-simplex $v = ((v_{0}, w_{0}), \dots, (v_{p}, w_{p}))$, the complex, $\text{link}(v)$ is $(\frac{1}{2}(g - 1) - p - 1)$-connected. Let $V = \langle v_{0}, \dots, v_{p} \rangle \subset \pi_{n}(W_{g,1})$ and $W = \langle w_{0}, \dots, w_{p} \rangle \subset \pi_{n+1}(W_{g,1})$. Lemma \ref{lemma: compliment module} implies that the tuple,
  $$(W^{\perp}, V^{\perp}\cap \widehat{W}, \lambda'_{n, n+1}, \lambda_{n+1, n+1}', \alpha')$$ 
  is a Wall pairing of rank $g - p - 1$, where $\lambda'_{n, n+1}, \lambda_{n+1, n+1}',$ and $\alpha'$ denote the restrictions. Now, a simplex $\{(x_{0}, y_{0}), \dots, (x_{k}, y_{k})\} \in K^{\pi}(W_{g,1})$ is in the sub-complex $\text{link}(v)$ if and only if 
  $$\lambda_{n, n+1}(v_{i}, y_{j}) = 0, \quad  \lambda_{n, n+1}(x_{i}, w_{j}) = 0, \quad \text{and} \quad \lambda_{n+1, n+1}(y_{i}, w_{i}) = 0$$
  for all $i, j$. This implies that $\{(x_{0}, y_{0}), \dots, (x_{k}, y_{k})\}$ is in $\text{link}(v)$ if and only if  the sequence $((x_{0}, y_{0}), \dots, (x_{k}, y_{k}))$ (for any ordering of the vertices) it is an element of the poset $L(W^{\perp}, V^{\perp}\cap \widehat{W})$. From the discussion in the beginning of Section \ref{section: The Intersection Complex}, it follows that there is a homeomorphism of geometric realizations,
  $$|\text{link}(v)_{\bullet}| \; \cong \; |L(W^{\perp}, V^{\perp}\cap \widehat{W})|$$    
  where $\text{link}(v)_{\bullet}$ is the semi-simplicial set associated to the complex $\text{link}(v)$.
  The proof of the corollary then follows from Theorem \ref{theorem: wall complex connectivity}.
  \end{proof}
\section{Resolutions of Moduli Spaces} \label{Resolutions of Moduli Spaces}
\subsection{An Augmented Semi-Simplicial Space}
We now use the high-connectivity of the geometric realization
$|\bar{K}_{\bullet}(W_{g,1})|$ (Corollary \ref{connectivity of K-
  bar}) to prove Theorem \ref{thm: Main Theorem}. The constructions and
proofs of this section go through in exactly same way as in
\cite[Section 5]{GRW 12} and thus are provided for the convenience of the reader. 

Recall from Section \ref{Basic Definitions} that the topology of $\mathcal{M}_{g}$ was defined by the homeomorphism
$$\mathcal{M}_{g} \; \cong \; \Emb(W_{g,1}, \; [0,\infty)\times \R^{\infty})^{\partial}/\Diff(W_{g,1})^{\partial},$$
and that the quotient map 
$\Emb(W_{g,1}, \; [0,\infty)\times \R^{\infty})^{\partial} \longrightarrow \mathcal{M}_{g}$
is a locally trivial fibre bundle.

For each non-negative integer $p$, the topological group $\Diff(W_{g,1})^{\partial}$ acts on the space $\bar{K}_{p}(W_{g,1})$ by
$$(\psi, \; (\phi_{0}, t_{0}), \dots, (\phi_{p}, t_{p})) \; \mapsto \; ((\psi\circ\phi_{0}, t_{0}), \dots, (\psi\circ\phi_{p}, t_{p})),$$
where $\psi \in \Diff(W_{g,1})^{\partial}$ and $(\phi_{0}, t_{0}), \dots, (\phi_{p}, t_{p}) \in \bar{K}_{p}(W_{g,1})$. Notice that since $\Diff(W_{g,1})^{\partial}$ is defined to be the group of diffeomorphisms of $W_{g,1}$ that fix a neighborhood of the boundary $\partial W_{g,1}$ pointwise, the pair $(\psi\circ \phi_{i}, t_{i})$ still satisfies condition (i.) of Definition \ref{defn: the embedding complex} for $i = 0, \dots, p$. Thus, this group-action is well defined. 

\begin{defn} For each integer $p \geq 0$, we define $X_{p}(W_{g,1})$ to be the space of pairs $(W, \phi)$ where $W \in \mathcal{M}_{g}$ and $\phi \in \bar{K}_{p}(W_{g,1})$, topologized as 
$$X_{p}(W_{g,1}) = (\Emb(W_{g,1}, \; [0,\infty)\times \R^{\infty})^{\partial} \times \bar{K}_{p}(W_{g,1}))/ \Diff(W_{g,1})^{\partial}.$$
\end{defn}

This makes $X_{\bullet}$ into a semi-simplicial space augmented over $\mathcal{M}_{g}$. By the local triviality of the quotient map defining the topology on $\mathcal{M}_{g}$, the augmentation $X_{\bullet} \rightarrow \mathcal{M}_{g}$ is locally trivial with fibres $\bar{K}_{\bullet}(W_{g,1})$. 
\begin{proposition} The map $|X_{\bullet}(W_{g,1})| \rightarrow \mathcal{M}_{g}$ induced by the augmentation is $\frac{1}{2}(g-3)$-connected. \end{proposition}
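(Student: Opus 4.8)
The plan is to recognise $|X_{\bullet}(W_{g,1})| \to \mathcal{M}_{g}$ as a fibre bundle whose fibre is $|\bar{K}_{\bullet}(W_{g,1})|$, and then to read off the connectivity from Corollary~\ref{connectivity of K- bar} using the long exact sequence of a fibration. Since Corollary~\ref{connectivity of K- bar} tells us the fibre is $\frac{1}{2}(g-3)$-connected, a fibration with that fibre automatically has a $\frac{1}{2}(g-3)$-connected (indeed $(\frac{1}{2}(g-3)+1)$-connected) projection map, which is precisely the assertion.

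First I would recall from Section~\ref{The Moduli Spaces} that the quotient map $q\colon \Emb(W_{g,1},[0,\infty)\times\R^{\infty})^{\partial} \to \mathcal{M}_{g}$ is a locally trivial principal $\Diff(W_{g,1})^{\partial}$-bundle (by \cite{BF 81}). Choosing a trivialising open cover $\{U_{\alpha}\}$ of $\mathcal{M}_{g}$, one has $q^{-1}(U_{\alpha}) \cong U_{\alpha}\times\Diff(W_{g,1})^{\partial}$ as $\Diff(W_{g,1})^{\partial}$-spaces. Since $X_{p}(W_{g,1})$ is the associated bundle $(\Emb(W_{g,1},[0,\infty)\times\R^{\infty})^{\partial}\times\bar{K}_{p}(W_{g,1}))/\Diff(W_{g,1})^{\partial}$ for the diagonal action, and the $\Diff(W_{g,1})^{\partial}$-action on $\bar{K}_{p}(W_{g,1})$ commutes with every face map $d_{i}$, restricting over $U_{\alpha}$ and cancelling the free factor gives homeomorphisms $X_{p}(W_{g,1})|_{U_{\alpha}} \cong U_{\alpha}\times\bar{K}_{p}(W_{g,1})$ compatible with the face maps and the augmentation; in other words $X_{\bullet}(W_{g,1})|_{U_{\alpha}} \cong U_{\alpha}\times\bar{K}_{\bullet}(W_{g,1})$ as augmented semi-simplicial spaces over $U_{\alpha}$. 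Then I would pass to geometric realisations in the semi-simplicial direction: since realisation commutes with taking a product with the constant semi-simplicial space $U_{\alpha}$, this yields $|X_{\bullet}(W_{g,1})|_{U_{\alpha}} \cong U_{\alpha}\times|\bar{K}_{\bullet}(W_{g,1})|$ over $U_{\alpha}$. Hence $|X_{\bullet}(W_{g,1})| \to \mathcal{M}_{g}$ is a locally trivial bundle with fibre $|\bar{K}_{\bullet}(W_{g,1})|$ --- the fibre is the same over every point, since each $W \in \mathcal{M}_{g}$ is diffeomorphic to $W_{g,1}$ relative to its boundary --- and in particular a Serre fibration. Finally, feeding the $\frac{1}{2}(g-3)$-connectivity of the fibre (Corollary~\ref{connectivity of K- bar}) into the homotopy long exact sequence shows that the augmentation induces an isomorphism on $\pi_{i}$ for $i \le \frac{1}{2}(g-3)$ and a surjection on $\pi_{\frac{1}{2}(g-3)+1}$ --- basepoint-freely, since both spaces are path-connected once $g \ge 3$ --- which proves the proposition.

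I do not expect a genuine obstacle here: the whole argument is formal, matching \cite[Section~5]{GRW 12}. The only points needing a little care are bookkeeping: that the local trivialisation is genuinely compatible with the semi-simplicial structure maps (immediate, because the diffeomorphism action was set up to commute with the $d_{i}$, so it descends to geometric realisations), that geometric realisation of semi-simplicial spaces commutes with a product with a fixed space, and that $\mathcal{M}_{g}$ is path-connected in the relevant range (which holds for $g \ge 3$ by Theorem~\ref{lemma: highly connected map}, and for smaller $g$ the claim is vacuous). If one preferred to bypass fibration technology altogether, one could instead filter $|X_{\bullet}(W_{g,1})|$ by semi-simplicial skeleta and run a fibrewise Mayer--Vietoris/van Kampen induction over $\mathcal{M}_{g}$, but the fibre-bundle route is the shortest.
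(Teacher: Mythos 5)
Your argument is correct and is essentially the paper's own proof: the paper likewise identifies $|X_{\bullet}(W_{g,1})| \rightarrow \mathcal{M}_{g}$ as a locally trivial fibre bundle with fibre $|\bar{K}_{\bullet}(W_{g,1})|$ and concludes via the long exact sequence of homotopy groups, citing \cite[Proposition 5.2]{GRW 12}; you have merely spelled out the local-triviality and realisation-commutes-with-products bookkeeping that the paper leaves implicit. (The only cosmetic slip is attributing path-connectivity of $\mathcal{M}_{g}$ to Theorem \ref{lemma: highly connected map}; it holds for all $g$ since $\mathcal{M}_{g}\sim\BDiff(W_{g,1})^{\partial}$ is a classifying space.)
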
 
\begin{proof} This is the same as \cite[Proposition 5.2]{GRW 12}. The map $|X_{\bullet}| \rightarrow \mathcal{M}_{g}$ is a locally trivial fibre-bundle with fibre $|\bar{K}_{\bullet}(W_{g,1})|$ which is $\frac{1}{2}(g-3)$-connected. The claim follows from the long exact sequence in homotopy groups. \end{proof}

We now proceed to construct weak homotopy equivalences $\mathcal{M}_{g-p} \sim X_{p-1}$. Consider $S^{2n}$ as the submanifold of $\R^{\infty}$ given by
$$\{(x_{1}, \dots, x_{k}, \dots )) \in \R^{\infty} \; | \; x_{1}^{2} + \cdots + x_{2n+1}^{2} = 1; \; x_{j} = 0 \; \text{ if \; $j > 2n+1$}\}$$
and denote by $C$ the submanifold of $[0,1]\times\R^{\infty}$ given by the product,
$$[0,1]\times S^{2n} \subset [0,1]\times\R^{\infty}.$$  
Let $a: [0,1)\times \R^{2n} \rightarrow C$ be an embedding satisfying $a^{-1}(\{0\}\times S^{2n}) = \{0\}\times\R^{2n}$.
Let $S_{1} \subset [0, 1]\times \R^{\infty}$ be the manifold obtained from the cylinder $C$ by forming the connected sum with an embedded copy of $S^{n}\times S^{n+1}$ in $[0,1]\times\R^{\infty}$, along a small disk in $C$ disjoint from the image of the embedding $a$. From the connected-sum, $S_{1}$ comes with a canonical embedding $W_{1, 1} \hookrightarrow S_{1}$, and we pick an extension of this to an embedding $\phi_{0}: H \rightarrow S_{1}$, in such a way so that $(\phi_{0}, t_{0}) \in \bar{K}_{0}(S_{1}, a)$ for some choice of $t_{0} \in \R$, 
%for which $(\phi_{0}, t_{0})$ satisfies condition (i.) of Definition \ref{defn: the embedding complex} with respect to the embedding 
where $a$ is the embedding chosen above. 
(From here forward we will again suppress the embedding $a$ from the notation and write $\bar{K}_{0}(S_{1}) := \bar{K}_{0}(S_{1}, a)$.)
For $j$ a positive integer, let 
$T_{j}: \R\times\R^{\infty} \rightarrow \R\times\R^{\infty}$
be the linear translation given by the formula $(t, x) \; \mapsto \; (t + j, x)$. 
For positive integer $p$, we then define $S_{p} \subset [0, p]\times \R^{\infty}$ to be the submanifold given by the $p$-fold concatenation,
$$S_{p} := S_{1}\cup T_{1}(S_{1}) \cup \cdots \cup T_{p-1}(S_{1}).$$ 
This submanifold $S_{p}$ has $p$ pairwise disjoint embeddings 
$W_{1, 1} \rightarrow T_{j}(S_{1}) \hookrightarrow S_{p}$
coming from the connected sums of the copies of $S^{n}\times S^{n+1}$.  As above, we extend each of these to embeddings $\phi_{i}: H \rightarrow S_{p}$ so that 
$((\phi_{0}, t_{0}) \dots, (\phi_{p-1}, t_{p-1})) \in \bar{K}_{p-1}(S_{p})$
for some choice of real numbers $t_{0} < \cdots < t_{p-1}$. For any such choices we get a map
\begin{equation} \label{eq: stabilizer} 
\begin{aligned}
&\mathcal{M}_{g-p} \; \longrightarrow \; X_{p-1}, \\
& W \; \mapsto \; (S_{p} \cup (pe_{1} + W), \; ((\phi_{0}, t_{0}),  \dots, (\phi_{p-1}, t_{p-1})) \;). 
\end{aligned}
\end{equation}
\begin{proposition} \label{prop: stabilizer} For $g - p \geq 3$, the map $(\ref{eq: stabilizer})$ is a weak homotopy equivalence. \end{proposition}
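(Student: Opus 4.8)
The plan is to carry out the argument of \cite[Section 5]{GRW 12} in the present setting, recognizing both $\mathcal{M}_{g-p}$ and $X_{p-1}$ as homotopy orbit spaces of diffeomorphism group actions. Since the embedding space $\Emb(W_{g,1},[0,\infty)\times\R^\infty)^\partial$ is weakly contractible and the quotient map by the free $\Diff(W_{g,1})^\partial$-action is a fibre bundle, the definition of $X_{p-1}$ yields a weak equivalence $X_{p-1}\simeq\bigl(\bar K_{p-1}(W_{g,1})\bigr)_{h\Diff(W_{g,1})^\partial}$, and $(\ref{eq: weak equivalence moduli})$ gives $\mathcal{M}_{g-p}\simeq B\Diff(W_{g-p,1})^\partial$. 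After fixing a diffeomorphism $W_{g,1}\cong S_p\cup_\partial(pe_1+W_{g-p,1})$, the distinguished tuple $\sigma_0:=((\phi_0,t_0),\dots,(\phi_{p-1},t_{p-1}))\in\bar K_{p-1}(W_{g,1})$ is fixed by the subgroup $\Diff(W_{g-p,1})^\partial\le\Diff(W_{g,1})^\partial$ of diffeomorphisms extended by the identity over $S_p$, and under these identifications the map $(\ref{eq: stabilizer})$ becomes the natural map $B\Diff(W_{g-p,1})^\partial\to\bigl(\bar K_{p-1}(W_{g,1})\bigr)_{h\Diff(W_{g,1})^\partial}$ classified by this inclusion of groups and the point $\sigma_0$. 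Such a map is a weak equivalence as soon as the orbit map $\Diff(W_{g,1})^\partial/\Diff(W_{g-p,1})^\partial\to\bar K_{p-1}(W_{g,1})$, $[\psi]\mapsto\psi\cdot\sigma_0$, is one; since orbit maps of smooth actions of diffeomorphism groups on spaces of embeddings are fibre bundles (cf. \cite{BF 81}), every orbit is open and closed in $\bar K_{p-1}(W_{g,1})$, so it suffices to establish (A) that $\Diff(W_{g,1})^\partial$ acts transitively on $\pi_0\bar K_{p-1}(W_{g,1})$, and (B) that the stabilizer of $\sigma_0$ is weakly equivalent to $\Diff(W_{g-p,1})^\partial$.

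For (B), the stabilizer of $\sigma_0$ consists of the diffeomorphisms in $\Diff(W_{g,1})^\partial$ that restrict to the identity on $\bigcup_i\phi_i(H)$. Restricting such a diffeomorphism to $\mathrm{Cl}\bigl(W_{g,1}\setminus\bigcup_i\phi_i(H)\bigr)$, and invoking Corollary $\ref{cor: cancellation}$ $p$ times to identify this complement, rel boundary, with $W_{g-p,1}$, identifies the stabilizer with a group of diffeomorphisms of $W_{g-p,1}$ fixing its boundary; a routine comparison between fixing the boundary pointwise and fixing a collar then shows this group is weakly equivalent to $\Diff(W_{g-p,1})^\partial$.

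For (A), I would induct on $p$, the statement being that $\Diff(W_{g,1})^\partial$ acts transitively on $\pi_0\bar K_{p-1}(W_{g,1})$ whenever $g-p\ge 3$. The base case $p=1$ is Corollary $\ref{cor: transitivity}$, together with the observation in the proof of Corollary $\ref{connectivity of K- bar}$ (precomposing with the isotopy $\rho_t$) that lets one pass between disjointness of cores and disjointness of images. For the inductive step, given tuples $\phi_\bullet,\psi_\bullet\in\bar K_{p-1}(W_{g,1})$, Corollary $\ref{cor: transitivity}$ produces $f_0\in\Diff(W_{g,1})^\partial$ with $f_0\circ\phi_0=\psi_0$; then $f_0\circ\phi_\bullet$ and $\psi_\bullet$ agree in their zeroth entry, and their remaining entries are $(p-1)$-tuples of disjoint copies of $H$ inside $\mathrm{Cl}(W_{g,1}\setminus\psi_0(H))$, which by Corollary $\ref{cor: cancellation}$ is diffeomorphic rel boundary to $W_{g-1,1}$ with $(g-1)-(p-1)=g-p\ge 3$. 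The inductive hypothesis produces a diffeomorphism of that complement carrying one tuple to the other; extending it by the identity over $\psi_0(H)$ completes the step.

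The step I expect to be the main obstacle is the boundary bookkeeping underlying (A) and (B): each $\phi_i(H)$ meets $\partial W_{g,1}$ in a disk, so cutting along the handles reparametrizes the boundary sphere, and one must verify that the diffeomorphisms supplied by Corollaries $\ref{cor: transitivity}$ and $\ref{cor: cancellation}$ do, after extension by the identity, genuinely lie in the subgroup fixing a neighborhood of $\partial W_{g,1}$. This is also precisely where the hypothesis $g-p\ge 3$ is used, since Corollary $\ref{cor: transitivity}$ (valid for $g\ge 3$) and Corollary $\ref{cor: cancellation}$ are applied successively in $W_{g,1},W_{g-1,1},\dots,W_{g-p+1,1}$.
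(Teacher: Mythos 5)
Your argument is correct and rests on exactly the two geometric inputs the paper needs, Corollary \ref{cor: transitivity} (transitivity) and Corollary \ref{cor: cancellation} (cancellation), but it is organized differently from the proof the paper defers to. The argument of \cite[Proposition 5.3]{GRW 12} runs through the restriction map $r\colon X_{p-1}\to \Emb(\{0,\dots,p-1\}\times H,\,[0,\infty)\times\R^{\infty})$ to the \emph{ambient} embedding space: this is a Serre fibration with weakly contractible target, so $X_{p-1}$ is weakly equivalent to the fibre over the standard tuple, namely the subspace of $\mathcal{M}_{g}$ of submanifolds containing $S_{p}$, which cancellation identifies with $\mathcal{M}_{g-p}$ by passing to complements. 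You instead read $X_{p-1}$ as the homotopy quotient of $\bar{K}_{p-1}(W_{g,1})$ by $\Diff(W_{g,1})^{\partial}$ and run orbit--stabilizer: transitivity of the action on $\pi_{0}$ (your (A), by induction on $p$ from Corollaries \ref{cor: transitivity} and \ref{cor: cancellation}) plus identification of the stabilizer of $\sigma_{0}$ with $\Diff(W_{g-p,1})^{\partial}$ (your (B)). The two routes are dual uses of the same isotopy-extension fibrations; yours makes the group-theoretic content explicit at the cost of having to analyze the stabilizer (the jet condition along $\partial\phi_{i}(H)$ hidden in your ``collar comparison''), whereas the paper's route never sees the stabilizer because the fibre of $r$ is already a moduli space of manifolds. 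Two points to keep honest in your version: first, Corollary \ref{cor: transitivity} only produces a diffeomorphism \emph{isotopic} to the identity on the boundary, and vertices of $\bar{K}_{0}$ carry the parameter $t$ and a boundary germ condition, so one must slide the parameters and correct $f$ on a collar before it lies in $\Diff(W_{g,1})^{\partial}$ --- this is the bookkeeping you flag, and it is genuinely needed for the orbit map to be defined; second, openness of orbits comes from the isotopy extension theorem (orbits are unions of path components of a locally path-connected space), not directly from the orbit map being a bundle onto its image. Neither point is a gap beyond the level of detail the paper itself supplies.
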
 
\begin{proof} This is the same as \cite[Proposition 5.3]{GRW 12}. The proof of Proposition 5.3 in \cite{GRW 12} uses analogues of Corollaries \ref{cor: transitivity} and \ref{cor: cancellation} for the manifolds considered in that paper. Since we have these two results, the proof goes through in exactly the same way.
\begin{comment}
There is a restriction map 
$$r:  X_{p-1} \longrightarrow \Emb(\{0, \dots, p-1\}\times H, \; [0, \infty)\times\R^{\infty})$$
given by sending
$[\varphi, ((\gamma_{0}, s_{0}), \dots, (\gamma_{p-1}, s_{p-1}))] \in X_{p-1}$
to the embedding given by
$$\bigsqcup_{i=0}^{p-1}\varphi\circ\gamma_{i}: \; \bigsqcup_{i=0}^{p-1}\{i\}\times H \longrightarrow [0, \infty)\times\R^{\infty}.$$
This map is a Serre-fibration and the target space is weakly contractible, hence $X_{p-1}$ is weakly equivalent to any fibre of that fibration, and hence weakly equivalent to the subspace of $\mathcal{M}_{g}$ which consists of all sub-manifolds $W \subset [0, \infty)\times\R^{\infty}$ containing the image of the embeddings $\phi_{0}, \dots, \phi_{p-1}$. If we let $A \subset S_{p}$ denote the union of these images and a collar neighborhood of $\{0\}\times S^{2n}$, then the inclusion $A \hookrightarrow S_{p}$ is an isotopy equivalence. It follows that $X_{p-1}$ is weakly equivalent to the subspace of $\mathcal{M}_{g}$ consisting of all sub-manifolds containing $S_{p}$, but by \ref{cor: cancellation}, this space is in turn equivalent to $\mathcal{M}_{g-p}$. It is then easy to see that 
\end{comment}
\end{proof}

Recall that the stabilization map $s_{g-1}: \mathcal{M}_{g-1} \rightarrow \mathcal{M}_{g}$ was defined by a submanifold of $[0,1]\times\R^{\infty}$ diffeomorphic to $S^{n}\times S^{n+1}$ with the interior of two disks cut out. The space of such sub-manifolds is path connected and so the homotopy class of this stabilization map does not depend on the choice of sub-manifold of $[0,1]\times\R^{\infty}$. To be precise, we use the the submanifold $S_{1} \subset [0,1]\times \R^{\infty}$ to define our stabilization map. Together with Proposition \ref{prop: stabilizer}, our next result says that the last face map of $X_{\bullet}$ is a model for the stabilization map. 

\begin{proposition} \label{prop: stabilization diff}  The following diagram is commutative for $p \geq 0$
$$\xymatrix{
\mathcal{M}_{g-p-1} \ar[rr]^{s_{g-p-1}} \ar[d] && \mathcal{M}_{g-p} \ar[d] \\
X_{p} \ar[rr]^{d_{p}} && X_{p-1}}$$
where the vertical maps are the given by $(\ref{eq: stabilizer})$ and the top horizontal map is the stabilization map from $(\ref{eq: stab map})$. \end{proposition}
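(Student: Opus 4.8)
The plan is to unwind both composite maps $\mathcal{M}_{g-p-1} \to X_{p-1}$ explicitly and check that they agree on submanifolds, up to the choice of auxiliary data (the embeddings $\phi_i$ and reals $t_i$), which lives in a contractible space and therefore does not affect the homotopy class. First I would recall the two routes around the square. Going right then down: start with $W \in \mathcal{M}_{g-p-1}$, apply $s_{g-p-1}$ to obtain $\theta(W_{1,2}) \cup (W + e_1) \in \mathcal{M}_{g-p}$ (equivalently, concatenate one copy of $S_1$, using the model $S_1 \subset [0,1]\times\R^\infty$ chosen above to present the stabilization map), and then apply (5.\ref*{eq: stabilizer}) for the index $g-p$, which concatenates the standard block $S_p$ and records the standard simplex $((\phi_0,t_0),\dots,(\phi_{p-1},t_{p-1}))$ living in $S_p$. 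Going down then right: apply (5.\ref*{eq: stabilizer}) for the index $g-p-1$, which concatenates $S_{p+1} = S_1 \cup T_1(S_1)\cup\cdots\cup T_p(S_1)$ together with the $(p+1)$-simplex $((\phi_0,t_0),\dots,(\phi_p,t_p))$; then apply the last face map $d_p$, which deletes the entry $(\phi_p, t_p)$.

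The key step is the observation that $S_{p+1}$ is by construction the concatenation $S_p \cup T_p(S_1)$, and that under the translation identifications the underlying submanifold produced by "down then right then $d_p$" is
$$S_{p+1} \cup ((p+1)e_1 + W) \;=\; S_p \cup T_p(S_1) \cup ((p+1)e_1 + W),$$
while "right then down" produces $S_p \cup (pe_1 + [\,S_1 \cup (e_1 + W)\,]) = S_p \cup T_p(S_1)\cup((p+1)e_1+W)$ as well, since $pe_1 + S_1 = T_p(S_1)$ and $pe_1 + (e_1 + W) = (p+1)e_1 + W$. So the two submanifolds literally coincide. On the level of the recorded simplex, "down then right then $d_p$" yields $((\phi_0,t_0),\dots,(\phi_{p-1},t_{p-1}))$ — exactly the simplex sitting in the first $p$ blocks $S_p$ — which is precisely the simplex recorded by the right-then-down composite. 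Thus both composites land on the same point of $X_{p-1}$, provided the handle-embeddings $\phi_0,\dots,\phi_{p-1}$ used in the two constructions are the same; since all such extensions of the canonical $W_{1,1}\hookrightarrow S_1$ to embeddings of $H$ are isotopic (rel the boundary collar), and the space of admissible choices of $(t_0,\dots,t_{p-1})$ is convex, the resulting maps $\mathcal{M}_{g-p-1}\to X_{p-1}$ are canonically homotopic, which is all that is claimed since (5.\ref*{eq: stab map}) and (5.\ref*{eq: stabilizer}) are only defined up to homotopy.

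I expect the main (and only real) obstacle to be bookkeeping: carefully matching the normalizations of the translations $T_j$, the collar conventions on $S_1$, and the indexing of the blocks so that $S_{p+1} = S_p \cup T_p(S_1)$ holds on the nose and so that the simplex recorded by (5.\ref*{eq: stabilizer}) at index $g-p-1$ restricts, after $d_p$, to exactly the simplex recorded at index $g-p$. Once the conventions are pinned down this is a direct verification; there is no geometric input beyond what was already used to set up $X_\bullet$, which is why it suffices to quote \cite[Proposition 5.4]{GRW 12} for the structure and simply note that the same argument applies verbatim. I would therefore present the proof as: (i) unwind both composites as above, (ii) observe the equality of underlying submanifolds via $pe_1 + S_1 = T_p(S_1)$, (iii) observe the equality of recorded simplices after $d_p$, and (iv) invoke contractibility of the space of auxiliary choices to conclude the two maps are homotopic, hence the square commutes up to homotopy.
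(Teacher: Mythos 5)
Your proposal is correct and follows essentially the same route as the paper: unwind both composites, observe that $S_{p}\cup(pe_{1}+(S_{1}\cup(e_{1}+W))) = S_{p+1}\cup((p+1)e_{1}+W)$ so the underlying submanifolds agree, and note that $d_{p}$ simply forgets $(\phi_{p},t_{p})$, leaving the same simplex $((\phi_{0},t_{0}),\dots,(\phi_{p-1},t_{p-1}))$. The only cosmetic difference is that the paper pins down the choices of $(\phi_{i},t_{i})$ compatibly via the inclusion $S_{p}\hookrightarrow S_{p+1}$ so the square commutes on the nose, whereas you appeal to contractibility of the space of choices to get commutativity up to homotopy, which suffices.
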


\begin{proof} Starting with $W \in \mathcal{M}_{g-p-1}$, we have $s_{g-p-1}(W) = S_{1} \cup (e_{1} + W) \in \mathcal{M}_{g-p}$. The image of the $S_{1} \cup (e_{1} + W)$ under the right-vertical map is 
$$S_{p}\cup(pe_{1} + (S_{1} \cup (e_{1} + W))) \; = \; S_{p+1}\cup ((p+1)e_{1} + W) \subset [0, p+1]\times\R^{\infty}$$
equipped with the embeddings $(\phi_{0}, \dots, \phi_{p-1})$. If instead we map $W$ down to $W_{p}$, we get the element with the same underlying manifold but equipped with the embeddings $(\phi_{0}, \dots, \phi_{p})$, and the face map $d_{p}: X_{p} \rightarrow X_{p-1}$ then forgets the embedding $\phi_{p}$.  
\end{proof}

Finally, we show that all face maps $d_{i}: X_{p} \rightarrow X_{p-1}$ are homotopic, $i = 1, \dots, p$. For this, we need to be more precise about our choices of 
$((\phi_{0}, t_{0}), \dots, (\phi_{p-1}, t_{p})) \in \bar{K}_{p-1}(S_{p})$
used in the map from (\ref{eq: stabilizer}). First, the inclusion $S_{p} \hookrightarrow S_{p+1}$ induces a map $\bar{K}_{\bullet}(S_{p}) \rightarrow \bar{K}_{\bullet}(S_{p+1})$ which we may assume sends the $(\phi_{i}, t_{i})  \in \bar{K}_{0}(S_{p})$ to the vertices in $\bar{K}_{0}(S_{p+1})$ with the same names. By the proof of Corollary \ref{cor: transitivity}, we may pick a diffeomorphism $\psi: S_{2} \stackrel{\cong} \longrightarrow S_{2}$ which interchanges the two canonical embeddings $W_{1,1} \hookrightarrow S_{2}$ while fixing pointwise the image  
$a([0,1)\times \R^{2n}) \subset S_{2}$, and a neighborhood of the boundary $\partial S_{2}$ (recall $a$ was the coordinate patch used in the definition of $\bar{K}_{\bullet}(S_{2})$).   
We now pick $(\phi_{1}, t_{1}) \in \bar{K}_{0}(S_{2})$ so that $\phi_{1}$ is in the same path component as $\psi\circ \phi_{0}$. More generally for $p \geq 2$ and $1 \leq i < p$, we let $\psi_{(i-1, i)}$ be the diffeomorphism of $S_{p} \subset [0,p]\times \R^{\infty}$ which acts as $\psi$ inside $S_{p}\cap ([i-1, i+1]\times\R^{\infty})$ and is the identity outside. We may then inductively pick $(\phi_{p}, t_{p}) \in \bar{K}_{0}(S_{p+1})$ so that $\phi_{p}$ is in the same path component as $\psi_{(p-1, p)}\circ \phi_{p-1}$ and such that $\phi_{p}$ is disjoint from the images of $\phi_{i}$ and the support of $\psi_{(i-1, i)}$ for $i < p$.

\begin{proposition} \label{prop: differentials} For $0 \leq p \leq g-3$, all face maps $d_{i}: X_{p} \rightarrow X_{p-1}$ are weakly homotopic to each other. \end{proposition}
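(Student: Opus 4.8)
The plan is to follow the proof of the analogous statement in \cite[Section 5]{GRW 12}. The two face maps $d_{i-1}$ and $d_{i}$ differ only by a ``swap'' of two adjacent embedded handles, and although interchanging two handles need not be isotopic to the identity as a self-diffeomorphism of $W_{g,1}$, it is isotopic to the identity through embeddings into the ambient space $[0,\infty)\times\R^{\infty}$, and that is all we need. Since weak homotopy of maps is an equivalence relation, it suffices to prove that $d_{i-1}$ and $d_{i}$ are weakly homotopic for each $1\le i\le p$ (for $p=0$ there is nothing to prove). By Proposition \ref{prop: stabilizer}, for $g-p\ge 3$ the map (\ref{eq: stabilizer}) is a weak equivalence $\Phi_{p-1}\colon\mathcal{M}_{g-p}\longrightarrow X_{p-1}$, and similarly the analogous map $\Phi_{p}\colon\mathcal{M}_{g-p-1}\longrightarrow X_{p}$ is a weak equivalence when $p\le g-4$; in that range it therefore suffices to show that $d_{i-1}\circ\Phi_{p}$ and $d_{i}\circ\Phi_{p}$ agree up to weak homotopy as maps $\mathcal{M}_{g-p-1}\longrightarrow X_{p-1}$. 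The remaining case $p=g-3$ is handled in the same manner, cf.\ \cite{GRW 12}.

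For $W\in\mathcal{M}_{g-p-1}$ write $\widehat{W}:=S_{p+1}\cup\bigl((p+1)e_{1}+W\bigr)\in\mathcal{M}_{g}$; this manifold underlies both $d_{i-1}\Phi_{p}(W)$ and $d_{i}\Phi_{p}(W)$, and the two tuples of embedded handles that these points record coincide in every slot but one, which carries $\phi_{i}$ for $d_{i-1}\Phi_{p}(W)$ and $\phi_{i-1}$ for $d_{i}\Phi_{p}(W)$. Recall the diffeomorphism $\psi_{(i-1,i)}$ of $\widehat{W}$ (extended by the identity over $(p+1)e_{1}+W$) chosen in the paragraph preceding the statement: it is supported away from the coordinate patch $a$, a collar of $\partial\widehat{W}$, the summand $(p+1)e_{1}+W$, and every handle $\phi_{j}$ with $j\notin\{i-1,i\}$, and by the inductive choice of the $\phi_{\bullet}$ it carries $\phi_{i}$ into the path component of $\phi_{i-1}$. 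Let $\Psi_{i}$ denote the operation of applying $\psi_{(i-1,i)}$ to the underlying manifold and to each embedded handle; it is defined at least on the image of $d_{i-1}\circ\Phi_{p}$, and $\Psi_{i}\bigl(d_{i-1}\Phi_{p}(W)\bigr)$ has underlying manifold $\widehat{W}$, agrees with $d_{i}\Phi_{p}(W)$ in all but the distinguished slot, and in that slot carries $\psi_{(i-1,i)}\circ\phi_{i}$. Choosing a path from $\psi_{(i-1,i)}\circ\phi_{i}$ to $\phi_{i-1}$ inside their common path component, continuously in $W$, exhibits a homotopy $\Psi_{i}\circ d_{i-1}\Phi_{p}\simeq d_{i}\Phi_{p}$.

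It remains to show $\Psi_{i}\circ d_{i-1}\Phi_{p}\simeq d_{i-1}\Phi_{p}$, which is the only genuinely geometric step and the crux of the argument. By construction (cf.\ the proof of Corollary \ref{cor: transitivity}) $\psi_{(i-1,i)}$ is given on the relevant cylinder by $(t,x)\mapsto(t,\gamma(t)\cdot x)$, where $\gamma$ is a path in $SO(2n+1)$ supported in a compact interval in the first coordinate, equal to the identity near the boundary, and interchanging the two handles at the midpoint of its support; we choose $\gamma$, as we may, so that it is null-homotopic as a based loop in $SO(2n+1)$. The same formula, applied to the coordinate copy of $\R^{2n+1}\subset\R^{\infty}$, defines an ambient isotopy $\Theta_{s}$ of $[0,\infty)\times\R^{\infty}$ with $\Theta_{0}=\mathrm{id}$, with each $\Theta_{s}$ the identity near the boundary and outside a compact range in the first coordinate, and with $\Theta_{1}$ restricting to $\psi_{(i-1,i)}$ on $S_{p+1}$ and to the identity on $(p+1)e_{1}+W$; pushing $d_{i-1}\Phi_{p}(W)$ along $\Theta_{s}$ produces a path in $X_{p-1}$ from $d_{i-1}\Phi_{p}(W)$ to $\Psi_{i}\bigl(d_{i-1}\Phi_{p}(W)\bigr)$, and since $\Theta_{s}$ is independent of $W$ these paths assemble to the desired homotopy. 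Combining, $d_{i}\Phi_{p}\simeq\Psi_{i}\circ d_{i-1}\Phi_{p}\simeq d_{i-1}\Phi_{p}$, which proves the proposition. The hard part is precisely the construction of $\gamma$: one must arrange simultaneously that $\psi_{(i-1,i)}$ genuinely interchange the two handles, that it be the identity near the boundary, on the coordinate patch, on the other handles, and over the $W$-summand, and that the associated loop in $SO(2n+1)$ be trivial, so that the swap --- not isotopic to the identity intrinsically --- becomes isotopic to the identity ambiently; once this is set up, naturality of the ambient isotopy takes care of the rest.
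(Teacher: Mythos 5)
Your overall reduction is the same as the paper's: precompose with the weak equivalences of Proposition \ref{prop: stabilizer}, observe that $d_{i-1}\circ\Phi_{p}$ and $d_{i}\circ\Phi_{p}$ differ only by applying the swap $\psi_{(i-1,i)}$ and by which path component of $\bar{K}_{0}$ the distinguished handle lies in, and then show that applying $\psi_{(i-1,i)}$ ambiently does not change the weak homotopy class. Where you diverge is in that last, crucial step, and your version of it has a concrete problem. You assert that $\psi_{(i-1,i)}$ is given on the cylinder by $(t,x)\mapsto(t,\gamma(t)\cdot x)$ with $\gamma$ a path in $SO(2n+1)$; but in the construction of $S_{p}$ the two adjacent handles occupy disjoint ranges of the \emph{first} coordinate (they live in $T_{i-1}(S_{1})$ and $T_{i}(S_{1})$), and a map of this form preserves every slice $\{t\}\times\R^{\infty}$, so it cannot interchange them. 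The rotation-by-$\Gamma$ picture from the proof of Corollary \ref{cor: transitivity} applies to the model $M_{2,2}$, where both handles sit over $t=\tfrac12$ at antipodal points of $S^{2n}$; transporting $\psi$ to $S_{2}$ requires conjugating by an identification that destroys the product form, so the explicit ambient isotopy $\Theta_{s}$ you build from a null-homotopy of $\gamma$ does not exist as described. The further requirements you impose (that $\gamma$ be null-homotopic in $SO(2n+1)$, that the handles be embedded equivariantly so that $\Theta_{1}$ restricts to $\psi_{(i-1,i)}$ on them) are symptoms of trying to write the isotopy down by hand.

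The paper avoids all of this with a soft argument that you should adopt: the space of embeddings $q\colon S_{2}\to[0,2]\times\R^{\infty}$ with $q|_{\partial S_{2}}=i|_{\partial S_{2}}$ is path-connected (indeed weakly contractible) because the target is infinite-dimensional, so there is \emph{some} isotopy $h_{t}$ rel boundary from the inclusion $i$ to $i\circ\psi$, and the formula $W\mapsto(h_{t}(S_{2})\cup(2e_{1}+W),(h_{t}\circ\phi_{1},t_{1}))$ already gives the homotopy from $f_{1}$ to $W\mapsto(S_{2}\cup(2e_{1}+W),(\psi\circ\phi_{1},t_{1}))$, which is then connected to $f_{0}$ by the path-component condition on the $\phi_{j}$. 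No ambient isotopy, no condition on $\pi_{1}(SO(2n+1))$, and no equivariance of the handle embeddings is needed. So the step you identify as ``the hard part'' is in fact the easy part once one uses the infinite-dimensionality of $\R^{\infty}$; as written, your construction of $\Theta_{s}$ is the gap.
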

 \begin{proof} This is the same as the proof of \cite[Proposition 5.5]{GRW 12}. We focus on the case $p=1$, the general case is similar. For $i = 0,1$, we denote by $f_{i}$ the composition of $d_{i}: X_{1} \rightarrow X_{0}$ with the weak equivalence $\mathcal{M}_{g-2} \rightarrow X_{1}$ from Proposition \ref{prop: stabilizer}. We shall construct a homotopy $f_{0}\sim f_{1}: \mathcal{M}_{g-2}\rightarrow X_{0}$. These maps are given by the formula 
 $$f_{i}(W) \; = \; (S_{2}\cup(2e_{1} + W), (\phi_{i}, t_{i})).$$
 The composition of the inclusion $i: S_{2} \hookrightarrow [0,2]\times\R^{\infty}$ with the diffeomorphism $\psi: S_{2} \stackrel{\cong}\longrightarrow S_{2}$ is an embedding which agrees with $i$ near $\partial S_{2}$. The space of all embeddings $q: S_{2} \rightarrow [0,2]\times\R^{\infty}$ which satisfy $q|_{\partial S_{2}} = i|_{\partial S_{2}}$ is path connected. We therefore may choose an isotopy 
 $$h_{t}: S_{2} \longrightarrow [0,2]\times\R^{\infty}$$
 from $i$ to $i\circ\psi$, which restricts to the constant isotopy of embeddings on a neighborhood of $\partial S_{2}$. The formula
 $$W \; \mapsto \; (h_{t}(S_{2})\cup(2e_{1} + W), (h_{t}\circ\phi_{1}, t_{1}))$$
 gives a homotopy of maps $\mathcal{M}_{g-2}\rightarrow X_{0}$ which starts at $f_{1}$ and ends at the map $h_{1}: W \mapsto (S_{2}\cup(2e_{1} + W), (\psi\circ\phi_{1}, t_{1}))$. Since $\psi\circ \phi_{1}$ is in the same path component as $\phi_{0}$, the map $h_{1}$ is homotopic to $f_{0}$. 
  \begin{comment} This is the same as \cite[Proposition 5.5]{GRW 12}. 
 \end{comment}
 \end{proof}

\subsection{The Spectral Sequence and Homological Stability} \label{The Spectral Sequence and Homological Stability}
We now prove Theorem \ref{thm: Main Theorem} by induction. We consider the spectral sequence induced by the augmented semi-simplicial space $X_{\bullet}$, with $E^{1}$ term given by $E^{1}_{p,q} \; = \; H_{q}(X_{p})$ for $p \geq -1$ and $q \geq 0$. The differential is given by $d^{1} = \sum(-1)^{i}(d_{i})_{*}$, and the group $E^{\infty}_{p,q}$ is a sub-quotient of the relative homology $H_{p+q+1}(X_{-1}, |X_{\bullet}|)$. The proofs in this section are exactly the same as \cite[Section 5.2]{GRW 12}. We repeat them here for the sake of completeness. 

\begin{lemma} \label{lemma: differentials} We have isomorphisms $E^{1}_{p,q} \cong H_{q}(\mathcal{M}_{g-p-1})$ for $ -1 \leq p \leq g-3$, with respect to which the differential 
$$H_{q}(\mathcal{M}_{g-p-1}) \cong E^{1}_{p,q} \stackrel{d^{1}} \longrightarrow E^{1}_{p,q} \cong H_{q}(\mathcal{M}_{g-p})$$
agrees with the stabilization map for $p$ even, and is zero otherwise. Furthermore, $E^{\infty}_{p,q} = 0$ for $p+q \leq \frac{1}{2}(g-3)$. 
\end{lemma}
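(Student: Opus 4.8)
The plan is to assemble the three propositions just established---Proposition \ref{prop: stabilizer}, Proposition \ref{prop: stabilization diff} and Proposition \ref{prop: differentials}---into the standard spectral sequence comparison argument, exactly as in \cite[Section 5.2]{GRW 12}. I would begin by fixing the identification of the $E^1$-page. For $p=-1$ this is the tautology $E^1_{-1,q}=H_q(X_{-1})=H_q(\mathcal{M}_g)$. For $0\le p\le g-3$, Proposition \ref{prop: stabilizer} (applied with its index shifted up by one, that is, to the map $(\ref{eq: stabilizer})$ with $p$ replaced by $p+1$) gives a weak homotopy equivalence $\mathcal{M}_{g-p-1}\xrightarrow{\ \sim\ }X_p$, hence an isomorphism $E^1_{p,q}=H_q(X_p)\cong H_q(\mathcal{M}_{g-p-1})$. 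I would fix these isomorphisms once and for all; with respect to them the target $E^1_{p-1,q}$ of $d^1$ is identified with $H_q(\mathcal{M}_{g-p})$, as in the statement.

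Next I would compute the differential $d^1\colon E^1_{p,q}\to E^1_{p-1,q}$. By construction $d^1=\sum_{i=0}^{p}(-1)^i(d_i)_*$, where $d_i\colon X_p\to X_{p-1}$ are the face maps of $X_\bullet$. Proposition \ref{prop: differentials} asserts that for $0\le p\le g-3$ \emph{all} of the face maps $d_0,\dots,d_p$ are weakly homotopic to one another, so they induce a single map on homology; therefore $d^1=\big(\sum_{i=0}^{p}(-1)^i\big)(d_p)_*$, which is $(d_p)_*$ for $p$ even and $0$ for $p$ odd. Then Proposition \ref{prop: stabilization diff} supplies a commuting square relating $d_p\colon X_p\to X_{p-1}$ to the stabilization map $s_{g-p-1}\colon\mathcal{M}_{g-p-1}\to\mathcal{M}_{g-p}$ via the equivalences of the previous paragraph; passing to homology, $(d_p)_*$ is carried to $(s_{g-p-1})_*$. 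Combining these two observations, under the chosen identifications $d^1$ agrees with $(s_{g-p-1})_*$ when $p$ is even and vanishes when $p$ is odd.

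It remains to see that $E^\infty_{p,q}=0$ for $p+q\le\tfrac12(g-3)$. As recorded in the set-up above, $E^\infty_{p,q}$ is a subquotient of the relative homology $H_{p+q+1}(X_{-1},|X_\bullet|)=H_{p+q+1}(\mathcal{M}_g,|X_\bullet|)$. The augmentation $|X_\bullet|\to\mathcal{M}_g$ is a locally trivial fibre bundle whose fibre $|\bar K_\bullet(W_{g,1})|$ is $\tfrac12(g-3)$-connected by Corollary \ref{connectivity of K- bar}; the long exact homotopy sequence of this fibration then shows the augmentation is $\big(\tfrac12(g-3)+1\big)$-connected, so $H_j(\mathcal{M}_g,|X_\bullet|)=0$ for $j\le\tfrac12(g-3)+1$. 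Hence $E^\infty_{p,q}=0$ whenever $p+q+1\le\tfrac12(g-3)+1$, that is, whenever $p+q\le\tfrac12(g-3)$, which completes the argument.

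I do not expect any genuine obstacle here: everything is formal once the geometric input of Section \ref{High connectivity} and the three propositions above are in hand. The points needing care are purely bookkeeping---the precise range of $p$ for which the $E^1$-identification via Proposition \ref{prop: stabilizer} holds, the collapse of the alternating sum of face maps (which truly requires that \emph{every} $d_i$ be homotopic to $d_p$, not just $d_0$), and retaining the extra ``$+1$'' in the connectivity of the augmentation so that one arrives at the bound $p+q\le\tfrac12(g-3)$ rather than the weaker $p+q\le\tfrac12(g-3)-1$.
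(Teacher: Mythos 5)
Your proposal is correct and follows essentially the same route as the paper: identify $E^1_{p,q}$ via Proposition \ref{prop: stabilizer}, collapse the alternating sum of face maps using Propositions \ref{prop: differentials} and \ref{prop: stabilization diff}, and kill $E^\infty_{p,q}$ in the stated range via the connectivity of the augmentation. Your treatment of the final step is in fact slightly more careful than the paper's, since you extract the extra ``$+1$'' of connectivity of the map $|X_\bullet|\to\mathcal{M}_g$ from the $\tfrac12(g-3)$-connectivity of its fibre, which is what actually yields the bound $p+q\le\tfrac12(g-3)$ as claimed.
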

\begin{proof} Proposition \ref{prop: stabilizer} identifies $E^{1}_{p,q} = H_{q}(X_{p}) \cong H_{q}(\mathcal{M}_{g-p-1})$ and Proposition \ref{prop: differentials} shows that all maps $(d_{i})_{*}: H_{q}(X_{p}) \longrightarrow H_{q}(X_{p-1})$ are equal, $i = 0, \dots, p$. Therefore all terms in the differential $d^{1} = \sum(-1)^{i}(d_{i})_{*}$ cancel for $p$ odd, and for $p$ even $(d_{p})_{*}$ survives and by Proposition \ref{prop: stabilization diff} is identified with the stabilization map. 

The group $E^{\infty}_{p, q}$ is a subquptient of the relative homology $H_{p+q+1}(X_{-1}, |X_{\bullet}|)$, but this vanishes for $p+q+1 \leq \frac{1}{2}(g-3)$ since the map $|X_{\bullet}| \longrightarrow X_{-1}$ is $\frac{1}{2}(g-3)$-connected. 
\end{proof}

\begin{proof}[Proof of Theorem \ref{thm: Main Theorem}] Let us write $a = \frac{1}{2}(g-3)$. We will use the spectral sequence above to prove that $H_{q}(\mathcal{M}_{g-1}) \longrightarrow H_{q}(\mathcal{M}_{g})$ is an isomorphism for $q \leq a$, assuming that we know inductively that for $j > 0$ the stabilization maps $H_{q}(\mathcal{M}_{g-2j-1}) \longrightarrow H_{q}(\mathcal{M}_{g- 2j})$ are isomorphisms for $q \leq a - j$. By Lemma \ref{lemma: differentials}, this implies that the differential $d^{1}: E^{1}_{2j, q} \longrightarrow E^{1}_{2j-1, q}$ is an isomorphism for $0 < p \leq 2(a - q)$. In particular, the $E^{2}_{p, q}$-term vanishes in the region given by $p \geq 1, q \leq a -1$ and $p + q \leq a + 1$, and thus for $r \geq 2$ and $q \leq a$ it follows that differentials into $E^{r}_{-1, q}$ and $E^{r}_{0, q}$ vanish. We deduce that for $q \leq a$ we have 
$$\begin{aligned}
E^{\infty}_{0, q} = E^{2}_{0, q} &= \Ker(H_{q}(\mathcal{M}_{g-1}) \rightarrow H_{q}(\mathcal{M}_{g}))\\
E^{\infty}_{-1, q} = E^{2}_{-1, q} & = \Coker(H_{q}(\mathcal{M}_{g-1}) \rightarrow H_{q}(\mathcal{M}_{g})),
\end{aligned}$$
and since the group $E^{\infty}_{p, q}$ vanishes for $p + q \leq a$ we
see that the stabilization map $$H_{q}(\mathcal{M}_{g-1}) \rightarrow
H_{q}(\mathcal{M}_{g})$$ has vanishing kernel and cokernel for $q \leq
a$, establishing the induction step. The statement is vacuous for $g =
1$ and $g = 2$, which starts the induction.
\end{proof}

%%%%%%%%%%%%%%%%%%%%%%%%%%%%%%%%%%%%%%%%%%%%%%%%%%%%%%%%%%%%%%%%%%%%%%%%%%%%%%%%%%%%%%%

  \end{document}